\documentclass{article}

\usepackage[a4paper]{geometry}
\usepackage{amsmath}
\usepackage{amssymb}
\usepackage{amsthm}
\usepackage{multirow}
\usepackage{enumerate}
\usepackage{longtable}
\usepackage{caption2}
\usepackage{cleveref}

\allowdisplaybreaks[1]
\newcommand{\ra}[1]{\renewcommand{\arraystretch}{#1}}

\newtheorem{theorem}{Theorem}
\newtheorem{corollary}[theorem]{Corollary}
\newtheorem{lemma}[theorem]{Lemma}
\newtheorem{proposition}[theorem]{Proposition}
\newtheorem{definition}[theorem]{Definition}
\newtheorem{conjecture}{Conjecture}
\newtheorem{question}{Question}

\theoremstyle{definition}
\newtheorem{example}[theorem]{Example}
\newtheorem{remark}[theorem]{Remark}

\numberwithin{theorem}{section}
\numberwithin{equation}{section}
\numberwithin{table}{section}

\newcommand{\bb}{\mathbb}
\newcommand{\Z}{\bb{Z}}

\newcommand{\ZN}{\Z_N}

\newcommand{\Zp}{\Z_p}
\newcommand{\Q}{\bb{Q}}
\newcommand{\R}{\bb{R}}
\newcommand{\C}{\bb{C}}

\newcommand{\cC}{\mathcal{C}}

\newcommand{\cP}{\mathcal{P}}
\newcommand{\cQ}{\mathcal{Q}}

\newcommand{\cT}{\mathcal{T}}

\newcommand{\al}{\alpha}

\newcommand{\be}{\beta}
\newcommand{\de}{\delta}
\newcommand{\De}{\Delta}

\newcommand{\Ga}{\Gamma}
\newcommand{\la}{\lambda}
\newcommand{\La}{\Lambda}

\newcommand{\Sig}{\Sigma}
\newcommand{\sig}{\sigma}
\newcommand{\ze}{\zeta}
\newcommand{\zp}{\zeta_{p}}
\newcommand{\zpt}{\zeta_{p^t}}
\newcommand{\zn}{\zeta_{n}}

\newcommand{\pr}{\prime}

\newcommand{\sm}{\setminus}
\newcommand{\es}{\emptyset}

\newcommand{\ol}{\overline}
\newcommand{\lan}{\langle}
\newcommand{\ran}{\rangle}

\newcommand{\wti}{\widetilde}
\newcommand{\wh}{\widehat}
\newcommand{\lf}{\lfloor}
\newcommand{\rf}{\rfloor}

\newcommand{\Aut}{\text{Aut}}
\newcommand{\Gal}{\text{Gal}}
\newcommand{\GR}{\text{GR}}

\newcommand{\orb}{\text{orb}}

\newcommand{\Span}{\text{span}}

\newcommand{\Tr}{\text{Tr}}

\newcommand{\Fpm}{\mathbb{F}_{p^m}}


\newcommand{\zl}{\zeta_l}
\newcommand{\Zl}{\Z_l}

\newcommand{\AGL}{\text{AGL}}

\newcommand{\covol}{\text{covol}}
\newcommand{\vol}{\text{vol}}



\long\def\symbolfootnote[#1]#2{\begingroup%
\def\thefootnote{\fnsymbol{footnote}}\footnote[#1]{#2}\endgroup}

\begin{document}

\title{Formal Duality in Finite Abelian Groups}
\author{Shuxing Li \and Alexander Pott \and Robert Sch\"{u}ler}
\date{}
\maketitle

\symbolfootnote[0]{
S.~Li and A.~Pott are with the Faculty of Mathematics, Otto von Guericke University Magdeburg, 39106 Magdeburg , Germany (e-mail: shuxing.li@ovgu.de, alexander.pott@ovgu.de).
\par
R.~Sch\"{u}ler is the with Institute of Mathematics, University of Rostock, 18051 Rostock, Germany (e-mail: robert.schueler2@uni-rostock.de).
}

\begin{abstract}
Inspired by an experimental study of energy-minimizing periodic configurations in Euclidean space, Cohn, Kumar and Sch\"urmann proposed the concept of formal duality between a pair of periodic configurations, which indicates an unexpected symmetry possessed by the energy-minimizing periodic configurations. Later on, Cohn, Kumar, Reiher and Sch\"urmann translated the formal duality between a pair of periodic configurations into the formal duality of a pair of subsets in a finite abelian group. This insight suggests to study the combinatorial counterpart of formal duality, which is a configuration named formally dual pair. In this paper, we initiate a systematic investigation on formally dual pairs in finite abelian groups, which involves basic concepts, constructions, characterizations and nonexistence results. In contrast to the belief that primitive formally dual pairs are very rare in cyclic groups, we construct three families of primitive formally dual pairs in noncyclic groups. These constructions enlighten us to propose the concept of even sets, which reveals more structural information about formally dual pairs and leads to a characterization of rank three primitive formally dual pairs. Finally, we derive some nonexistence results about primitive formally dual pairs, which are in favor of the main conjecture that except two small examples, no primitive formally dual pair exists in cyclic groups.

\smallskip
\noindent \textbf{Keywords.} Character sum, finite abelian group, energy minimization, even set, formal duality, formally dual pair, formally dual set, lattice, periodic configuration, relative difference set, skew Hadamard difference set.

\noindent {{\bf Mathematics Subject Classification\/}: 05B40, 52C17.}
\end{abstract}

\section{Energy-minimizing configuration and formal duality}

Let $\cC$ be a particle configuration in the Euclidean space $\R^n$ and $f: \R^n \rightarrow \R$ be a potential function. The energy minimization problem, with respect to the potential function $f$, aims to find configurations $\cC \subset \R^n$ with a fixed density, which possesses the minimal energy measured by $f$. The famous sphere packing problem can be viewed as an extremal case of the energy minimization problem \cite[p. 123]{CKRS}. In general, the energy minimization problem is extremely difficult and very few theoretical results are available.

In \cite{CKRS,CKS}, the authors considered a special case of the energy minimization problem, in which the particle configurations under consideration are restricted to the so called periodic configurations. Let $\La$ be a lattice in $\R^n$, which is a discrete subgroup of $\R^n$. A \emph{periodic configuration} $\cP=\bigcup_{j=1}^N (v_j+\La)$ is a union of finitely many translations $v_1, v_2, \ldots, v_N$ of the lattice $\La$. The \emph{density} of $\cP$ is defined to be $\de(\cP)=N/\covol(\La)$, where $\covol(\La)=\vol(\R^n/\La)$ is the volume of a fundamental domain of $\La$. In \cite{CKS}, the authors conducted an experimental study of energy-minimizing periodic configurations, with respect to the Gaussian potential function $G_c(r)=e^{-\pi cr^2}$, for some $c>0$. The numerical simulations in \cite{CKS} concerned a pair of Gaussian potential functions $G_c$ and $G_{1/c}$, for which a pair of suspected energy-minimizing periodic configurations was obtained. Surprisingly, this pair of periodic configurations possesses a strong symmetry named formal duality \cite[Section VI]{CKS}, in the sense that they behave like the dual of each other.

Note that up to a scalar multiple, $G_{1/c}$ is equal to the Fourier transformation of $G_{c}$. Thus, formal duality can be stated with respect to a well-behaved potential function $f: \R^n \rightarrow \R$ and its Fourier transformation
\begin{equation}\label{def-Fourier}
\wh{f}(y)=\int_{\R^n}f(x)e^{-2\pi i\lan x,y \ran}dx,
\end{equation}
where $\lan \cdot, \cdot \ran$ is the inner product in $\R^n$. The function $f$ is well-behaved in the sense that the convergence of its Fourier transformation \eqref{def-Fourier} is guaranteed. For instance, we may assume that $f$ is a Schwartz function. Roughly speaking, for a Schwartz function $f: \R^n \rightarrow \R$, the absolute value $|f(x)|$ decreases rapidly enough to $0$ when $|x|$ goes to infinity, so that its Fourier transformation $\wh{f}$ is well-defined. For a Schwartz function $f: \R^n \rightarrow \R$ and a periodic configuration $\cP=\bigcup_{j=1}^N(v_j+\La)$ with respect to a lattice $\La \subset \R^n$, define the \emph{average pair sum} of $f$ over $\cP$ as
$$
\Sig_f(\cP)=\frac{1}{N}\sum_{j,\ell=1}^N\sum_{x \in \La}f(x+v_j-v_\ell),
$$
which measures the average energy among the differences of pairs in $\cP$. Now we proceed to introduce the definition of formal duality.

\begin{definition}{\rm \cite[Definition 2.1]{CKRS}}
Two periodic configurations $\cP$ and $\cQ$ are formally dual to each other, if for each Schwartz function $f: \R^n \rightarrow \R$,
\begin{equation}\label{def-formaldual}
\Sig_f(\cP)=\de(\cP)\Sig_{\wh{f}}(\cQ),
\end{equation}
where $\wh{f}$ is the Fourier transformation of $f$.
\end{definition}
Let $\La \subset \R^n$ be a lattice with a basis containing $n$ vectors. Consider a special case of periodic configurations, where $\cP$ is a lattice $\La$ and $\cQ$ is the dual lattice
$$
\La^*=\{x \in \R^n \mid \lan x, y \ran \in \Z, \forall y \in \La \},
$$
in which $\lan \cdot, \cdot \ran$ is the inner product in $\R^n$. Then \eqref{def-formaldual} corresponds to the Poisson summation formula
$$
\sum_{x \in \La} f(x)=\frac{1}{\covol(\La)}\sum_{y \in \La^*}\wh{f}(y),
$$
which describes the duality between a lattice $\La$ and its dual $\La^*$. Thus, the formal duality can be viewed as an extension of the duality between a lattice and its dual.

For a periodic configuration $\cP=\bigcup_{j=1}^N(v_j+\La)$ with respect to a lattice $\La$, whether it has a formally dual or not, depends only on the lattice $\La$ and the translations $v_j$, not on how $\cP$ is embedded in $\R^n$ \cite[p. 127]{CKRS}. This crucial fact suggests that formal duality can be interpreted as a phenomenon in finite abelian groups.

Let $\cP=\bigcup_{j=1}^N(v_j+\La)$ and $\cQ=\bigcup_{j=1}^M(w_j+\Ga)$ be two periodic configurations with respect to two lattices $\La$ and $\Ga$ in $\R^n$. Without loss of generality, we assume $0 \in \cP$ and $0 \in \cQ$. If $\cP$ and $\cQ$ are formally dual to each other, then by \cite[Corollary 2.6]{CKRS}, $\cP \subset \Ga^*$ and $\cQ \subset \La^*$. Thus, $\La$ is a subgroup of $\Ga^*$ and $\Ga$ is a subgroup of $\La^*$, so that $\Ga^*/\La$ and $\La^*/\Ga$ are finite abelian groups \cite[Theorem 2.6]{F}. Furthermore, $\cP$ can be associated with a subset $S=\{v_j \mid 1 \le j \le N\}$ of size $N$ in the finite abelian group $\Ga^*/\La$ and $\cQ$ can be associated with a subset $T=\{w_j \mid 1 \le j \le M\}$ of size $M$ in the finite abelian group $\La^*/\Ga$ \cite[p. 129]{CKRS}. By identifying $\Ga^*/\La$ with a finite abelian group $G$ and $\La^*/\Ga$ with its character group $\wh{G}$, the following proposition shows that the formal duality of $\cP$ and $\cQ$ is equivalent to a property between a pair of subsets $S \subset G$ and $T \subset \wh{G}$.

\begin{proposition}\label{prop-translation}{\rm \cite[Theorem 2.8]{CKRS}}
Let $\cP=\bigcup_{j=1}^N(v_j+\La)$ and $\cQ=\bigcup_{j=1}^M(w_j+\Ga)$ be two periodic configurations, associated with a subset $S=\{v_j \mid 1 \le j \le N\} \subset G=\Ga^*/\La$ and a subset $T=\{w_j \mid 1 \le j \le M\} \subset \wh{G}=\La^*/\Ga$. Then $\cP$ and $\cQ$ are formally dual if and only if for every $y \in \wh{G}$,
\begin{equation}\label{eqn-convert}
\left|\frac{1}{N}\sum_{i=1}^N \lan v_i,y \ran\right|^2=\frac{1}{M}|\{(j,\ell) \mid 1 \le j,\ell \le M, y=w_jw_\ell^{-1}\}|,
\end{equation}
where $\lan v_i,y \ran$ denotes the evaluation of the character $y$ at the element $v_i$.
\end{proposition}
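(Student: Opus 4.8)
The plan is to unfold both sides of the defining equation \eqref{def-formaldual} with the Poisson summation formula and then match them coset by coset. Since $\Ga^*/\La$ and $\La^*/\Ga$ are finite, all four lattices $\La,\Ga,\La^*,\Ga^*$ have full rank $n$, so the Poisson summation formula in its shifted form $\sum_{x\in\La}f(x+v)=\covol(\La)^{-1}\sum_{y\in\La^*}\wh f(y)\lan v,y\ran$ is available. Substituting $v=v_j-v_\ell$ and applying this formula to each inner lattice sum in $\Sig_f(\cP)$, then interchanging the finite $(j,\ell)$-sum with the sum over $\La^*$ and using $\sum_{j,\ell}\lan v_j-v_\ell,y\ran=\bigl|\sum_{j}\lan v_j,y\ran\bigr|^2$, gives
$$
\Sig_f(\cP)=\frac{1}{N\covol(\La)}\sum_{y\in\La^*}\wh f(y)\,\Bigl|\sum_{j=1}^N\lan v_j,y\ran\Bigr|^2 .
$$

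Next I would exploit $\cP\subset\Ga^*$: for $y\in\La^*$ the factor $\bigl|\sum_j\lan v_j,y\ran\bigr|^2$ is unchanged when $y$ is shifted by $\Ga$, so it depends only on the coset $\ol y\in\La^*/\Ga=\wh G$, and $\lan v_j,\ol y\ran$ is exactly the character evaluation in the statement. Writing $F(\ol y)$ for the sum of $\wh f$ over the coset $\ol y$ (viewed as a subset of $\La^*$) and regrouping the sum over $\La^*$ into $\Ga$-cosets yields $\Sig_f(\cP)=\frac{1}{N\covol(\La)}\sum_{\ol y\in\wh G}\bigl|\sum_j\lan v_j,\ol y\ran\bigr|^2 F(\ol y)$. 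In the same spirit, the inner sum $\sum_{x\in\Ga}\wh f(x+w_j-w_\ell)$ is invariant under shifting $w_j-w_\ell$ by $\Ga$, so grouping the pairs $(j,\ell)$ according to the coset $w_jw_\ell^{-1}\in\wh G$ produces $\Sig_{\wh f}(\cQ)=\frac{1}{M}\sum_{\ol y\in\wh G}|\{(j,\ell):w_jw_\ell^{-1}=\ol y\}|\,F(\ol y)$, with the identical weights $F(\ol y)$. Inserting both expansions into \eqref{def-formaldual}, cancelling $\covol(\La)^{-1}$ and dividing by $N$, reduces formal duality to the single requirement
$$
\sum_{\ol y\in\wh G}\bigl(a(\ol y)-b(\ol y)\bigr)F(\ol y)=0\quad\text{for every Schwartz }f,
$$
where $a(\ol y)=\bigl|\tfrac1N\sum_j\lan v_j,\ol y\ran\bigr|^2$ and $b(\ol y)=\tfrac1M|\{(j,\ell):w_jw_\ell^{-1}=\ol y\}|$ are precisely the two sides of \eqref{eqn-convert}.

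It then remains to show that this weighted identity holds for all $f$ if and only if $a(\ol y)=b(\ol y)$ for each individual $\ol y$; the forward (``if'') direction is immediate. For the converse, the clean move is to recombine the coset sums into a single sum over $\La^*$: with $c(\ol y)=a(\ol y)-b(\ol y)$ and $g=\wh f$, the hypothesis becomes $\sum_{z\in\La^*}c(\ol z)\,g(z)=0$. Because $\La^*$ is discrete, for any $z_0\in\La^*$ I can choose a Schwartz $g$ that is prescribed to be nonzero at $z_0$ and to vanish at every other lattice point; this isolates $c(\ol{z_0})$ and forces it to vanish, and letting $z_0$ range over $\La^*$ gives $c\equiv0$.

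I expect the last step to be the main obstacle, since one is asserting that finitely many linear functionals $f\mapsto F(\ol y)$ are independent enough over the infinite-dimensional Schwartz space to force pointwise equality of the coefficients; each $F(\ol y)$ is itself an infinite lattice sum, so one cannot simply ``evaluate at a point,'' and collapsing everything to $\sum_{z\in\La^*}c(\ol z)g(z)$ is what makes the separation transparent. A secondary subtlety is that $f$ is required to be real, so $g=\wh f$ is Hermitian rather than arbitrary and cannot be concentrated at a single $z_0$ alone; however, both $a$ and $b$ are invariant under $\ol y\mapsto-\ol y$ (negation conjugates the character sum and transposes the pair count), so $c$ is even, and a symmetric bump concentrated at the pair $\{z_0,-z_0\}$ still isolates each value. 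Everything preceding this step is bookkeeping with Poisson summation and coset regrouping.
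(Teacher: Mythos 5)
Your argument is correct and is essentially the standard proof of this result; the paper itself gives no proof here but imports it from \cite[Theorem 2.8]{CKRS}, whose argument follows the same route (shifted Poisson summation, regrouping the $\La^*$-sum into $\Ga$-cosets so that both sides become weighted sums of the same coset functionals $F(\ol y)$, and then separating coefficients with test functions). Your handling of the one genuinely delicate point --- that $f$ real forces $\wh f$ to be merely Hermitian, so one must use the evenness of both sides of \eqref{eqn-convert} and a symmetric bump at $\{z_0,-z_0\}$ to isolate each coefficient --- is exactly right.
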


Note that $\Ga^*$ is a finitely-generated free abelian group and $\La$ is a subgroup of $\Ga^*$. By the Stacked Basis Theorem \cite[Theorem 2.6]{F}, each abelian group can be realized as a quotient between a lattice $\Ga^*$ and a proper subgroup $\La$. Hence, the above proposition shows that the formal duality between two periodic configurations can be translated into a combinatorial setting, which considers the formal duality of a pair of subsets $S \subset G$ and $T \subset \wh{G}$. A pair of subsets $S \subset G$ and $T \subset \wh{G}$ is called a formally dual pair (see Definition~\ref{def-ori}), if they satisfy \eqref{eqn-convert}. A subset $S \subset G$ is called a formally dual set, if there exists a subset $T \subset \wh{G}$, such that $S$ and $T$ form a formally dual pair. As suggested by the experimental results in \cite{CKS}, the knowledge about formally dual pairs may bring us a better understanding on energy-minimizing periodic configurations and provide potential candidates of energy-minimizing periodic configurations.

There have been some known results concerning formally dual pairs in finite cyclic groups \cite{Mali,Sch,Xia}. In this paper, we initiate a systematic investigation on formally dual pairs in finite abelian groups. In Section~\ref{sec2}, we give a detailed account of formally dual pairs and summarize some known results. We introduce an equivalent description of formally dual pairs and formally dual sets, where the latter one contain much information about corresponding formally dual pairs (Definition~\ref{def-iso}). Moreover, the concept of equivalence between two formally dual sets is proposed (Definition~\ref{def-equiv}). In Section~\ref{sec3}, we present three constructions of primitive formally dual pairs in elementary abelian groups and in products of Galois rings (Theorems~\ref{thm-RDS}, \ref{thm-GRDS} and~\ref{thm-SHDS}). Observing the close connection between two of these three constructions and relative difference sets, we propose the concept of even sets in Section~\ref{sec4}, which offers a new model for the study of formally dual sets (Definition~\ref{def-evenset}). Even sets provide some new insight on the structure of primitive formally dual sets, from which the relation between primitive formally dual sets and $(n,n,n,1)$ relative difference sets is revealed (Theorem~\ref{thm-rankthree}). Moreover, even sets provide a new viewpoint on the main conjecture concerning primitive formally dual sets in cyclic groups (Question~\ref{ques-rankthree}). In Section~\ref{sec5}, we derive some nonexistence results about primitive formally dual pairs in favor of the main conjecture (Conjecture~\ref{conj-main}). Applying all known nonexistence results, we demonstrate that there are only three open cases for primitive formally dual pairs in $\Z_N$, with $N \le 1000$ (Remark~\ref{rem-cycopen}). For every abelian group of order up to $63$, a computer search either lists all inequivalent primitive formally dual sets in it or establishes a nonexistence result (Remark~\ref{rem-search}). Section~\ref{sec6} consists of some concluding remarks.

\section{Fundamental facts about formally dual pairs}\label{sec2}

Throughout the paper, we always consider finite abelian groups. For a group $G$, its exponent $\exp(G)$ is the order of the largest cyclic subgroup contained in $G$. The automorphism group of $G$ is denoted by $\Aut(G)$. We use $1$ to denote the identity element in $G$ when the operation of $G$ is written multiplicatively and use $0$ when the operation is written additively. The specific representation of the identity of $G$ is easy to see from the context. Let $G$ be a group and $A$ a subset of $G$. For each $y \in G$, define the \emph{weight enumerator} of $A$ at $y$ as
$$
\nu_A(y)=|\{(a_1,a_2) \in A \times A \mid y=a_1a_2^{-1}\}|.
$$

We use $\Z[G]$ to denote the group ring. Given an integer $i$ and $A=\sum_{g \in G} a_gg \in \Z[G]$, we define $A^{(i)}=\sum_{g \in G}a_gg^i \in \Z[G]$. For $A \in \Z[G]$, if each coefficient of $A$ is nonnegative, we use $\{A\}$ to denote the underlying subset of $G$ corresponding to $A$ and $[A]$ the multiset corresponding to $A$. For $A \in \Z[G]$ and $g \in G$, we use $[A]_g$ to denote the coefficient of $g$ in $A$. A \emph{character} $\chi$ of $G$ is a group homomorphism from $G$ to the multiplicative group of the complex field $\mathbb{C}$. For a group $G$, we use $\wh{G}$ to denote its character group. A character $\chi \in \wh{G}$ is \emph{principal}, if $\chi(g)=1$ for each $g \in G$. A character $\chi \in \wh{G}$ is principal on a subgroup $H \leqslant G$, if $\chi(h)=1$ for each $h \in H$. Let $H$ be a subgroup of $G$, define $H^{\perp}=\{\chi \in \wh{G} \mid \chi \mbox{ principal on $H$}\}$. For $\chi \in \wh{G}$ and $A \in \Z[G]$, we use $\chi(A)$ to denote the character sum $\sum_{x \in A} \chi(x)$. For a more detailed treatment of group rings and characters, please refer to \cite{Pott95}.

According to Proposition~\ref{prop-translation}, the definition of formally dual pairs can be phrased in the following way:

\begin{definition}\label{def-ori}
Let $S$ be a subset of $G$ and $T$ be a subset of $\wh{G}$. If for each $\chi \in \wh{G}$,
$$
|\chi(S)|^2=\frac{|S|^2}{|T|}\nu_T(\chi),
$$
then $S$ and $T$ form a formally dual pair in $G$ and $\wh{G}$.
\end{definition}

\begin{remark}\label{rem-trans}
By Definition~\ref{def-ori}, the formal duality depends only on the differences generated from $S$ and $T$. Consequently, formal duality is invariant under translation. Let $S$ and $T$ be a formally dual pair in $G$ and $\wh{G}$. Suppose $S^{\pr}$ is a translation of $S$ and $T^{\pr}$ is a translation of $T$. Then $S^{\pr}$ and $T^{\pr}$ also form a formally dual pair in $G$ and $\wh{G}$.
\end{remark}

The following example says that a formally dual pair trivially exists in each pair of groups $G$ and $\wh{G}$.

\begin{example}\label{exam-subgp}
Let $H$ be a subgroup of $G$. Suppose $S=H$ and $T=H^{\perp}$. Then $S$ and $T$ form a formally dual pair in $G$ and $\wh{G}$.
\end{example}

The following proposition suggests that Example~\ref{exam-subgp} is degenerate, in the sense that it can be generated by lifting a formally dual pair in $H$ and $H^{\perp}$.

\begin{proposition}\label{prop-lifting}
Let $G$ be a group and $H \leqslant G$ be a proper subgroup. Let $\phi: \wh{G} \rightarrow \wh{H}$ be a group homomorphism, such that $\phi(\chi)=\chi\mid_H$ for each $\chi \in \wh{G}$, where $\chi\mid_H$ is the restriction of $\chi$ on $H$.
\begin{itemize}
\item[(1)] {\rm \cite[Lemma 4.1]{CKRS}} $S$ and $T$ form a formally dual pair in $H$ and $\wh{H}$ if and only if $S$ and $\phi^{-1}(T)$ form a formally dual pair in $G$ and $\wh{G}$.
\item[(2)] {\rm \cite[Lemma 4.2]{CKRS}} Let $S$ and $T$ be a formally dual pair in $G$ and $\wh{G}$. Suppose $S \subset H \leqslant G$. Then $S$ and $\phi(T)$ form a formally dual pair in $H$ and $\wh{H}$.
\end{itemize}
\end{proposition}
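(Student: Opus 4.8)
The plan is to reduce everything to two structural features of the restriction map $\phi$: it is surjective (every character of $H$ extends to a character of $G$), and its kernel is exactly $H^{\perp}$, so $\phi$ is $[G:H]$-to-one. I would record these first, together with the two elementary identities that drive the whole computation. Since in both parts we have $S \subset H$, every $\psi \in \wh{G}$ satisfies $\psi(s)=(\psi\mid_H)(s)=\phi(\psi)(s)$ for each $s \in S$, so $\psi(S)=\phi(\psi)(S)$ and in particular $|\psi(S)|^2=|\phi(\psi)(S)|^2$. The second identity is a weight-enumerator count: for every $\psi \in \wh{G}$,
$$
\nu_{\phi^{-1}(T)}(\psi)=[G:H]\,\nu_T(\phi(\psi)).
$$

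For part~(1), I would prove this count by fibering the pairs $(\tau_1,\tau_2)\in \phi^{-1}(T)\times\phi^{-1}(T)$ with $\tau_1\tau_2^{-1}=\psi$ over the pairs $(t_1,t_2)\in T\times T$ with $t_1t_2^{-1}=\phi(\psi)$, where $t_i=\phi(\tau_i)$: once $\tau_2$ is chosen in the fiber $\phi^{-1}(t_2)$ (of size $[G:H]$), the element $\tau_1=\psi\tau_2$ is forced and automatically lies in $\phi^{-1}(t_1)$, so there are exactly $[G:H]$ preimages over each admissible $(t_1,t_2)$. Combining this with $|\phi^{-1}(T)|=[G:H]\,|T|$ and the first identity, the defining relation of Definition~\ref{def-ori} for $(S,\phi^{-1}(T))$ evaluated at $\psi$ collapses to $|\phi(\psi)(S)|^2=(|S|^2/|T|)\,\nu_T(\phi(\psi))$, which is precisely the defining relation for $(S,T)$ at the character $\phi(\psi)\in\wh{H}$. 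Because $\phi$ is surjective, $\phi(\psi)$ ranges over all of $\wh{H}$ as $\psi$ ranges over $\wh{G}$, so the condition holds for all $\psi$ if and only if it holds for all $\eta\in\wh{H}$; this gives the claimed equivalence.

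For part~(2), the key observation—which I expect to be the crux—is that the hypothesis $S \subset H$ forces $T$ to be a union of cosets of $H^{\perp}$. Indeed, for $\chi \in H^{\perp}=\Ker\phi$ we have $\chi(S)=|S|$ since $S \subset H$, so formal duality yields $\nu_T(\chi)=|T|$. Rewriting $\nu_T(\chi)=|T\cap \chi^{-1}T|$, this forces $\chi T=T$ for every $\chi\in H^{\perp}$, i.e.\ $T$ is $H^{\perp}$-invariant and hence equals $\phi^{-1}(\phi(T))$. Setting $T'=\phi(T)$, we thus have $T=\phi^{-1}(T')$, and part~(1) applied to $T'$ immediately gives that $(S,\phi(T))$ is a formally dual pair in $H$ and $\wh{H}$.

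The only genuinely nontrivial point is the coset-invariance step in part~(2); everything else is a bookkeeping translation between character sums and weight enumerators on the two groups, governed entirely by the fibers of $\phi$.
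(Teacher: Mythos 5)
Your proof is correct. Note that the paper itself gives no proof of this proposition---it is quoted directly from \cite[Lemmas 4.1 and 4.2]{CKRS}---so there is no in-paper argument to compare against; your two key facts (surjectivity of the restriction map $\phi$ with kernel $H^{\perp}$ of size $[G:H]$, and the resulting fibre count $\nu_{\phi^{-1}(T)}(\psi)=[G:H]\,\nu_T(\phi(\psi))$) are exactly what is needed, and the reduction of part~(2) to part~(1) via the observation that $\nu_T(\chi)=|T|$ for all $\chi\in H^{\perp}$ forces $T=\phi^{-1}(\phi(T))$ is the standard and correct route.
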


From the perspective of Remark~\ref{rem-trans} and Proposition~\ref{prop-lifting}, a formally dual pair consisting of $S \subset G$ and $T \subset \wh{G}$ is \emph{nondegenerate} if $S$ is not contained in a coset of a proper subgroup of $G$ and $T$ is not contained in a coset of a proper subgroup of $\wh{G}$. Moreover, by Remark~\ref{rem-trans} and Proposition~\ref{prop-lifting}(1), if $S$ is contained in a coset of a proper subgroup $H$ of $\wh{G}$, then $T$ is a union of cosets of a nontrivial subgroup $H^{\perp}$ in $\wh{G}$. Consequently, we have the following concept of primitive subsets in a group.

\begin{definition}[Primitive subset]
For a subset $S$ of a group $G$, the set $S$ is a primitive subset if $S$ is not contained in a coset of a proper subgroup of $G$ and $S$ is not a union of cosets of a nontrivial subgroup in $G$.
\end{definition}

Furthermore, we proceed to introduce the concept of primitive formally dual pairs, which was proposed in \cite[p. 134]{CKRS}.

\begin{definition}[Primitive formally dual pair]\label{def-primi}
Let $S$ and $T$ be a formally dual pair in $G$ and $\wh{G}$. They form a primitive formally dual pair if $S$ is a primitive subset in $G$ and $T$ is a primitive subset in $\wh{G}$.
\end{definition}

For a formally dual pair, $S$ belongs to the group $G$ and $T$ belongs to the character group $\wh{G}$. Note that $G$ and $\wh{G}$ are isomorphic. In order to facilitate our analysis and construction, we adopt and formalize the idea proposed by Cohn et al. \cite[p. 129]{CKRS}, which identifies the two groups $G$ and $\wh{G}$. By doing so, we make both $S$ and $T$ subsets of $G$, which is favorable for our purpose. Next, we provide a rigid description of this identification.

Let $\De$ be a group isomorphism from $G$ to $\wh{G}$, such that
\begin{align*}
\De: G &\rightarrow \wh{G} \\
     y &\mapsto \chi_y
\end{align*}
for some $\chi_y \in \wh{G}$. Using this $\De$, we can identify the set $T \subset \wh{G}$ with its preimage $\De^{-1}(T) \subset G$. Consequently, the definition of formally dual pairs can be rephrased in the following way:

\begin{definition}\label{def-iso}
Let $\De$ be a group isomorphism from $G$ to $\wh{G}$, such that $\De(y)=\chi_y$ for each $y \in G$. Let $S$ and $T$ be subsets of $G$. Then $S$ and $T$ form a formally dual pair in $G$ under the isomorphism $\De$, if for each $y \in G$,
\begin{equation}\label{eqn-def}
|\chi_y(S)|^2=\frac{|S|^2}{|T|}\nu_T(y)
\end{equation}
and
\begin{equation}\label{eqn-def2}
|\chi_y(T)|^2=\frac{|T|^2}{|S|}\nu_S(y).
\end{equation}
Moreover, $S$ and $T$ form a primitive formally dual pair in $G$, if both $S$ and $T$ are primitive subsets of $G$. For $S \subset G$, if there exists a subset $T \subset G$ such that $S$ and $T$ form a (primitive) formally dual pair in $G$, then $S$ is called a (primitive) formally dual set in $G$.
\end{definition}

\begin{remark}\label{rem-def}
\quad
\begin{itemize}
\item[(1)] We note that by Lemma~\ref{lem-pri}(2) below, if one of $S$ and $T$ is a union of cosets of a nontrivial subgroup in $G$, then the other one is contained in a coset of a proper subgroup of $G$. Thus, the fact that neither of $S$ and $T$ is contained in a coset of a proper subgroup of $G$, guarantees that both $S$ and $T$ are primitive subsets in $G$.
\item[(2)] According to \cite[Remark 2.10]{CKRS}, the roles of $S$ and $T$ are interchangeable. Thus, \eqref{eqn-def} holds for each $y \in G$ if and only if \eqref{eqn-def2} holds for each $y \in G$. Moreover, by the Fourier inversion formula (see Proposition~\ref{prop-fourier}), the character values of $SS^{(-1)}$ and $TT^{(-1)}$ uniquely determine $SS^{(-1)}$ and $TT^{(-1)}$, respectively. Together with \eqref{eqn-def} and \eqref{eqn-def2}, we can see that $SS^{(-1)}$ uniquely determines $TT^{(-1)}$ and vice versa. Thus a formally dual set contains much information about its corresponding formally dual pair. In particular, when we study the nonexistence of primitive formally dual pairs, it suffices to consider the corresponding primitive formally dual sets.
\item[(3)] Note that $|\chi_y(S)|^2$ and $|\chi_y(T)|^2$ are both algebraic integers and rational numbers, therefore they must be nonnegative integers.
\end{itemize}
\end{remark}

Note that there are various isomorphisms between $G$ and $\wh{G}$. Below we are going to show that the specific choice of the isomorphism $\De$ in Definition~\ref{def-iso} is not essential.

\begin{proposition}
Let $\De_1, \De_2$ be two isomorphisms from $G$ to $\wh{G}$. Then $S$ and $T$ form a formally dual pair in $G$ under the isomorphism $\De_1$ if and only if $S$ and $\De_2^{-1}(\De_1(T))$ form a formally dual pair in $G$ under the isomorphism $\De_2$.
\end{proposition}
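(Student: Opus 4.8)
The plan is to reduce everything to the original, isomorphism-free notion of a formally dual pair from Definition~\ref{def-ori}, where $S \subset G$ and the second set genuinely lives in $\wh{G}$. The key bridging observation is that for any isomorphism $\De: G \rightarrow \wh{G}$ with $\De(y)=\chi_y$ and any subset $T \subset G$, the pair $(S,T)$ is a formally dual pair in $G$ under $\De$ in the sense of Definition~\ref{def-iso} if and only if $S \subset G$ and $\De(T) \subset \wh{G}$ form a formally dual pair in the sense of Definition~\ref{def-ori}. Granting this, the proposition follows by a one-line chain: $(S,T)$ is formally dual under $\De_1$ iff $(S,\De_1(T))$ is formally dual in the sense of Definition~\ref{def-ori}; since $\De_2$ is a bijection we have $\De_2(\De_2^{-1}(\De_1(T)))=\De_1(T)$, so this is in turn equivalent to $(S,\De_2^{-1}(\De_1(T)))$ being formally dual under $\De_2$.

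To prove the bridging observation I would compare the defining identity of Definition~\ref{def-ori} applied to $(S,\De(T))$ with equation~\eqref{eqn-def} of Definition~\ref{def-iso} applied to $(S,T)$. Because $\De$ is a bijection, $|\De(T)|=|T|$, and as $\chi$ ranges over $\wh{G}$ we may write $\chi=\chi_y=\De(y)$ for a unique $y \in G$, so the left-hand sides $|\chi(S)|^2=|\chi_y(S)|^2$ match term by term. The heart of the matter is the weight enumerator: since $\De$ is a group homomorphism, $\chi_a\chi_b^{-1}=\De(a)\De(b)^{-1}=\De(ab^{-1})$, and injectivity of $\De$ gives $\chi_y=\chi_a\chi_b^{-1}$ iff $y=ab^{-1}$. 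Hence
\[
\nu_{\De(T)}(\chi_y)=|\{(a,b) \in T \times T \mid y=ab^{-1}\}|=\nu_T(y),
\]
so Definition~\ref{def-ori} for $(S,\De(T))$ evaluated at $\chi_y$ reads $|\chi_y(S)|^2=\tfrac{|S|^2}{|T|}\nu_T(y)$, which is precisely~\eqref{eqn-def}. As $y$ runs over $G$, the character $\chi_y$ runs over all of $\wh{G}$, so the two families of conditions hold simultaneously. Definition~\ref{def-iso} additionally requires~\eqref{eqn-def2}, but by Remark~\ref{rem-def}(2) the validity of~\eqref{eqn-def} for every $y$ already forces~\eqref{eqn-def2} for every $y$ (the roles of $S$ and $T$ being interchangeable), so no further computation is needed.

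The only genuinely delicate point is the weight-enumerator identity, where I must use both that $\De$ is a homomorphism (to rewrite the character quotient $\chi_a\chi_b^{-1}$ as $\De$ of a single group element) and that it is injective (to transport the relation $y=ab^{-1}$ back from $\wh{G}$ to $G$); everything else amounts to bookkeeping about cardinalities and the identity $\De_2 \circ \De_2^{-1}=\mathrm{id}$. Accordingly I would present the bridging observation as the substantive content of the argument and then deduce the stated equivalence from it in a single chain of iff's.
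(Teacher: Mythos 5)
Your proof is correct and is in substance the same as the paper's: both arguments come down to the invariance of the weight enumerator under a group isomorphism, the paper via the identity $\nu_{\De_2^{-1}(\De_1(T))}(y)=\nu_T(\De_1^{-1}(\De_2(y)))$ applied inside Definition~\ref{def-iso}, and you via the equivalent identity $\nu_{\De(T)}(\chi_y)=\nu_T(y)$ that lets you factor through Definition~\ref{def-ori} with the set $\De_1(T)\subset\wh{G}$. This is only a difference of bookkeeping, and your appeal to Remark~\ref{rem-def}(2) to dispense with \eqref{eqn-def2} matches what the paper's proof does implicitly, since it too only verifies the condition \eqref{eqn-def}.
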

\begin{proof}
For each $y \in G$, we have
$$
\frac{|S|^2}{|\De_2^{-1}(\De_1(T))|}\nu_{\De_2^{-1}(\De_1(T))}(y)=\frac{|S|^2}{|T|}\nu_T(\De_1^{-1}(\De_2(y))).
$$
Hence, $S$ and $T$ form a formally dual pair in $G$ under the isomorphism $\De_{1}$, if and only if for each $y \in G$,
\begin{align*}
\frac{|S|^2}{|\De_2^{-1}(\De_1(T))|}\nu_{\De_2^{-1}(\De_1(T))}(y)&=\frac{|S|^2}{|T|}\nu_T(\De_1^{-1}(\De_2(y)))=|(\De_1(\De_1^{-1}(\De_2(y))))(S)|^2=|(\De_2(y))(S)|^2,
\end{align*}
which is equivalent to the fact that $S$ and $\De_2^{-1}(\De_1(T))$ form a formally dual pair in $G$ under the isomorphism $\De_2$.
\end{proof}

Thanks to the above proposition, when we talk about a formally dual pair $S$ and $T$ in $G$, we always assume that a specific isomorphism $\De$ from $G$ to $\wh{G}$ is chosen, so that \eqref{eqn-def} and \eqref{eqn-def2} hold for each $y \in G$ and each $\chi_y=\De(y)$. From now on, we always regard a formally dual pair as two subsets of a group $G$.

The following are the simplest examples of primitive formally dual pairs.

\begin{example}\label{exam-tri}{\rm (The trivial configuration)}
Let $S=T=\{1\}$ be subsets of the trivial group $G=\{1\}$. Then $S$ and $T$ form a primitive formally dual pair in $G$.
\end{example}

\begin{example}\label{exam-TITO}{\rm (The TITO configuration)}
Let $S=T=\{1,g\}$ be subsets of $G=\Z_4=\{1,g,g^2,g^3\}$. Then $S$ and $T$ form a primitive formally dual pair in $G$. This example is called the TITO configuration in \cite[p. 131]{CKRS}, which stands for ``two-in two-out''.
\end{example}

Indeed, for primitive formally dual pairs in cyclic groups, we have the following main conjecture.
\begin{conjecture}\label{conj-main}{\rm \cite[p. 135]{CKRS}}
There exists no primitive formally dual pair in cyclic groups, except the trivial configuration and the TITO configuration.
\end{conjecture}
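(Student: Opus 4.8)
The statement is the central open conjecture of the paper, so rather than a complete argument I outline the strategy by which one attacks it and isolate the essential difficulty. The starting point is the numerical skeleton forced by Definition~\ref{def-iso}. Summing \eqref{eqn-def} over all $y \in G$ and using the orthogonality relation $\sum_y |\chi_y(S)|^2 = |G|\,|S|$ together with $\sum_y \nu_T(y) = |T|^2$ yields the parameter equation $|S|\,|T| = |G| = N$. Writing $k = |S|$ and $m = |T|$, every primitive formally dual pair in $\Z_N$ therefore satisfies $km = N$, and for each nonprincipal $\chi_y$ the quantity $|\chi_y(S)|^2 = (k^2/m)\nu_T(y)$ must be a nonnegative \emph{integer} by Remark~\ref{rem-def}(3). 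This rigid interplay between the divisor pair $(k,m)$ and the integrality of the scaled weight enumerators already eliminates many candidate parameters, and it is the engine for the finer arguments below.

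The next step is to exploit that the character values $\chi_y(S)$ are algebraic integers in $\Q(\zeta_N)$, on which the Galois group $\Gal(\Q(\zeta_N)/\Q) \cong \Z_N^*$ acts. Passing to the group ring, set $A = SS^{(-1)}$ and $B = TT^{(-1)}$ in $\Z[\Z_N]$; formal duality says precisely that $\chi_y(A) = (k^2/m)\,[B]_y$ for every $y$, so that the character spectrum of $A$ is governed by the coefficient sequence of $B$ and, by Fourier inversion, vice versa. I would then impose the two classical nonexistence engines from the theory of difference-set-like objects. First, \emph{self-conjugacy}: for a prime $p \mid N$ a power of which is congruent to $-1$ modulo the relevant part of $N$, the prime ideal above $p$ is fixed by complex conjugation, which forces strong $p$-adic divisibility on $\chi_y(S)$ and pins $|\chi_y(S)|^2$ down to a short list of admissible values. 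Second, \emph{field descent} (of Turyn--Schmidt type): the modulus $|\chi_y(S)|$ can only assume prescribed values in $\Q(\zeta_N)$, and descending to a proper subfield bounds these moduli so tightly that, combined with $km = N$, the required equalities $(k^2/m)\nu_T(y)$ become unsatisfiable unless the configuration collapses.

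To convert these constraints into nonexistence I would couple them with the structural results developed earlier in the paper. The even-set formalism of Section~\ref{sec4} and the identification of rank-three primitive formally dual sets with $(n,n,n,1)$ relative difference sets in Theorem~\ref{thm-rankthree} allow one to import the extensive nonexistence theory of relative difference sets directly; in the cyclic case such relative difference sets are themselves severely constrained, and ruling them out for $N \neq 1, 4$ disposes of the lowest-rank stratum. Primitivity is essential throughout: it forbids $S$ from sitting in a coset of a proper subgroup and from being a union of cosets of a nontrivial subgroup, which is exactly what prevents the argument from degenerating to the trivial Example~\ref{exam-subgp} via the lifting phenomenon recorded in Remark~\ref{rem-def}(1). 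Iterating the self-conjugacy and field-descent tests across the divisor factorizations $km = N$, one expects to eliminate every $N$ whose prime factorization is arithmetically favorable.

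The hard part, and the reason the statement remains a conjecture rather than a theorem, is uniformity in $N$. Both the self-conjugacy hypothesis and the field-descent bounds require special arithmetic of the prime divisors of $N$: when no prime is self-conjugate modulo $\exp(\Z_N) = N$, or when $N$ carries many distinct prime factors so that the cyclotomic field admits no useful descent, the character sums acquire enough freedom that the integrality equations $(k^2/m)\nu_T(y) \in \Z_{\ge 0}$ can no longer be contradicted. Consequently I expect this approach to settle all but a sparse set of residual moduli, to be cleared by direct computation in the small range (as reflected in the three remaining open cases for $N \le 1000$ in Remark~\ref{rem-cycopen}), while a uniform resolution for every $N$ that isolates $N = 1$ and $N = 4$ as the only survivors lies beyond the reach of these techniques and constitutes the genuine open problem.
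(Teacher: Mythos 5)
This statement is the paper's main conjecture, attributed to \cite[p.~135]{CKRS}; the paper supplies no proof of it, only partial evidence, so you were right to present a strategy rather than claim an argument. Your outline --- the parameter relation $|S|\,|T|=|G|$ obtained by summing \eqref{eqn-def} over $G$, the integrality of $\frac{|S|^2}{|T|}\nu_T(y)$ from Remark~\ref{rem-def}(3), self-conjugacy and Ma's-lemma divisibility constraints, the reduction of the rank-three stratum to cyclic $(n,n,n,1)$ relative difference sets, and the residual computational cases of Remark~\ref{rem-cycopen} --- is an accurate summary of exactly the machinery the paper develops (Proposition~\ref{prop-non}, Theorem~\ref{thm-selfconj}, Theorem~\ref{thm-rankthree} combined with Proposition~\ref{prop-cycRDS}, and Question~\ref{ques-rankthree}, whose affirmative answer would settle the conjecture), the only slight divergence being your appeal to Turyn--Schmidt field descent, which the paper itself does not invoke, relying instead on Ma's lemma and the self-conjugacy condition.
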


Let $N$ be a positive integer and $p$ be a prime divisor of $N$. For a positive integer $a$, we say $p^a \big\| N$ if $p^a \big| N$ and $p^{a+1} \nmid N$. There are some partial results in favor of Conjecture~\ref{conj-main}, which are summarized below.

\begin{proposition}\label{prop-cyclic}
There exists no primitive formally dual pair in the following cyclic groups.
\begin{itemize}
\item[(1)] $\Z_{p^a}$, where $p$ is a prime and $a \ge 1$ {\rm($p=2$ \cite[Section 4.2]{Xia}, $p$ odd \cite[Theorem 1.1]{Sch})}.
\item[(2)] $\Z_{p^aq}$, where $p$ and $q$ are two distinct primes and $a \ge 1$ {\rm\cite[Proposition 7.4]{Mali}}.
\item[(3)] $\Z_{p^2q^2}$, where $p$ and $q$ are two distinct primes {\rm\cite[Proposition 7.7]{Mali}}.
\item[(4)] $\Z_{p^aq^2}$, where $p$ and $q$ are two distinct primes and $a$ is odd {\rm\cite[Proposition 7.5]{Mali}}.
\item[(5)] $\Z_{p^4q^3}$, where $p$ and $q$ are two distinct primes {\rm\cite[Proposition 7.6]{Mali}}.
\item[(6)] $\Z_{p^3q^3}$, where $p$ and $q$ are two distinct primes, except possibly that $p$ and $q$ are simultaneously twin primes and Wieferich pair {\rm\cite[Proposition A.2]{Mali}}.
\item[(7)] $\Z_{p^aq^3}$, where $p$ and $q$ are two distinct primes, $a \ge 5$ and $p,q<10^3$ {\rm\cite[Proposition A.3]{Mali}}.
\item[(8)] $\Z_{p^aq^b}$, where $p$ and $q$ are two distinct primes, $a,b \ge 4$ and $p,q<10^3$ {\rm\cite[Proposition A.4]{Mali}}.
\item[(9)] $\Z_{p^aq^b}$, where $p$ and $q$ are two distinct primes and $a,b \ge 1$, except possibly finite many exceptions for each pair $(p,q)$ {\rm\cite[Theorem 7.3]{Mali}}.
\item[(10)] $\Z_{N}$, where $p \big\| N$ is a prime and $p$ is self-conjugate modulo $N$ {\rm\cite[Theorem 8.3]{Mali}}.
\end{itemize}
\end{proposition}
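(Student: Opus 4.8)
The plan is first to record that the ten parts are assembled from the cited works \cite{Xia,Sch,Mali}, so no new argument is required here; nevertheless I would organise a uniform treatment as follows. Throughout I use the reformulation in Definition~\ref{def-iso}: for a primitive formally dual pair $S,T$ in $\Z_N$ with $s=|S|$ and $t=|T|$, summing the identity $|\chi_y(S)|^2=\tfrac{s^2}{t}\nu_T(y)$ over all $y$ and invoking orthogonality yields the basic numerical constraint $N=st$, while Remark~\ref{rem-def}(3) guarantees that every $|\chi_y(S)|^2$ is a nonnegative rational integer. The crux is then to decide which cyclotomic integers $\chi_y(S)\in\Z[\zn]$ can realise the prescribed rational absolute values $\tfrac{s^2}{t}\nu_T(y)$, since the admissible list of values $\nu_T(y)$ is tightly constrained once $N$ is fixed.

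For the prime-power case (1) and the self-conjugate case (10) I would run the field-descent / self-conjugacy machinery. When a prime $p\,\|\,N$ is self-conjugate modulo $N$, complex conjugation agrees with a power of the Frobenius on the prime ideals of $\Z[\zn]$ lying above $p$; each such ideal is therefore fixed by conjugation, and the factorisation of $\chi_y(S)\overline{\chi_y(S)}=|\chi_y(S)|^2$ pins down the exact power of $p$ dividing $\chi_y(S)$. Feeding this divisibility back into $N=st$ and into the Fourier-inversion duality between $SS^{(-1)}$ and $TT^{(-1)}$ (Remark~\ref{rem-def}(2)) produces a contradiction with primitivity, which is precisely the statement that $S$ is neither contained in a coset of a proper subgroup nor a union of cosets of a nontrivial subgroup.

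For the genuine two-prime cases (2)--(9), write $N=p^aq^b$ and analyse the same character equation prime by prime: decompose $\chi_y(S)$ according to the order of $\chi_y$ and apply vanishing-sum-of-roots-of-unity results of Conway--Jones type to pin down the multiset of nonzero character values. The cases with small fixed exponents (2)--(6) then reduce to a finite Diophantine comparison of these values, whereas the ``$p,q<10^3$'' and ``finitely many exceptions'' clauses in (7)--(9) reflect that the governing inequalities between powers of $p$ and $q$ can fail only for finitely many, explicitly boundable, pairs; for the residual pairs one certifies nonexistence by direct computation.

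The step I expect to be the main obstacle is exactly the two-prime analysis with both exponents large, as in (8) and (9). There the character values no longer fall into a rigid two-level pattern, the self-conjugacy shortcut is unavailable, and one must control a coupled system in the powers of $p$ and $q$ uniformly in the exponents. This is precisely where the cited arguments must retreat either to effective bounds under small-prime hypotheses or to a ``finitely many exceptions'' conclusion, and it is why Conjecture~\ref{conj-main} remains open in full generality.
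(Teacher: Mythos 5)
The paper offers no proof of this proposition at all: it is a survey statement whose ten parts are established in the cited works \cite{Xia}, \cite{Sch} and \cite{Mali}, and you correctly recognize that nothing beyond the citations is required. Your accompanying sketch (the Parseval derivation of $N=st$, the self-conjugacy/field-descent argument for parts (1) and (10), and the two-prime character-value analysis with its finitely-many-exceptions caveat for parts (2)--(9)) is a fair high-level description of the methods actually used in those references, so your treatment matches the paper's.
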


In contrast to the case of cyclic groups, which seems to contain very few primitive formally dual pairs, some infinite families of primitive formally dual pairs are known in noncyclic groups. We summarize all known constructions of primitive formally dual pairs below.

\begin{proposition}\label{prop-con}
In the following cases, there exists a primitive formally dual pair $S$ and $T$ in $G$.
\begin{itemize}
\item[(1)] $G=\{1\}$, where $S=T=\{1\}$ {\rm(Example~\ref{exam-tri})}.
\item[(2)] $G=\Z_4=\{1,g,g^2,g^3\}$, where $S=T=\{1,g\}$ {\rm(Example~\ref{exam-TITO})}.
\item[(3)] $p$ odd prime, $G=\Z_p \times \Z_p$, where $S=\{(x,x^2) \mid x \in \Z_p\}$ and $T=\{(ax^2,bx) \mid x \in \Z_p\}$, and $a,b$ are nonzero elements of $\Z_p$ {\rm \cite[Theorem 3.2]{CKRS}}.
\item[(4)] $p$ odd prime, $G=\Z_{p^k} \times \Z_{p^k}$ with $k \ge 2$, where $S=\{(x,x^2) \mid x \in \Z_p\}$ and $T=\{(x^2,x) \mid x \in \Z_p\}$ {\rm \cite[Theorem 4.1]{Xia}}.
\end{itemize}
\end{proposition}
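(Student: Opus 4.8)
The plan is to treat the four cases separately, in each case reducing to a direct verification of the defining identities \eqref{eqn-def} and \eqref{eqn-def2} together with a check of primitivity. Cases (1) and (2) require nothing new: they are exactly the configurations of Examples~\ref{exam-tri} and~\ref{exam-TITO}, where both identities and primitivity were already recorded. By Remark~\ref{rem-def}(2) the roles of $S$ and $T$ are interchangeable, so in the two remaining cases it suffices to establish \eqref{eqn-def} for every $y$; then \eqref{eqn-def2} follows automatically, and we only need to add the primitivity check.

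For case (3), I would identify $\wh{G}$ with $G=\Zp\times\Zp$ through the pairing $\langle(u,v),(a,b)\rangle=\zp^{ua+vb}$, so that for $y=(u,v)$ one has $\chi_y(S)=\sum_{x\in\Zp}\zp^{ux+vx^2}$, a quadratic Gauss sum. First I would evaluate $|\chi_y(S)|^2$ by completing the square: if $v=0$ the sum is $p$ or $0$ according to whether $u=0$, while if $v\ne0$ it reduces to a nontrivial quadratic Gauss sum of modulus $\sqrt{p}$, whence $|\chi_y(S)|^2=p$. The point to emphasize is that only the magnitude $p$ of the Gauss sum is needed, so the exact Gauss sum evaluation (and hence quadratic reciprocity) can be bypassed. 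Next I would compute $\nu_T(u,v)$ for $T=\{(ax^2,bx)\mid x\in\Zp\}$ by solving $b(x_1-x_2)=v$ and $a(x_1^2-x_2^2)=u$: for $v\ne0$ these determine $x_1-x_2$ and $x_1+x_2$ uniquely (using $p$ odd and $a,b\ne0$), giving $\nu_T=1$, while for $v=0$ one gets $\nu_T(0,0)=p$ and $\nu_T(u,0)=0$ for $u\ne0$. Comparing with $\tfrac{|S|^2}{|T|}\nu_T(y)=p\,\nu_T(y)$ matches $|\chi_y(S)|^2$ in all three regimes. Primitivity then follows because $S$ (and likewise the parabola $T$) has size $p$ but is not an affine line, so it is neither contained in a coset of a proper subgroup nor a union of cosets of a nontrivial subgroup.

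Case (4) runs along the same template over the ring $\Z_{p^k}$, using a pairing valued in $\zeta_{p^k}$, so that $\chi_y(S)=\sum_{x}\zeta_{p^k}^{ux+vx^2}$ becomes a Gauss sum modulo $p^k$ and the normalizing factor in \eqref{eqn-def} is again $\tfrac{|S|^2}{|T|}=|S|$. I expect this to be the main obstacle: because $\Z_{p^k}$ has zero divisors, both the magnitude of $\chi_y(S)$ and the count $\nu_T(y)$ depend on the $p$-adic valuation of $v$, so completing the square and solving the pair of difference equations must be stratified according to that valuation. The task is to show that at each valuation level the degeneration of the Gauss sum is exactly compensated by the collapse in $\nu_T(y)$, so that the identity $|\chi_y(S)|^2=|S|\,\nu_T(y)$ persists; once this bookkeeping is done and primitivity is checked as before, the proposition is complete. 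Alternatively, since these are known constructions, cases (3) and (4) may simply be invoked from \cite[Theorem 3.2]{CKRS} and \cite[Theorem 4.1]{Xia}.
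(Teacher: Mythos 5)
Your proposal is sound, but note that the paper offers no proof of Proposition~\ref{prop-con} at all: it is a summary of known results, with cases (1) and (2) deferred to Examples~\ref{exam-tri} and~\ref{exam-TITO} and cases (3) and (4) cited from the literature, exactly as your closing fallback suggests. Where the paper does re-derive these constructions, it does so by a different route than yours: case (3) falls out of Theorem~\ref{thm-RDS}(2), which characterizes the pair via the $(p,p,p,1)$-RDS property and its character-value description, and case (4) falls out of Theorem~\ref{thm-GRDS} together with Proposition~\ref{prop-square}, which performs the Gauss-sum computation over the general Galois ring $\GR(p^t,s)$ (the case $s=1$ recovering $\Z_{p^k}\times\Z_{p^k}$). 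Your direct verification of case (3) is complete and correct, and arguably more elementary since it avoids the RDS formalism: only the modulus $\sqrt{p}$ of the quadratic Gauss sum is needed, and the count $\nu_T(u,v)$ is settled by the unique recovery of $x_1-x_2$ and $x_1+x_2$; the primitivity argument (a $p$-subset of $\Zp\times\Zp$ is contained in a coset of a proper subgroup iff it is an affine line) is also fine. The one place your write-up falls short of a self-contained proof is case (4): you correctly identify that the valuation of $v$ stratifies both $|\chi_y(S)|^2$ and $\nu_T(y)$, but you do not carry out the bookkeeping. That computation is precisely what Proposition~\ref{prop-square} does --- after completing the square, the inner sum $\sum_y \zeta_{p^k}^{2vyz}$ forces $z$ into the ideal $(p^{k-v_p(v)})$, on which $vz^2=0$, collapsing the sum to $p^k\sum_{z\in(p^{k-v_p(v)})}\zeta_{p^k}^{uz}$ --- so your plan would go through, but as written case (4) rests on the citation rather than on the sketched argument.
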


A primitive formally dual pair possesses strong symmetry, which implies strong restrictions on their existence. We summarize the known necessary conditions about primitive formally dual pairs. An integer $a$ is called square-free, if $b^2 \big| a$ holds only when $b=1$. For two integers $a$ and $b$, we use $(a,b)$ to denote their greatest common divisor.
\begin{proposition}\label{prop-non}
Let $S$ and $T$ be a formally dual pair in $G$. Then the following holds.
\begin{itemize}
\item[(1)] $|G|=|S|\cdot|T|$ {\rm\cite[Theorem 2.8]{CKRS}}.
\item[(2)] $(|S|,|T|)>1$. In particular, $|G|$ is not square-free {\rm\cite[Theorem 4.8]{Xia}}.
\item[(3)] If $G=\Z_N$ and $g$ is a generator of $\Z_N$, for each $0 \le i \le N-1$, we have $\nu_S(g^i)=\nu_S(g^{(i,N)})$ and $\nu_T(g^i)=\nu_T(g^{(i,N)})$ {\rm\cite[Theorem 3.1]{Sch}}.
\item[(4)] If $G=\Z_N$ and $N$ has exactly two prime divisors, then $\nu_{S}(g), \nu_T(g) \ge 1$, where $g$ is a generator of $\Z_N$ {\rm\cite[Lemma 7.1]{Mali}}.
\end{itemize}
\end{proposition}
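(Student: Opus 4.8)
I would prove the four parts separately, since they come from different sources and rest on different tools. Throughout I read the proposition for \emph{primitive} pairs where primitivity is needed (parts (2) and (4) genuinely fail for the degenerate subgroup pairs of Example~\ref{exam-subgp}, e.g.\ $S=\{0,3\},T=\{0,2,4\}$ in $\Z_6$). \textbf{Part (1)} holds for every formally dual pair and follows from the orthogonality relations: summing \eqref{eqn-def} over all $y\in G$, the left-hand side is $\sum_{y\in G}|\chi_y(S)|^2=|G|\cdot|S|$, because $\sum_{\chi\in\wh{G}}\chi(a_1a_2^{-1})$ equals $|G|$ when $a_1=a_2$ and $0$ otherwise, while the right-hand side is $\frac{|S|^2}{|T|}\sum_{y\in G}\nu_T(y)=\frac{|S|^2}{|T|}|T|^2=|S|^2|T|$ since $\sum_y\nu_T(y)=|T|^2$; equating gives $|G|=|S||T|$. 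For \textbf{Part (2)}, write $n=|S|$, $m=|T|$ and suppose $(n,m)=1$. By Remark~\ref{rem-def}(3) each $|\chi_y(S)|^2$ is a nonnegative integer, so \eqref{eqn-def} gives $m\mid n^2\nu_T(y)$, and coprimality forces $m\mid\nu_T(y)$ for all $y$; as $0\le\nu_T(y)\le m$, we get $\nu_T(y)\in\{0,m\}$, and $K=\{y:\nu_T(y)=m\}=\{y:yT=T\}$ is a subgroup of which $T$ is a union of cosets. Primitivity of $T$ forces $K$ trivial, hence $\nu_T(y)=0$ for $y\neq 1$ and $m=1$; the symmetric equation \eqref{eqn-def2} yields $n=1$ in the same way, so $|G|=1$. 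Excluding this trivial configuration, $(|S|,|T|)>1$, and any prime dividing $(|S|,|T|)$ has its square dividing $|S||T|=|G|$, so $|G|$ is not square-free.

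\textbf{Part (3)} uses only that $G=\Z_N$ and is a Galois-descent argument. Identifying $\chi_y(S)=\sum_{s\in S}\zeta_N^{ys}$, each unit $t\in\Z_N^{*}$ induces $\sigma_t\in\Gal(\Q(\zeta_N)/\Q)$ with $\zeta_N\mapsto\zeta_N^{t}$, and $\sigma_t$ sends $\chi_y(S)\mapsto\chi_{ty}(S)$ and $\overline{\chi_y(S)}=\chi_{-y}(S)\mapsto\overline{\chi_{ty}(S)}$, hence $|\chi_y(S)|^2\mapsto|\chi_{ty}(S)|^2$. Since $|\chi_y(S)|^2$ is rational it is fixed by $\sigma_t$, so $|\chi_y(S)|^2=|\chi_{ty}(S)|^2$ for every unit $t$. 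The orbit of $y$ under $\Z_N^{*}$ is exactly $\{z:(z,N)=(y,N)\}$, so $|\chi_y(S)|^2$ depends only on $(y,N)$; by \eqref{eqn-def} the same holds for $\nu_T$, giving $\nu_T(g^i)=\nu_T(g^{(i,N)})$, and \eqref{eqn-def2} yields the corresponding statement for $\nu_S$.

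\textbf{Part (4).} Let $N=p^aq^b$ and suppose, for contradiction, that $\nu_S(g)=0$. By Part (3), $\nu_S(g^i)=0$ whenever $(i,N)=1$, so no difference of two elements of $S$ is a generator of $\Z_N$. Using the CRT isomorphism $\Z_N\cong\Z_{p^a}\times\Z_{q^b}$ followed by coordinatewise reduction, form $\phi:\Z_N\to\Z_p\times\Z_q$; an element is a generator precisely when both coordinates of its image are nonzero. Hence every pairwise difference of $\phi(S)$ lies in $(\{0\}\times\Z_q)\cup(\Z_p\times\{0\})$, i.e.\ any two points of $\phi(S)$ agree in some coordinate. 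A short case analysis shows that such a set must lie in a single coset of $\{0\}\times\Z_q$ or of $\Z_p\times\{0\}$; pulling back through $\phi$, this places $S$ inside a coset of a proper subgroup of $\Z_N$, contradicting primitivity. Therefore $\nu_S(g)\ge 1$, and $\nu_T(g)\ge 1$ follows by interchanging the roles of $S$ and $T$.

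\textbf{Main obstacle.} Parts (1)--(3) are routine once the right tool is identified; the real work is in Part (4). The delicate points are setting up the double reduction $\phi$ so that ``generator'' translates exactly into ``both coordinates nonzero'', proving the shared-coordinate dichotomy, and---most importantly---extracting from it a proper coset containing $S$ so that primitivity is violated. This is also exactly where the hypothesis of precisely two prime divisors is indispensable: with three or more primes a non-generator difference need not lie on a single coordinate axis, and the combinatorial dichotomy breaks down.
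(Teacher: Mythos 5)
The paper states this proposition without proof, as a citation summary of results from \cite{CKRS}, \cite{Xia}, \cite{Sch} and \cite{Mali}, so there is no internal argument to compare against; your reconstruction is correct in all four parts. Your opening caveat is a genuine catch: as literally stated, (2) and (4) fail for the degenerate pairs of Example~\ref{exam-subgp} (your $\Z_6$ example with $S=\{0,3\}$, $T=\{0,2,4\}$ is right, and (2) also fails for the trivial configuration), so primitivity must be read into the hypothesis for those two items --- which is how the paper in fact uses them later (e.g.\ in Corollary~\ref{cor-projorder} and Table~\ref{tab-table}). Part (1) by summing \eqref{eqn-def} over $G$ and invoking orthogonality is the standard argument and needs no primitivity. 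Part (2) is sound: coprimality forces $|T|\mid\nu_T(y)$, hence $\nu_T(y)\in\{0,|T|\}$, the level set $\{y:\nu_T(y)=|T|\}$ is the stabilizer of $T$, and primitivity collapses everything to $|G|=1$. Part (3) is precisely the Galois-descent argument the paper itself gives later in Lemma~\ref{lem-char} and Proposition~\ref{prop-decom} (which generalizes this item to arbitrary abelian groups), so there you are aligned with the paper's own machinery. For part (4), the shared-coordinate dichotomy you invoke is true but deserves its two lines: if two points $u,v$ of $\phi(S)$ have distinct first coordinates they must share their second coordinate $c$, and then any $w$ with second coordinate different from $c$ would have to share its first coordinate with both $u$ and $v$, which is impossible since those differ; hence either all first coordinates agree or all second coordinates agree. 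With that filled in, the pullback places $S$ in a coset of an index-$p$ or index-$q$ subgroup of $\Z_N$, contradicting primitivity. I see no gaps.
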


Now we proceed to deal with the equivalence problem of formally dual pairs. For this purpose, we first introduce more notation.

Since every abelian group $G$ can be expressed as direct products of cyclic groups, there exists a natural inner product $\lan \cdot ,  \cdot \ran$ in $G$ defined as follows.

\begin{definition}
Let $G=\Z_{n_1}\times\Z_{n_2}\times\cdots\times\Z_{n_t}$ and $n=\exp(G)$. For $x=(x_1,x_2,\ldots,x_t) \in G$ and $y=(y_1,y_2,\ldots,y_t) \in G$, the inner product of $x$ and $y$ is
$$
\lan x,y \ran= \frac{n}{n_1}x_1y_1+\frac{n}{n_2}x_2y_2+\cdots+\frac{n}{n_t}x_ty_t \pmod{n}.
$$
\end{definition}

For $\phi \in \Aut(G)$ and each $y \in G$, there exists a unique element $y^{\pr} \in G$, such that $\lan \phi(x),y \ran=\lan x,y^{\pr} \ran$ holds for every $x \in G$. Thus, $\phi$ induces a bijection $\phi^*$ on $G$, such that $\lan \phi(x),y \ran=\lan x,\phi^*(y) \ran$ holds for each $x,y \in G$. By the properties of the inner product, we can see that $\phi^* \in \Aut(G)$, and $\phi^*$ is called the \emph{adjoint} of $\phi$.

According to Remark~\ref{rem-trans}, formal duality is invariant under translation. Let $S$ and $T$ be a formally dual pair in $G$. The following proposition is a discrete analogue of \cite[Lemma 2.4]{CKRS} and \cite[Lemma 2]{CKS}, which says that the action of $\Aut(G)$ on a formally dual pair produces a series of essentially the same formally dual pairs. Note that for $A \subset G$ and $g \in G$, we use $gA$ to denote the set $\{ga \mid a \in A\}$.

\begin{proposition}\label{prop-auto}
Let $G$ be a group. Let $\phi \in \Aut(G)$ and $\phi^*$ be the adjoint of $\phi$. Suppose $S$ and $T$ form a formally dual pair in $G$. Then $\phi(S)$ and $(\phi^*)^{-1}(T)$ also form a formally dual pair in $G$.
\end{proposition}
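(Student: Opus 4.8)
The plan is to verify the two defining identities \eqref{eqn-def} and \eqref{eqn-def2} directly for the pair $S'=\phi(S)$ and $T'=(\phi^*)^{-1}(T)$, by tracking how the automorphism $\phi$ transforms character sums and weight enumerators. Throughout I use the identification $\chi_y(x)=\zn^{\lan x,y\ran}$, under which the adjoint relation $\lan \phi(x),y\ran=\lan x,\phi^*(y)\ran$ translates directly into a statement about characters. Since $\phi$ and $\phi^*$ are automorphisms, $|S'|=|S|$ and $|T'|=|T|$ are immediate.

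First I would compute the character sum of $S'$. Writing $\chi_y(\phi(s))=\zn^{\lan \phi(s),y\ran}=\zn^{\lan s,\phi^*(y)\ran}=\chi_{\phi^*(y)}(s)$ and summing over $s\in S$ gives the substitution identity $\chi_y(\phi(S))=\chi_{\phi^*(y)}(S)$, hence $|\chi_y(S')|^2=|\chi_{\phi^*(y)}(S)|^2$. Next I would handle the weight enumerator of $T'$. Because $(\phi^*)^{-1}$ is a group homomorphism, each pair $(a_1,a_2)\in T'\times T'$ with $a_1a_2^{-1}=y$ corresponds bijectively, via $a_i=(\phi^*)^{-1}(b_i)$, to a pair $(b_1,b_2)\in T\times T$ with $b_1b_2^{-1}=\phi^*(y)$, giving $\nu_{T'}(y)=\nu_T(\phi^*(y))$.

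Combining these two observations with the hypothesis that $S,T$ form a formally dual pair, applied at the point $\phi^*(y)$, yields
\[
|\chi_y(S')|^2=|\chi_{\phi^*(y)}(S)|^2=\frac{|S|^2}{|T|}\nu_T(\phi^*(y))=\frac{|S'|^2}{|T'|}\nu_{T'}(y),
\]
which is precisely \eqref{eqn-def} for $S',T'$. Identity \eqref{eqn-def2} then follows from the interchangeability of the two conditions recorded in Remark~\ref{rem-def}(2). Alternatively, one repeats the same computation with the roles reversed, using that the adjoint of $\phi^*$ is $\phi$ (a consequence of the symmetry of the inner product) to obtain $\chi_y(T')=\chi_{\phi^{-1}(y)}(T)$ and $\nu_{S'}(y)=\nu_S(\phi^{-1}(y))$, and then invoking \eqref{eqn-def2} for $S,T$.

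I expect no genuine obstacle here, as the computation is routine once the bookkeeping is in place. The one point requiring care is the direction of the transformation on the character side: evaluating $\chi_y$ on $\phi(S)$ produces $\phi^*(y)$, not $\phi(y)$, so in order to match $\nu_{T'}(y)$ with $\nu_T(\phi^*(y))$ one is forced to twist $T$ by $(\phi^*)^{-1}$ rather than by $\phi^*$. Keeping this adjoint bookkeeping consistent is the only place where a sign-of-the-map error could creep in.
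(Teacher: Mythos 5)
Your proof is correct and takes essentially the same route as the paper's: both use the adjoint identity to rewrite $|\chi_y(\phi(S))|^2$ as $|\chi_{\phi^*(y)}(S)|^2$, observe that $\nu_{(\phi^*)^{-1}(T)}(y)=\nu_T(\phi^*(y))$ since $(\phi^*)^{-1}$ is an automorphism, and then invoke the formal duality of $S$ and $T$ at the point $\phi^*(y)$, with the second identity handled by the interchangeability of the two conditions.
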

\begin{proof}
Let $n=\exp(G)$ and $\zeta_n$ be an $n$-th primitive root of unity. Since $S$ and $T$ form a formally dual pair, for each $y \in G$, we have
$$
\frac{|\phi(S)|^2}{|T|}\nu_{(\phi^*)^{-1}(T)}(y)=\frac{|S|^2}{|T|}\nu_{T}(\phi^*(y))=|\chi_{\phi^{*}(y)}(S)|^2.
$$
Moreover, for each $y \in G$,
$$
|\chi_y(\phi(S))|^2=\sum_{g \in [SS^{(-1)}]} \zn^{\lan \phi(g),y \ran}=\sum_{g \in [SS^{(-1)}]} \zn^{\lan g,\phi^*(y) \ran}=|\chi_{\phi^{*}(y)}(S)|^2=\frac{|\phi(S)|^2}{|T|}\nu_{(\phi^*)^{-1}(T)}(y).
$$
Therefore, $\phi(S)$ and $(\phi^*)^{-1}(T)$ also form a formally dual pair in $G$.
\end{proof}

Now we are ready to introduce the concept of equivalence between formally dual sets.

\begin{definition}\label{def-equiv}
Let $S$ and $S^\pr$ be two formally dual sets in $G$. They are equivalent if there exist $g \in G$ and $\phi \in \Aut(G)$, such that
$$
S^\pr=g\phi(S).
$$
\end{definition}

Let $S$ and $S^{\pr}$ be two equivalent formally dual sets in $G$, such that $S^\pr=g\phi(S)$ for some $g \in G$ and $\phi \in \Aut(G)$. Suppose $S$ and $T$ form a formally dual pair in $G$. Then, by Remark~\ref{rem-trans} and Proposition~\ref{prop-auto}, $S^{\pr}$ and $T^{\pr}=h((\phi^*)^{-1}(T))$ form a formally dual pair in $G$, for each $h \in G$. Thus, $T$ and $T^{\pr}$ are also equivalent. We regard the above two formally dual pairs as equivalent ones. Moreover, the equivalence between these two formally dual pairs can be reduced to the equivalence between two formally dual sets.

For $A \in \Z[G]$, we call the multiset
$$
\{|\chi(A)|^2 \mid \chi \in \wh{G}\}
$$
the \emph{character spectrum} of $A$, and the multiset
$$
\{[AA^{(-1)}]_g \mid g \in G\}
$$
the \emph{difference spectrum} of $A$. Clearly, the character spectrum and difference spectrum are invariants with respect to equivalence. If two formally dual sets have distinct character spectra, or distinct difference spectra, then they are inequivalent.

Finally, we mention the following simple criterion to determine the primitivity of a subset.

\begin{lemma}\label{lem-priset}
$S$ is contained in a coset of a proper subgroup $H$ of $G$ if and only if there exists a nonprincipal character $\chi$, such that $|\chi(S)|^2=|S|^2$.
\end{lemma}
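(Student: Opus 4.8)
The plan is to reduce the whole equivalence to the equality case of the triangle inequality for character sums. First I would record the basic observation that every value $\chi(s)$ is a root of unity, hence of modulus one, so that
$$
|\chi(S)|=\left|\sum_{s \in S}\chi(s)\right| \le \sum_{s \in S}|\chi(s)|=|S|,
$$
with equality if and only if all the summands $\chi(s)$, $s \in S$, point in the same direction, i.e.\ $\chi$ is constant on $S$. Thus $|\chi(S)|^2=|S|^2$ is equivalent to $\chi$ being constant on $S$, and both implications of the lemma become translations of this single fact.

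For the forward direction I would start from the assumption $S \subseteq g_0H$ for a proper subgroup $H \leqslant G$ and some $g_0 \in G$. Since $H$ is proper, the quotient $G/H$ is nontrivial, so $H^{\perp} \cong \wh{G/H}$ contains a nonprincipal character $\chi$. Then for any $s=g_0h \in S$ with $h \in H$ one has $\chi(s)=\chi(g_0)\chi(h)=\chi(g_0)$, so $\chi$ is constant on $S$ and $|\chi(S)|^2=|S|^2$ by the observation above.

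For the converse I would take a nonprincipal $\chi$ with $|\chi(S)|^2=|S|^2$; by the equality condition $\chi$ takes a single value $c$ on $S$. Setting $H=\Ker(\chi)=\{g \in G \mid \chi(g)=1\}$, the nonprincipality of $\chi$ makes $H$ a proper subgroup. Fixing any $s_0 \in S$, for every $s \in S$ we get $\chi(ss_0^{-1})=c\,c^{-1}=1$, hence $ss_0^{-1} \in H$ and $S \subseteq s_0H$, a coset of the proper subgroup $H$.

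I do not expect any serious obstacle here: the only point needing a little care is the equality condition in the triangle inequality together with the degenerate cases (if $S$ is empty or a singleton the statement is vacuous or immediate, the latter because every character is trivially constant on a one-point set). The substantive content is just recognizing that $|\chi(S)|^2=|S|^2$ forces $\chi$ to collapse $S$ onto a single fiber of $\chi$, which is precisely a coset of $\Ker(\chi)$.
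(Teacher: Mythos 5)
Your proof is correct and follows essentially the same route as the paper's: the forward direction picks a nonprincipal character principal on $H$, and the converse uses that $|\chi(S)|^2=|S|^2$ forces all differences $s_1s_2^{-1}$ into $\ker\chi$. You merely make explicit the equality case of the triangle inequality, which the paper's terser proof leaves implicit.
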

\begin{proof}
Suppose $S$ is contained in a coset of a proper subgroup $H$. Then, by choosing a nonprincipal $\chi$ which is principal on $H$, we have $|\chi(S)|^2=|S|^2$. Conversely, assume that $|\chi(S)|^2=|S|^2$ for a nonprincipal character $\chi$. Thus, $\ker \chi$ is a proper subgroup of $G$. Note that $|\chi(S)|^2=|S|^2$ implies that for each $s_1,s_2 \in S$, we have $s_1s_2^{-1} \in \ker \chi$. Therefore, $S$ is contained in a coset of a proper subgroup $\ker \chi$.
\end{proof}

We also have the following criteria to determine the primitivity of formally dual pairs.

\begin{lemma}\label{lem-pri}
Let $S$ and $T$ be a formally dual pair in $G$.
\begin{itemize}
\item[(1)] If for a nonidentity $y \in G \sm \{1\}$, we have $\nu_S(y)=|S|$ or $\nu_T(y)=|T|$, then $S$ and $T$ do not form a primitive formally dual pair.
\item[(2)] Let $H$ be a nontrivial subgroup of $G$. If one of $S$ and $T$ is a union of cosets of $H$, then the other one is contained in a coset of a proper subgroup of $G$. Thus, $S$ and $T$ do not form a primitive formally dual pair.
\end{itemize}
\end{lemma}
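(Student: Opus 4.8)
The plan is to reduce both parts to a single observation about weight enumerators. In our abelian group $G$, the condition $\nu_A(y)=|A|$ is equivalent to the translation invariance $yA=A$. Indeed, since in the pair $(a_1,a_2)$ the second entry is forced to be $a_2=y^{-1}a_1$, one has $\nu_A(y)=|A\cap yA|$, and because $|yA|=|A|$ the equality $\nu_A(y)=|A|$ forces $A\subseteq yA$, hence $A=yA$. Iterating, $A$ is then invariant under the whole cyclic group $\lan y\ran$ and is thus a union of cosets of $\lan y\ran$; conversely, if $A$ is a union of cosets of a subgroup $H$, then $hA=A$ and $\nu_A(h)=|A|$ for every $h\in H$, since $G$ is abelian. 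This dictionary—between a weight enumerator attaining its maximum, translation invariance, and being a union of cosets—is the engine of the whole argument.

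For part (1), suppose $\nu_S(y)=|S|$ for some $y\neq 1$. By the observation above, $S$ is a union of cosets of the nontrivial subgroup $\lan y\ran$, so $S$ violates the second clause in the definition of a primitive subset; hence $S$ is not primitive and the pair cannot be primitive. The case $\nu_T(y)=|T|$ is handled identically with $T$ in place of $S$.

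For part (2), I first invoke the interchangeability of the two sets in a formally dual pair (Remark~\ref{rem-def}(2)) to assume without loss of generality that $S$ is the set that is a union of cosets of the nontrivial subgroup $H$. Choosing any $h\in H$ with $h\neq 1$, the observation gives $\nu_S(h)=|S|$. I then feed this into the defining relation \eqref{eqn-def2}, which expresses the character sum of $T$ in terms of $\nu_S$, obtaining
$$
|\chi_h(T)|^2=\frac{|T|^2}{|S|}\,\nu_S(h)=|T|^2.
$$
Since $h\neq 1$ and $\De$ is an isomorphism, $\chi_h=\De(h)$ is a nonprincipal character, so Lemma~\ref{lem-priset} shows that $T$ is contained in a coset of the proper subgroup $\ker\chi_h$. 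Thus $T$ fails the first clause of primitivity, and the pair is not primitive.

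The argument involves no hard estimate, and the only points requiring care are bookkeeping ones. First, one must apply the defining equation in the correct direction: because we know the structure of $S$ and want to constrain $T$, we use \eqref{eqn-def2} (the relation with $\nu_S$ on the right-hand side), not \eqref{eqn-def}. Second, the passage from $\nu_S(h)=|S|$ to $hS=S$ silently uses commutativity of $G$ to identify left and right cosets, which is legitimate here since all groups are abelian. The main subtlety, such as it is, lies in chaining the three equivalences correctly—union of cosets $\Rightarrow$ maximal weight enumerator $\Rightarrow$ character sum of maximal modulus $\Rightarrow$ containment in a coset via Lemma~\ref{lem-priset}—so that part (2) yields the sharper conclusion that the complementary set lies in a coset, rather than merely the non-primitivity asserted in part (1).
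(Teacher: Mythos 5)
Your proof is correct; the mild interest lies in how you and the paper distribute the same toolkit across the two parts. For part (1), the paper feeds $\nu_S(y)=|S|$ into \eqref{eqn-def2} to get $|\chi_y(T)|^2=|T|^2$ for the nonprincipal character $\chi_y$, and concludes via Lemma~\ref{lem-priset} that $T$ is not primitive; you instead observe that $\nu_S(y)=|A\cap yA|$ evaluated at $A=S$ forces $yS=S$, hence $S$ itself is a union of cosets of $\lan y\ran$ and fails the second clause of primitivity. Both suffice (the pair is primitive only if \emph{both} sets are primitive subsets), but note that the two arguments indict different sets. For part (2), your argument --- pick $h\in H\sm\{1\}$, get $\nu_S(h)=|S|$ from coset-invariance, hence $|\chi_h(T)|^2=|T|^2$ by \eqref{eqn-def2}, then apply Lemma~\ref{lem-priset} --- is essentially the paper's part~(1) argument recycled, and it is sound. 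The paper instead shows $\chi_y(S)=0$ for every $y\notin L=\De^{-1}(H^\perp)$, deduces $\nu_T(y)=0$ there, and concludes that all differences $t_1t_2^{-1}$ lie in $L$, so $T$ sits in a coset of the specific proper subgroup $L$. Your route lands $T$ in a coset of $\ker\chi_h$ for a single well-chosen $h$, which is less precise about \emph{which} subgroup but fully adequate for the statement as written. The only hypotheses you use beyond the paper's are commutativity of $G$ (to identify $yA$ with a union of cosets) and the interchangeability of $S$ and $T$, both of which are available.
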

\begin{proof}
(1) Without loss of generality, assume that $\nu_S(y)=|S|$, where $y \in G \sm \{1\}$. By \eqref{eqn-def2}, $|\chi_y(T)|^2=|T|^2$ for a nonprincipal $\chi_y \in \wh{G}$. By Lemma~\ref{lem-priset}, $T$ is not a primitive subset. Therefore $S$ and $T$ do not form a primitive formally dual pair.

(2) Let $\De: G \rightarrow \wh{G}$ be a group isomorphism, such that $\De(y)=\chi_y$ for each $y \in G$. Without loss of generality, assume that $S$ is a union of cosets of a nontrivial subgroup $H$. Let $L=\De^{-1}(H^\perp)$, which is a proper subgroup of $G$. Therefore, for each $y \in G \sm L$, the character $\chi_y$ is nonprincipal on $H$. Note that $S$ is a union of cosets of $H$. For any $y \in G \sm L$, we have $\chi_y(S)=0$, which implies $\nu_T(y)=0$. Thus, for any $t_1, t_2 \in T$, we have $t_1t_2^{-1} \in L$. This means that $T$ is contained in a coset of a proper subgroup $L$ and is therefore not a primitive subset. Thus, $S$ and $T$ do not form a primitive formally dual pair.
\end{proof}

\section{Constructions of primitive formally dual pairs}\label{sec3}

\subsection{Preliminaries}

In this subsection, we introduce the Fourier inversion formula. For a more detailed treatment, please refer to \cite{Pott95}.

Let $\wh{G}$ denote the character group of a group~$G$. Each character $\chi \in \wh{G}$ is extended linearly to the group ring $\Z[G]$. Given $A \in \Z[G]$, the following proposition shows that the character values of $A$ uniquely determine $A$.

\begin{proposition}[Fourier inversion formula] \label{prop-fourier}
Let $G$ be a group and let $A=\sum_{g \in G} a_gg \in \Z[G]$. Then for each $g \in G$, we have
$$
a_g=\frac{1}{|G|}\sum_{\chi \in \wh{G}} \chi(A)\ol{\chi(g)}.
$$
\end{proposition}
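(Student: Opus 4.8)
The plan is to expand the right-hand side using the definition of $\chi(A)$, interchange the two (finite) summations, and reduce everything to the orthogonality relation for the characters of $G$. First I would write $\chi(A)=\sum_{h \in G}a_h\chi(h)$ and substitute, obtaining
$$
\frac{1}{|G|}\sum_{\chi \in \wh{G}}\chi(A)\ol{\chi(g)}=\frac{1}{|G|}\sum_{h \in G}a_h\sum_{\chi \in \wh{G}}\chi(h)\ol{\chi(g)}.
$$
Since every character of a finite group takes values in the roots of unity, we have $\ol{\chi(g)}=\chi(g)^{-1}=\chi(g^{-1})$, and because $\chi$ is a homomorphism, $\chi(h)\chi(g^{-1})=\chi(hg^{-1})$. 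Thus the inner sum becomes $\sum_{\chi \in \wh{G}}\chi(hg^{-1})$, and the whole computation is reduced to evaluating $\sum_{\chi \in \wh{G}}\chi(k)$ for an arbitrary $k \in G$.

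The key step is then the orthogonality relation: $\sum_{\chi \in \wh{G}}\chi(k)$ equals $|G|$ when $k=1$ and vanishes otherwise. When $k=1$ every character contributes $1$, so the sum is $|\wh{G}|=|G|$, using that $G$ and $\wh{G}$ have the same order. When $k \neq 1$, I would invoke the existence of a character $\psi \in \wh{G}$ with $\psi(k) \neq 1$; multiplying the sum by $\psi(k)$ and reindexing $\chi \mapsto \psi\chi$ (a bijection of $\wh{G}$) leaves the sum unchanged, so $(\psi(k)-1)\sum_{\chi \in \wh{G}}\chi(k)=0$, forcing $\sum_{\chi \in \wh{G}}\chi(k)=0$. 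Substituting back into the displayed identity, only the term $h=g$ survives in the outer sum, contributing $a_g\cdot|G|$; dividing by $|G|$ yields $a_g$, as claimed.

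The main obstacle is really just the $k \neq 1$ half of the orthogonality relation, which hinges on producing a character $\psi$ that separates $k$ from the identity. This is where the finite abelian structure enters: decomposing $G$ into cyclic factors (as already done in the paper) and taking a character nontrivial on the factor in which $k$ has a nonzero coordinate supplies the required $\psi$; equivalently one cites the standard fact that $\wh{G}$ separates the points of $G$. Everything else is routine bookkeeping, relying only on the interchangeability of finite sums and on characters being multiplicative and unimodular.
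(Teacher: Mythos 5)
Your proof is correct: it is the standard argument via expanding $\chi(A)$, interchanging the finite sums, and invoking the orthogonality relation $\sum_{\chi\in\wh{G}}\chi(k)=|G|$ for $k=1$ and $0$ otherwise, with the $k\neq 1$ case handled by the separating-character trick. The paper itself states this proposition without proof (deferring to the reference \cite{Pott95}), and your argument is exactly the classical one found there, so there is nothing to compare beyond noting that you have supplied the routine details the paper omits.
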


Let $\De: G \rightarrow \wh{G}$ be an isomorphism with $\De(y)=\chi_y$ for each $y \in G$. For each subgroup $N$ of $G$, define  $\wti{N}:=\wti{N}(\De)=\De^{-1}(N^\perp)$. Therefore, $\wti{N}=\{y \in G \mid \mbox{$\chi_y$ principal on $N$}\}$. Note that $\wti{N}$ depends on the specific choice of the group isomorphism $\De$. We assume that a proper group isomorphism is chosen, whenever we mention $\wti{N}$ below.

Let $N$ be a normal subgroup of $G$ and $\rho: G \rightarrow G/N$ be the natural projection. Each character $\chi \in \wh{G}$ principal on $N$ induces a character $\wti{\chi} \in \wh{G/N}$, such that $\wti{\chi}(\rho(g))=\chi(g)$ for each $g \in G$. Conversely, each $\wti{\chi} \in \wh{G/N}$ induces a \emph{lifting character} $\chi \in \wh{G}$ satisfying $\chi(g)=\wti{\chi}(\rho(g))$ for every $g \in G$.

Finally, we mention a product construction of formally dual pairs.

\begin{proposition}{\rm (Product construction) \cite[Lemma 3.1]{CKRS}}\label{prop-prod}
Let $S_1$ and $T_1$ be a formally dual pair in $G_1$. Let $S_2$ and $T_2$ be a formally dual pair in $G_2$. Then $S_1 \times S_2$ and $T_1 \times T_2$ form a formally dual pair in $G_1 \times G_2$.
\end{proposition}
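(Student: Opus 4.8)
The plan is to exploit the multiplicativity of both sides of the defining equation \eqref{eqn-def} under direct products, so that the product construction reduces to simply multiplying together the two hypotheses. First I would fix the isomorphisms $\De_1: G_1 \rightarrow \wh{G_1}$ and $\De_2: G_2 \rightarrow \wh{G_2}$ witnessing the two given formally dual pairs, and form the product isomorphism $\De=\De_1 \times \De_2$ from $G_1 \times G_2$ to $\wh{G_1} \times \wh{G_2}$. Since the character group of a direct product is canonically the direct product of the character groups, this $\De$ is a legitimate isomorphism $G_1 \times G_2 \rightarrow \wh{G_1 \times G_2}$, and the associated character $\chi_{(y_1,y_2)}$ acts by $\chi_{(y_1,y_2)}(g_1,g_2)=\chi_{y_1}(g_1)\chi_{y_2}(g_2)$. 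This is the identification with respect to which I would verify \eqref{eqn-def}.

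The two key factorization facts I would record next are: (i) the character sum splits, $\chi_{(y_1,y_2)}(S_1 \times S_2)=\chi_{y_1}(S_1)\cdot\chi_{y_2}(S_2)$, because summing a product character over a product set separates into a product of sums; and (ii) the weight enumerator splits, $\nu_{T_1 \times T_2}(y_1,y_2)=\nu_{T_1}(y_1)\nu_{T_2}(y_2)$, because a difference $(t_1,t_2)(t_1^\pr,t_2^\pr)^{-1}$ equals $(y_1,y_2)$ exactly when $t_1 (t_1^\pr)^{-1}=y_1$ and $t_2 (t_2^\pr)^{-1}=y_2$ independently, so the pair count multiplies. Both are one-line computations.

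With these in hand, I would simply multiply the two hypotheses: taking absolute squares in (i) and inserting $|\chi_{y_1}(S_1)|^2=\frac{|S_1|^2}{|T_1|}\nu_{T_1}(y_1)$ together with the analogous identity for the second factor gives
\[
|\chi_{(y_1,y_2)}(S_1 \times S_2)|^2=\frac{|S_1|^2|S_2|^2}{|T_1||T_2|}\,\nu_{T_1}(y_1)\nu_{T_2}(y_2).
\]
Now using $|S_1 \times S_2|=|S_1||S_2|$, $|T_1 \times T_2|=|T_1||T_2|$ and fact (ii), the right-hand side is exactly $\frac{|S_1 \times S_2|^2}{|T_1 \times T_2|}\nu_{T_1 \times T_2}(y_1,y_2)$, which is \eqref{eqn-def} for the pair $(S_1 \times S_2,\,T_1 \times T_2)$. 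To obtain \eqref{eqn-def2} I would either invoke the interchangeability of $S$ and $T$ noted in Remark~\ref{rem-def}(2), or repeat the identical argument with the roles of the $S_i$ and $T_i$ swapped.

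There is no serious obstacle here: the entire content is the observation that character sums, weight enumerators, and cardinalities are all multiplicative across a direct product, so the two scalar identities combine term by term. The only point demanding a little care is the bookkeeping in the first step, namely ensuring that the product isomorphism $\De_1 \times \De_2$ is precisely the map making \eqref{eqn-def} and \eqref{eqn-def2} meaningful for the product group; but this is routine once the canonical identification $\wh{G_1 \times G_2}$ with $\wh{G_1} \times \wh{G_2}$ is set up.
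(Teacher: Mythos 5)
Your argument is correct and is the standard multiplicativity proof of the product construction; the paper itself gives no proof here, simply citing \cite[Lemma 3.1]{CKRS}, and your computation (characters, weight enumerators, and cardinalities all factor over a direct product, after identifying $\wh{G_1 \times G_2}$ with $\wh{G_1} \times \wh{G_2}$ via $\De_1 \times \De_2$) is exactly what that reference's proof amounts to. The final appeal to Remark~\ref{rem-def}(2) for \eqref{eqn-def2}, or equivalently repeating the argument with the roles swapped, is also fine.
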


As we shall see, this product construction provides a powerful approach to generate inequivalent primitive formally dual pairs in the same group.

\subsection{Construction from relative difference sets}

We observe that the primitive formally dual sets $S$ and $T$ in Proposition~\ref{prop-con}(3), are simply some well-studied configuration called relative difference sets (RDSs). Next, we introduce some basic knowledge about RDSs.  For a more detailed treatment, please refer to \cite{Pott95}.

\begin{definition}\label{def-RDS}
Let $G$ be a group of order $mn$ and $N$ be a subgroup of order $n$. A $k$-subset $R \subset G$ is an $(m,n,k,\la)$ relative difference set in $G$ relative to $N$, if
$$
RR^{(-1)}=k+\la(G-N).
$$
Equivalently, $R \subset G$ is an $(m,n,k,\la)$ relative difference set in $G$, if
$$
|\chi(R)|^2=\begin{cases}
  k^2 & \mbox{if $\chi$ is principal,} \\
  k-\la n & \mbox{if $\chi$ is nonprincipal, but principal on $N$,} \\
  k &\mbox{if $\chi$ is nonprincipal on $N$.}
\end{cases}
$$
\end{definition}

We notice that the primitive formally dual sets $S$ and $T$ in Proposition~\ref{prop-con}(3), are both $(p,p,p,1)$-RDSs in $\Z_p \times \Z_p$. Therefore, we are particularly interested in $(n,n,n,1)$-RDSs, which have been intensively studied.

\begin{proposition}
Let $R$ be an $(n,n,n,1)$-RDS in a group $G$ relative to a subgroup $N$.
\begin{itemize}
\item[(1)] {\rm \cite[Result 5.4.1]{Pott95}} If $n$ is even, then $n=2^m$, the group $G$ is isomorphic to $\Z_4^m$ and $N$ is isomorphic to $2\Z_4^m$.
\item[(2)] {\rm \cite[Theorem 1.1]{BJS}} If $n$ is odd, then $n=p^m$ for some odd prime $p$.
\end{itemize}
\end{proposition}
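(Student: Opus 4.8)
Both parts rest on the same two elementary facts about an $(n,n,n,1)$-RDS $R$. First, since Definition~\ref{def-RDS} gives $RR^{(-1)}=n+(G-N)$, the coefficient $[RR^{(-1)}]_g$ vanishes for every $g\in N\sm\{1\}$; hence no two distinct elements of $R$ differ by an element of $N$, and as $|R|=n=[G:N]$ the set $R$ is a transversal of $N$ in $G$. Second, every character $\chi$ that is nonprincipal on $N$ satisfies $|\chi(R)|^2=n$, so $\chi(R)$ is an algebraic integer of absolute value $\sqrt{n}$ inside $\Z[\zeta_e]$, where $e=\exp(G)$. Throughout I work in the abelian setting used in the rest of the paper. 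The plan is to exploit these constraints, and the reason the statement splits into (1) and (2) is that the number theory of representing $n=\chi(R)\ol{\chi(R)}$ behaves very differently at the prime $2$ and at odd primes.

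For part~(1) I would first extract structure from a parity count on involutions. If $g\in G\sm N$ has order $2$ and $r(r')^{-1}=g$ for some $r,r'\in R$, then also $r'(r)^{-1}=g^{-1}=g$, and $(r,r')\neq(r',r)$ because $g\neq 1$ forces $r\neq r'$; thus $g$ would be represented an even number of times, contradicting $[RR^{(-1)}]_g=1$. Hence $G$ has no involution outside $N$, i.e. $G[2]\leqslant N$. The next step is to feed $|\chi(R)|^2=n$ for $\chi$ of $2$-power order into the arithmetic of $\Z[\zeta_{2^k}]$: the rigidity of writing $n$ as $\chi(R)\ol{\chi(R)}$ should force the odd part of $n$ to be trivial (so $n=2^m$), force $\exp(G)\mid 4$, and force $G/N$ to be elementary abelian. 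Combined with $G[2]\leqslant N$ and $|G|=n^2=4^m$, this would pin the extension down to $G\cong\Z_4^m$ with $N=2\Z_4^m=G[2]=G^2$. The hard part here is this final identification: several abelian $2$-groups of order $4^m$ satisfy the involution bound, so singling out $\Z_4^m$ needs the full character-sum rigidity rather than the elementary pairing argument alone.

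For part~(2) the heart of the matter is genuinely number-theoretic, and I would not expect an elementary argument to suffice. Assuming $n$ odd, suppose for contradiction that $n$ has at least two distinct prime divisors. Fixing a character $\chi$ nonprincipal on $N$, the relation $\chi(R)\ol{\chi(R)}=n$ in $\Z[\zeta_e]$ constrains the prime-ideal factorization of $(\chi(R))$: for each rational prime $p\mid n$ one reads off the $\mathfrak{p}$-adic valuations at the primes $\mathfrak{p}$ lying above $p$. The strategy is the field-descent method, passing from $\Q(\zeta_e)$ to a subfield in which these valuation constraints become tight and then showing that a solution with two distinct prime divisors forces incompatible valuations, leaving only $n=p^m$. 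I would organize this by treating the self-conjugate case first, where the factorization of $n$ can be read off directly, and then the general case by descent. The principal obstacle is precisely the non-self-conjugate case: when the primes dividing $n$ are not self-conjugate modulo $e$ the character sum does not split cleanly, and ruling out every residual configuration requires the full strength of the field-descent bounds.
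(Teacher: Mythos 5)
This proposition is stated in the paper without proof: both parts are quoted from the literature (part (1) from \cite[Result 5.4.1]{Pott95}, part (2) from \cite[Theorem 1.1]{BJS}), so there is no in-paper argument to compare against. Judged on its own terms, your proposal is not a proof but a plan, and the plan leaves precisely the substantive claims unestablished. The two elementary observations you do carry out are correct: $R$ is a transversal of $N$ because $[RR^{(-1)}]_g=0$ for $g\in N\sm\{1\}$ and $|R|=n=[G:N]$, and the pairing $(r,r')\leftrightarrow(r',r)$ shows that no element of order $2$ outside $N$ can have $[RR^{(-1)}]_g=1$, whence $G[2]\leqslant N$. But everything after that is deferred. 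In part (1), the statements that the odd part of $n$ is trivial, that $\exp(G)\mid 4$, and that $G/N$ is elementary abelian are exactly the content of the cited result, and ``the rigidity of writing $n$ as $\chi(R)\ol{\chi(R)}$ should force'' them is not an argument: the equation $|\chi(R)|^2=n$ in $\Z[\zeta_{2^k}]$ by itself does not rule out odd factors of $n$ or large exponent without a Turyn/Ma-type ideal-theoretic analysis, which you name but do not perform. (It is true that \emph{once} $n=2^m$ and $\exp(G)\mid 4$ are known, your involution bound $|G[2]|\le|N|=2^m$ combined with $|G|=4^m$ does pin down $G\cong\Z_4^m$ and $N=G[2]=2\Z_4^m$; that closing step is fine, but the two hypotheses feeding it are the hard part.)

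Part (2) is in worse shape. The claim that $n$ is a prime power is equivalent to the prime power conjecture for projective planes of order $n$ admitting an abelian collineation group of order $n^2$, which is the main theorem of \cite{BJS}; its proof is a substantial piece of work combining Schmidt's field-descent exponent bounds with further combinatorial input, not a routine valuation count. Your sketch correctly identifies the self-conjugate case as the easy one and the non-self-conjugate case as the obstacle, but ``ruling out every residual configuration requires the full strength of the field-descent bounds'' is an acknowledgement of the gap rather than a way across it. As written, neither part of the proposal establishes the proposition; if your intent was to cite the known results rather than reprove them, that is what the paper does, and the sketch should say so explicitly instead of presenting an incomplete derivation.
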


We observe that when $p$ is odd, all \emph{known} $(p^m,p^m,p^m,1)$-RDSs live in the elementary abelian group $\Zp^{2m}$. Below, we give two examples of $(n,n,n,1)$-RDSs. Note that the definition of Teichm\"uller set can be found at the beginning of the next subsection.

\begin{example}\label{exam-RDScon}
\quad
\begin{itemize}
\item[(1)] {\rm \cite[Theorem 1(b)]{BD}} Let $R$ be the Galois ring $\GR(4,m)$, whose additive group is $\Z_4^m$. Let $\cT$ be the Teichm\"uller set of $R$. Then $\cT$ is a $(2^m,2^m,2^m,1)$-RDS in $\Z_4^m$ relative to $2\Z_4^m$.
\item[(2)] {\rm \cite[Theorem 3.13]{Pott16}} Let $H_1$ and $H_2$ be two groups. A function $f:H_1 \rightarrow H_2$ is \emph{planar}, if the mapping $f(ax)f(x)^{-1}$ is bijective for each nonidentity $a$. Let $p$ be an odd prime. Let $G$ be the elementary abelian group $\Z_p^{2m}$ and $H$ be a subgroup of $G$ isomorphic to $\Z_p^m$. Let $f$ be a planar function from $H$ to $H$. Then, $\{(x,f(x)) \mid x \in \Z_{p^m}\}$ is a $(p^m,p^m,p^m,1)$-RDS in $G=H \times H$ relative to $\{0\} \times H$. We regard the elementary abelian group $\Z_p^m$ as the additive group of the finite field $\Fpm$, the multiplication among elements of $\Z_p^m$ is well-defined. Note that $f(x)=x^2$ is a planar function from $\Z_p^m$ to $\Z_p^m$. Hence, $\{(x,x^2) \mid x \in \Z_p^m\}$ is a $(p^m,p^m,p^m,1)$-RDS in $\Z_p^m \times \Z_p^m$ relative to $\{0\}\times \Z_p^m$.
\end{itemize}
\end{example}

\begin{remark}
\quad
\begin{itemize}
\item[(1)] Example~\ref{exam-RDScon}(1) has been generalized in \cite{Zhou} using planar functions over characteristic $2$.
\item[(2)] The above examples can be rephrased using the language of presemifields (see \cite[Construction 8.8]{Pott16}). There are many known presemifields which produce numerous $(n,n,n,1)$-RDSs \cite{Kan}, \cite[Sections 8.2, 8.3]{Pott16}.
\end{itemize}
\end{remark}

Let $\De: G \rightarrow \wh{G}$ be an isomorphism with $\De(y)=\chi_y$ for each $y \in G$. Recall that for each subgroup $N$ of $G$, the subgroup $\wti{N}=\{y \in G \mid \mbox{$\chi_y$ principal on $N$}\}$. Thus, we have $y=1$ if and only if $\chi_y$ is principal on $G$, the element $y \in \wti{N} \setminus \{1\}$ if and only if $\chi_y$ is nonprincipal, but principal on $N$, the element $y \in G \sm \wti{N}$ if and only if $\chi_y$ is not principal on $N$. Now, we are ready to extend the construction in Proposition~\ref{prop-con}(3) from the viewpoint of RDSs.

\begin{theorem}\label{thm-RDS}
Let $S$ be an $(n,n,n,1)$-RDS in $G$ relative to $N$. Then $S$ and $T$ form a primitive formally dual pair in $G$, if and only if \,$T$ is an $(n,n,n,1)$-RDS in $G$ relative to $\wti{N}$. More concretely,
\begin{itemize}
\item[(1)] When $n$ is even, let $S$ and $T$ be two $(2^m,2^m,2^m,1)$-RDSs in $\Z_4^m$ relative to $2\Z_4^m$. Then $S$ and $T$ form a primitive formally dual pair in $\Z_4^m$. In particular, let $R$ be the Galois ring $\GR(4,m)$ and $\cT$ be the Teichm\"uller set of $R$. Then $S=\cT$ and $T=\cT$ form a primitive formally dual pair in $\Z_4^m$.
\item[(2)] When $n$ is odd, $H=\Z_p^m$. Let $f_1$ and $f_2$ be two planar functions from $H$ to $H$. Then, $S=\{(x,f_1(x)) \mid x \in H\}$ and $T=\{(f_2(x),x) \mid x \in H\}$  form a primitive formally dual pair in $\Z_p^{2m}$.
\end{itemize}
\end{theorem}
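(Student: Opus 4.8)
The plan is to read everything off the character spectrum of an $(n,n,n,1)$-RDS. Since $S$ is an $(n,n,n,1)$-RDS relative to $N$, Definition~\ref{def-RDS} combined with the dictionary recalled just before the theorem ($y=1 \Leftrightarrow \chi_y$ principal; $y \in \wti{N}\sm\{1\} \Leftrightarrow \chi_y$ nonprincipal but principal on $N$; $y \in G \sm \wti{N} \Leftrightarrow \chi_y$ nonprincipal on $N$) pins down
\[
|\chi_y(S)|^2=\begin{cases} n^2 & y=1,\\ 0 & y \in \wti{N}\sm\{1\},\\ n & y \in G\sm\wti{N}.\end{cases}
\]
First I would record that $\wti{N}=\De^{-1}(N^\perp)$ is a subgroup of $G$ with $|\wti{N}|=|G|/|N|=n$, so that ``$(n,n,n,1)$-RDS relative to $\wti{N}$'' is meaningful.

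For the forward direction, suppose $S$ and $T$ form a primitive formally dual pair. Then Proposition~\ref{prop-non}(1) forces $|T|=n$, so \eqref{eqn-def} becomes $\nu_T(y)=\tfrac1n|\chi_y(S)|^2$. Substituting the spectrum above gives $\nu_T(y)=n,0,1$ according as $y=1$, $y\in\wti{N}\sm\{1\}$, $y\in G\sm\wti{N}$, which is precisely $TT^{(-1)}=n+(G-\wti{N})$; since $|T|=|\wti{N}|=n$, this says $T$ is an $(n,n,n,1)$-RDS relative to $\wti{N}$. Conversely, if $T$ is such an RDS, then $|T|=n$ and $\nu_T(y)=[TT^{(-1)}]_y$ takes the same values $n,0,1$, so $\tfrac{|S|^2}{|T|}\nu_T(y)=n\,\nu_T(y)$ matches $|\chi_y(S)|^2$ for every $y$; hence \eqref{eqn-def} holds, and by Remark~\ref{rem-def}(2) so does \eqref{eqn-def2}, giving a formally dual pair. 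Primitivity is then free: both RDS spectra attain the value (square of the size) $n^2$ only at the principal character (as soon as $n>1$), so by Lemma~\ref{lem-priset} neither $S$ nor $T$ lies in a coset of a proper subgroup, and Remark~\ref{rem-def}(1) upgrades this to primitivity of the pair. The degenerate case $n=1$ is the trivial configuration.

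For the concrete statements I would fix the natural inner-product isomorphism $\De$ and compute $\wti{N}$. In $\Z_4^m$ with $N=2\Z_4^m$, a character $\chi_y$ is principal on $N$ exactly when every coordinate of $y$ is even (the inner product lands in $2\Z$ modulo $4$), whence $\wti{N}=N$; thus part (1) is the special case in which $S$ and $T$ are both $(2^m,2^m,2^m,1)$-RDSs relative to $2\Z_4^m$, and $S=T=\cT$ works by Example~\ref{exam-RDScon}(1). In $\Z_p^{2m}=H\times H$ with $N=\{0\}\times H$, the same computation yields $\wti{N}=H\times\{0\}$. Here $S=\{(x,f_1(x))\mid x\in H\}$ is an RDS relative to $N$ by Example~\ref{exam-RDScon}(2), and I would verify that $T=\{(f_2(x),x)\mid x\in H\}$ is an RDS relative to $\wti{N}=H\times\{0\}$: either directly, since planarity of $f_2$ makes each difference $(f_2(x+a)-f_2(x),a)$ with $a\neq0$ occur exactly once, or by applying the coordinate-swap automorphism to the RDS $\{(x,f_2(x))\}$ and using that automorphisms carry RDSs to RDSs relative to the image subgroup. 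The main equivalence then finishes both parts.

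The equivalence itself is essentially spectral bookkeeping, so the only genuinely non-automatic point is matching the relevant relative subgroup to $\wti{N}$ in the concrete cases. The hard part will be the odd case, where $T$ is taken relative to $H\times\{0\}$ rather than $\{0\}\times H$: one must confirm that this is exactly $\wti{N}$ for the chosen $\De$, the swap from $N$ to $\wti{N}$ being dictated by the inner product. In the even case the corresponding subtlety is the evenness of the inner product modulo $4$, which is what collapses $\wti{N}$ back onto $N$.
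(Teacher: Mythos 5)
Your proposal is correct and follows essentially the same route as the paper: read off the character spectrum of the RDS $S$, use Proposition~\ref{prop-non}(1) to get $|T|=n$, translate \eqref{eqn-def} into the statement that $T$ is an $(n,n,n,1)$-RDS relative to $\wti{N}$, and compute $\wti{N}$ explicitly under the natural inner-product isomorphism for the two concrete families. Your additional care with the converse direction, with primitivity via Lemma~\ref{lem-priset}, and with checking that $\{(f_2(x),x)\mid x\in H\}$ is an RDS relative to $H\times\{0\}$ merely makes explicit what the paper leaves implicit.
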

\begin{proof}
Since $S$ is an $(n,n,n,1)$-RDS in $G$ relative to $N$, by definition,
$$
|\chi_y(S)|^2=\begin{cases}
  n^2 & \mbox{if $y=0$,} \\
  0  & \mbox{if $y \in \wti{N} \sm \{0\}$,} \\
  n  & \mbox{if $y \in G \sm \wti{N}$.}
\end{cases}
$$
By Proposition~\ref{prop-non}(1), we have $|T|=n$. By Definition~\ref{def-iso}, $S$ and $T$ form a formally dual pair if and only if
$$
\nu_T(y)=\begin{cases}
  n & \mbox{if $y=0$,} \\
  0  & \mbox{if $y \in \wti{N} \sm \{0\}$,} \\
  1  & \mbox{if $y \in G \sm \wti{N}$,}
\end{cases}
$$
which means that $T$ is an $(n,n,n,1)$-RDS in $G$ relative to $\wti{N}$. Since both $S$ and $T$ are RDSs, each of them cannot be contained in a coset of a proper subgroup of $G$. Thus, $S$ and $T$ form a primitive formally dual pair and therefore, the necessary and sufficient condition is proved.

When $n$ is even, we choose $G=\Z_4^m$. Let $S$ be a $(2^m,2^m,2^m,1)$-RDS in $G$ relative to $N=2\Z_4^{m}$. For $y \in \Z_4^m$, define a character $\chi_y(a)=(\sqrt{-1})^{y\cdot a}$ for each $a \in \Z_4^m$, where $\cdot$ represents the usual inner product in $\Z_4^m$. Let $\De: y \mapsto \chi_y$ be a group isomorphism from $G$ to $\wh{G}$. Then, we have $\wti{N}=\wti{N}(\De)=2\Z_4^m$. Note that $T$ is a $(2^m,2^m,2^m,1)$-RDS in $G$ relative to $\wti{N}=2\Z_4^{m}$. Then, $S$ and $T$ form a primitive formally dual pair in $\Z_4^m$. In particular, by Example~\ref{exam-RDScon}(1), we can choose $S=T=\cT$.

When $n$ is odd, we choose $G=\Z_p^{2m}$, such that $G=H \times H$. By Example~\ref{exam-RDScon}(2), $S$ is a $(p^m,p^m,p^m,1)$-RDS in $G$ relative to $N=\{0\} \times H$. Each $y \in \Z_p^{2m}$ can be written as $y=(y_1,y_2) \in \Z_p^m \times \Z_p^m$. For $y=(y_1,y_2) \in \Z_p^m \times \Z_p^m$, define a character $\chi_{y_1,y_2}(a_1,a_2)=\zp^{y_1 \cdot a_1+y_2 \cdot a_2}$ for each $(a_1,a_2) \in \Z_p^m \times \Z_p^m$, where $\zp$ is a primitive $p$-th root of unity and $\cdot$ represents the usual inner product in $\Z_p^m$. Let $\De^{\pr}: (y_1,y_2) \mapsto \chi_{y_1,y_2}$ be a group isomorphism from $G$ to $\wh{G}$. Then, we have $\wti{N}=\wti{N}(\De^{\pr})=H \times \{0\}$. Note that by Example~\ref{exam-RDScon}(2), $T$ is a $(p^m,p^m,p^m,1)$-RDS in $G$ relative to $\wti{N}=H \times \{0\}$. Then $S$ and $T$ form a primitive formally dual pair in $\Z_p^{2m}$.
\end{proof}

\begin{remark}
\quad
\begin{itemize}
\item[(1)] Let $p$ be an odd prime. By Example~\ref{exam-RDScon}(2), $f(x)=x^2$ is a planar function from $\Z_p$ to $\Z_p$. Applying Theorem~\ref{thm-RDS}(2), we recover the primitive formally dual pairs in Proposition~\ref{prop-con}(3).
\item[(2)] Theorem~\ref{thm-RDS} indicates that given an $(n,n,n,1)$-RDS $S$ in $G$ relative to $N$, every $(n,n,n,1)$-RDS $T$ in $G$ relative to $\wti{N}$ can be used to form a primitive formally dual pair in $G$. We note there are many inequivalent $(n,n,n,1)$-RDS (see \cite{Kan} and \cite[Sections 8.2, 8.3]{Pott16}). Consequently, given a primitive formally dual set $S$, there can be more than one primitive formally dual sets $T$, which are pairwise inequivalent, so that $S$ and $T$ form a primitive formally dual pair in $G$.
\end{itemize}
\end{remark}

The following two examples show that by applying the product construction to $(n,n,n,1)$-RDSs and products of the TITO configurations, we can obtain many inequivalent primitive formally dual pairs in one group.

\begin{example}
Let $m \ge 2$. We aim to show that there are at least $m+1$ inequivalent primitive formally dual pairs in $G=\Z_4^m$. Write $m=m_1+m_2$, where $0 \le m_1, m_2 \le m$. Let $S_1=T_1$ be a product of $m_1$ copies of the TITO configuration in $\Z_4^{m_1}$ and $S_2=T_2$ be a $(2^{m_2},2^{m_2},2^{m_2},1)$-RDS in $\Z_4^{m_2}$. Then by the product construction, $S_1 \times S_2$ and $T_1 \times T_2$ form a formally dual pair in $\Z_4^m=\Z_4^{m_1} \times \Z_4^{m_2}$, such that
$$
\{|\chi(S_1 \times S_2)|^2 \mid \chi \in \wh{G}\}=\{|\chi(T_1 \times T_2)|^2 \mid \chi \in \wh{G}\}=\{0,2^{m+s},2^{m+m_2+s} \mid 0 \le s \le m_1\}.
$$
Since the multiplicity of $4^m$ is $1$, by Lemma~\ref{lem-priset}, $S_1 \times S_2$ and $T_1 \times T_2$ form a primitive formally dual pair in $G$. Moreover, since the multiplicity of $0$ is equal to $4^m-3^{m_1}(4^{m_2}-2^{m_2}+1)$, distinct choices of $0 \le m_1 \le m$ yield distinct character spectra of $S_1 \times S_2$ and $T_1 \times T_2$. Thus, there are at least $m+1$ inequivalent primitive formally dual pairs in $\Z_4^m$.
\end{example}

\begin{example}
Let $p$ be an odd prime and $m \ge 2$. We aim to show that there are at least $\lf \frac{m}{2} \rf+1$ inequivalent primitive formally dual pairs in $G=\Z_p^{2m}$. Write $m=m_1+m_2$, where $0 \le m_1 \le m_2 \le m$. Let $S_1=T_1$ be a $(p^{m_1},p^{m_1},p^{m_1},1)$-RDS in $\Z_p^{2m_1}$ and $S_2=T_2$ be a $(p^{m_2},p^{m_2},p^{m_2},1)$-RDS in $\Z_p^{2m_2}$. Then by the product construction, $S_1 \times S_2$ and $T_1 \times T_2$ form a formally dual pair in $\Z_p^{2m}=\Z_p^{2m_1} \times \Z_p^{2m_2}$, such that
$$
\{|\chi(S_1 \times S_2)|^2 \mid \chi \in \wh{G}\}=\{|\chi(T_1 \times T_2)|^2 \mid \chi \in \wh{G}\}=\{0,p^{m},p^{m+m_1},p^{m+m_2},p^{2m}\}.
$$
Since the multiplicity of $p^{2m}$ is equal to $1$, by Lemma~\ref{lem-priset}, $S_1 \times S_2$ and $T_1 \times T_2$ form a primitive formally dual pair in $G$. Moreover, distinct choices of $0 \le m_1 \le \lf \frac{m}{2} \rf$ yield distinct character spectra of $S_1 \times S_2$ and $T_1 \times T_2$. Thus, there are at least $\lf \frac{m}{2} \rf+1$ inequivalent primitive formally dual pairs in $\Zp^{2m}$.
\end{example}

\subsection{Construction from generalized relative difference sets}

In this subsection, we aim to extend the construction in Proposition \ref{prop-con}(4). Although the primitive formally dual sets in that construction are not RDSs any more, we realize they are indeed some natural extensions of RDSs. Next, we introduce the concept of generalized relative difference sets (GRDSs), which live in the products of Galois rings. For this purpose, we first recall some basic facts about Galois rings. For a more detailed treatment, please refer to \cite[Chapter 14]{Wan}.

Let $p$ be a prime and $t$ be a positive integer. Let $f \in \Z_{p^t}[x]$ be a monic polynomial of degree $s$ such that the image of $f$ under the natural projection $\Z_{p^t}[x] \rightarrow \Zp[x]$ is irreducible over $\Zp$. Then the quotient ring $\Z_{p^t}[x]/(f(x))$ is called a Galois ring of characteristic $p^t$ and rank $s$, denoted by $\GR(p^t,s)$. For $0 \le i \le t$, define $(p^i)=p^i\GR(p^t,s)$. The Galois ring $\GR(p^t,s)$ contains a chain of principal ideals $\{0\}=(p^t) \subset (p^{t-1}) \subset \cdots \subset (p) \subset \GR(p^t,s)$. In this section, we always use $R$ to denote the Galois ring $\GR(p^t,s)$. For $0 \le i \le t-1$, define $R_i=(p^i) \sm (p^{i+1})$, and $R_t=\{0\}$. Then $\{R_i \mid 0 \le i \le t\}$ forms a partition of $R$. For each $a \in R$, define $v_p(a)=i$, if $a \in R_i$, where $0 \le i \le t$.

The group of units $R_0$ contains a unique cyclic subgroup of order $p^s-1$, which is denoted by $\cT^*$. The Teichm\"uller set of $R$ is $\cT=\cT^* \cup \{0\}$. Each $x \in R$ can be uniquely expressed as $x=\sum_{i=0}^{t-1} p^ix_i$, where $x_i \in \cT$ for each $0 \le i \le t-1$. The Frobenius automorphism $\sig$ of the Galois ring $R$ is defined to be
\begin{align*}
\sig: R &\rightarrow R, \\
        \sum_{i=0}^{t-1} p^i x_i &\mapsto \sum_{i=0}^{t-1} p^i x_i^p,
\end{align*}
where $x_i \in \cT$ for each $0 \le i \le t-1$. For each $x \in R$, the trace function from $R$ to $\Z_{p^t}$ is defined as
$$
\Tr(x)=\sum_{i=0}^{s-1} \sig^{i}(x).
$$

For each $a \in R$, we can define a character $\chi_a$ from the additive group of $R$ to the multiplicative group of the complex field $\C$, such that $\chi_a(x)=\zpt^{\Tr(ax)}$ for each $x \in R$, where $\zpt$ is a primitive $p^t$-th root of unity. Moreover, the set $\{\chi_a \mid a \in R \}$ contains all characters of $R$. For $0 \le i \le t$, it is easy to see that
\begin{equation}\label{eqn-GR1}
\chi_a((p^i))=\begin{cases}
  p^{(t-i)s} & \mbox{if $v_p(a) \ge t-i$,} \\
  0          & \mbox{if $v_p(a) \le t-i-1$.}
\end{cases}
\end{equation}
Consequently, for $0 \le i \le t-1$,
\begin{equation}\label{eqn-GR2}
\chi_a(R_i)=\begin{cases}
  p^{(t-i)s}-p^{(t-i-1)s} & \mbox{if $v_p(a) \ge t-i$,} \\
  -p^{(t-i-1)s} & \mbox{if $v_p(a)=t-i-1$,} \\
  0          & \mbox{if $v_p(a) < t-i-1$,}
\end{cases}
\end{equation}
and $\chi_a(R_t)=1$.

Now we are ready to define the generalized relative difference sets.

\begin{definition}\label{def-GRDS}
Let $R=\GR(p^t,s)$ and $G=R\times R$. A set $S \subset G$ is a generalized relative difference set in $G$ relative to $\{R_i \times (p^i) \mid 0 \le i \le t\}$, if
$$
\nu_{S}((a,b))=\begin{cases}
  p^{v_p(a)s} & \mbox{if $v_p(a) \le v_p(b)$,} \\
  0           & \mbox{if $v_p(a) > v_p(b)$.}
\end{cases}
$$
Equivalently,
$$
SS^{(-1)}=\sum_{i=0}^t p^{is} (R_i \times (p^i)).
$$
A set $T \subset G$ is a generalized relative difference set in $G$ relative to $\{(p^i) \times R_i \mid 0 \le i \le t\}$, if
$$
\nu_{T}((a,b))=\begin{cases}
  p^{v_p(b)s} & \mbox{if $v_p(a) \ge v_p(b)$,} \\
  0           & \mbox{if $v_p(a) < v_p(b)$.}
\end{cases}
$$
Equivalently,
$$
TT^{(-1)}=\sum_{i=0}^t p^{is} ((p^i) \times R_i).
$$
\end{definition}

Let $R=\GR(p^t,s)$ and $G=R \times R$. Below, we also use $R$ and $R \times R$ to represent their additive groups. For $(a,b) \in R \times R$, we can define a character $\chi_{a,b}$ from $R \times R$ to the multiplicative group of the complex field $\C$, such that $\chi_{a,b}((x,y))=\zpt^{\Tr(ax+by)}$ for each $(x,y) \in R \times R$, where $\zpt$ is a primitive $p^t$-th root of unity. Indeed, $\{\chi_{a,b} \mid a,b \in R\}$ is the set of all characters of $R \times R$.  Define a group isomorphism $\De: R \times R \rightarrow \wh{R \times R}$, which satisfies $\De((a,b))=\chi_{a,b}$ for each $(a,b) \in R \times R$. Throughout the rest of this subsection, we always implicitly use this isomorphism in our construction. The following proposition describes GRDSs from the viewpoint of their character values.

\begin{proposition}\label{prop-GRDSchar}
Let $R=\GR(p^t,s)$ and $G=R\times R$. A set $S \subset G$ is a GRDS in $G$ relative to $\{R_i \times (p^i) \mid 0 \le i \le t\}$, if and only if
$$
|\chi_{a,b}(S)|^2=\begin{cases}
  p^{(t+v_p(b))s} & \mbox{if $v_p(a) \ge v_p(b)$,} \\
  0               & \mbox{if $v_p(a) < v_p(b)$.}
\end{cases}
$$
A set $T \subset G$ is a GRDS in $G$ relative to $\{(p^i) \times R_i \mid 0 \le i \le t\}$, if and only if
$$
|\chi_{a,b}(T)|^2=\begin{cases}
  p^{(t+v_p(a))s} & \mbox{if $v_p(a) \le v_p(b)$,} \\
  0               & \mbox{if $v_p(a) > v_p(b)$.}
\end{cases}
$$
\end{proposition}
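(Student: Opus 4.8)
The plan is to exploit the identity $|\chi_{a,b}(S)|^2 = \chi_{a,b}(S)\,\overline{\chi_{a,b}(S)} = \chi_{a,b}(SS^{(-1)})$, so that the character condition in the proposition is exactly the evaluation of every character on the group ring element $SS^{(-1)}$. By the Fourier inversion formula (Proposition~\ref{prop-fourier}), a group ring element is uniquely determined by its character values; hence the defining identity $SS^{(-1)} = \sum_{i=0}^t p^{is}(R_i \times (p^i))$ from Definition~\ref{def-GRDS} holds if and only if $\chi_{a,b}(SS^{(-1)})$ agrees with $\chi_{a,b}\big(\sum_{i=0}^t p^{is}(R_i \times (p^i))\big)$ for every $(a,b) \in R \times R$. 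Thus the whole proposition reduces to computing the character values of the single fixed element $\sum_{i=0}^t p^{is}(R_i \times (p^i))$ and checking that they match the claimed two-case expression.

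To carry out that computation, first I would use that $\chi_{a,b}((x,y)) = \zpt^{\Tr(ax)}\,\zpt^{\Tr(by)} = \chi_a(x)\chi_b(y)$ factors across the two coordinates, so that $\chi_{a,b}(R_i \times (p^i)) = \chi_a(R_i)\,\chi_b((p^i))$ and, by linearity,
\[
\chi_{a,b}(SS^{(-1)}) = \sum_{i=0}^t p^{is}\,\chi_a(R_i)\,\chi_b((p^i)).
\]
Now I would invoke \eqref{eqn-GR1}: the factor $\chi_b((p^i))$ vanishes unless $v_p(b) \ge t-i$, i.e.\ $i \ge t-v_p(b)$, in which case it equals $p^{(t-i)s}$. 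Combining this with the weight $p^{is}$ collapses the exponent to the constant $p^{ts}$, leaving
\[
\chi_{a,b}(SS^{(-1)}) = p^{ts}\sum_{i = t - v_p(b)}^{t} \chi_a(R_i).
\]

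The cleanest final step is to recognize that the surviving index range reassembles an ideal: since $R_i = (p^i)\setminus(p^{i+1})$ and $R_t=\{0\}$, the sets $R_{t-v_p(b)}, \ldots, R_t$ partition $(p^{t-v_p(b)})$, whence $\sum_{i=t-v_p(b)}^t \chi_a(R_i) = \chi_a((p^{t-v_p(b)}))$. Evaluating this last character sum by \eqref{eqn-GR1} with $t-i = v_p(b)$ gives $p^{v_p(b)s}$ when $v_p(a) \ge v_p(b)$ and $0$ when $v_p(a) < v_p(b)$; multiplying by $p^{ts}$ yields precisely the two cases stated for $S$. The claim for $T$ then follows by the same argument after interchanging the two coordinates, i.e.\ replacing $R_i \times (p^i)$ by $(p^i) \times R_i$ and swapping the roles of $a$ and $b$. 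I expect the only genuine obstacle to be the bookkeeping in the middle step: rather than a direct term-by-term double case analysis combining \eqref{eqn-GR1} and \eqref{eqn-GR2}, routing the sum through the ideal identity $\bigcup_{i \ge j} R_i = (p^j)$ avoids telescoping $\chi_a(R_i)$ by hand (and in fact bypasses \eqref{eqn-GR2} entirely), making the collapse transparent.
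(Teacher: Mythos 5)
Your proposal is correct and follows the same overall strategy as the paper's proof: reduce the statement to evaluating $\chi_{a,b}(SS^{(-1)})=\sum_{i=0}^t p^{is}\chi_a(R_i)\chi_b((p^i))$ and check that this matches the claimed two-case formula, with the converse direction supplied by the Fourier inversion formula (Proposition~\ref{prop-fourier}), which the paper leaves implicit. Where you genuinely diverge is in the final evaluation. The paper splits into the cases $v_p(a)\ge v_p(b)$ and $v_p(a)<v_p(b)$ and telescopes the surviving terms by hand, using both \eqref{eqn-GR1} for $\chi_b((p^i))$ and \eqref{eqn-GR2} for $\chi_a(R_i)$. You instead observe that every surviving summand carries the constant weight $p^{is}\cdot p^{(t-i)s}=p^{ts}$ and that the surviving shells reassemble into the ideal $(p^{t-v_p(b)})$ via $\bigcup_{i\ge t-v_p(b)}R_i=(p^{t-v_p(b)})$, so a single further application of \eqref{eqn-GR1} finishes the computation and \eqref{eqn-GR2} is never needed. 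Both routes are elementary and of comparable length; yours trades the explicit case analysis for a cleaner structural identity and makes the ``if and only if'' logic explicit, while the paper's version keeps all the intermediate cancellations visible. I checked the index bookkeeping (including the boundary term $i=t$, where $R_t=\{0\}$ and $\chi_b((p^t))=1$) and it is sound; the reduction of the $T$ statement to the $S$ statement by swapping coordinates is also fine.
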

\begin{proof}
It suffices to prove the first equation since the second one is analogous. By Definition~\ref{def-GRDS},
$$
SS^{(-1)}=\sum_{i=0}^t p^{is} (R_i \times (p^i)).
$$
Therefore,
$$
|\chi_{a,b}(S)|^2=\sum_{i=0}^t p^{is}\chi_a(R_i)\chi_b((p^i))=p^{ts}+\sum_{i=0}^{t-1} p^{is} \chi_a(R_i)\chi_b((p^i)).
$$
To compute $|\chi_{a,b}(S)|^2$, we are going to use \eqref{eqn-GR1} and \eqref{eqn-GR2} frequently below.

If $v_p(a) \ge v_p(b)$, then
\begin{align*}
|\chi_{a,b}(S)|^2&=p^{ts}+\sum_{i=t-v_p(b)}^{t-1} p^{is} \chi_a(R_i)\chi_b((p^i)) \\
                 &=p^{ts}+\sum_{i=t-v_p(b)}^{t-1} p^{is} (p^{(t-i)s}-p^{(t-i-1)s}) p^{(t-i)s} \\
                 &=p^{(t+v_p(b))s}.
\end{align*}

If $v_p(a)<v_p(b)$, then
\begin{align*}
|\chi_{a,b}(S)|^2&=p^{ts}+\sum_{i=t-v_p(a)-1}^{t-1} p^{is} \chi_a(R_i)\chi_b((p^i)) \\
                 &=p^{ts}+p^{(t-v_p(a)-1)s}(-p^{v_p(a)s})p^{(v_p(a)+1)s}+\sum_{i=t-v_p(a)}^{t-1} p^{is}(p^{(t-i)s}-p^{(t-i-1)s})p^{(t-i)s} \\
                 &=p^{ts}+p^{ts}(-p^{v_p(a)s})+p^{ts}(p^{v_p(a)s}-1) \\
                 &=0.
\end{align*}
Therefore,
$$
|\chi_{a,b}(S)|^2=\begin{cases}
  p^{(t+v_p(b))s} & \mbox{if $v_p(a) \ge v_p(b)$,} \\
  0               & \mbox{if $v_p(a) < v_p(b)$.}
\end{cases}
$$
\end{proof}

Now we are ready to extend the construction in Proposition~\ref{prop-con}(4) from the viewpoint of GRDSs.

\begin{theorem}\label{thm-GRDS}
Let $R=\GR(p^t,s)$ and $G=R\times R$. Let $S$ be a GRDS in $G$ relative to $\{R_i \times (p^i) \mid 0 \le i \le t\}$. Then $S$ and $T$ form a primitive formally dual pair in $G$ if and only if $T$ is a GRDS in $G$ relative to $\{(p^i) \times R_i \mid 0 \le i \le t\}$. Moreover, $T$ can be chosen as $T=\{(x,y) \in G \mid (y,x) \in S\}$.
\end{theorem}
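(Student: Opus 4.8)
The plan is to reduce everything to the character computation already supplied by Proposition~\ref{prop-GRDSchar}, together with the interchangeability of the two defining conditions of a formally dual pair (Remark~\ref{rem-def}(2)). First I would pin down the cardinalities. Evaluating the defining condition of $S$ at the identity gives $|S|=\nu_S((0,0))=p^{ts}$, so $|G|=|R|^2=p^{2ts}$, and Proposition~\ref{prop-non}(1) forces $|T|=|G|/|S|=p^{ts}$ in the forward direction; in the backward direction one instead reads $|T|=\nu_T((0,0))=p^{ts}$ straight off Definition~\ref{def-GRDS}. In either case $|S|^2/|T|=p^{ts}$.

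Next I would translate condition \eqref{eqn-def}. By Proposition~\ref{prop-GRDSchar}, $|\chi_{a,b}(S)|^2$ equals $p^{(t+v_p(b))s}$ when $v_p(a)\ge v_p(b)$ and $0$ otherwise. Dividing by $p^{ts}=|S|^2/|T|$, the requirement $|\chi_{a,b}(S)|^2=(|S|^2/|T|)\nu_T((a,b))$ becomes $\nu_T((a,b))=p^{v_p(b)s}$ when $v_p(a)\ge v_p(b)$ and $\nu_T((a,b))=0$ otherwise, which is verbatim the defining condition in Definition~\ref{def-GRDS} for $T$ to be a GRDS relative to $\{(p^i)\times R_i \mid 0\le i\le t\}$. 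Hence \eqref{eqn-def} holds for every $y\in G$ if and only if $T$ is such a GRDS; by Remark~\ref{rem-def}(2) this is already equivalent to $S$ and $T$ forming a formally dual pair, so no separate check of \eqref{eqn-def2} is needed.

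Then I would dispatch primitivity. By Remark~\ref{rem-def}(1) it suffices to show that neither $S$ nor $T$ lies in a coset of a proper subgroup, and by Lemma~\ref{lem-priset} this reduces to checking that $|\chi_y(\cdot)|^2$ attains the maximal value $|S|^2=|T|^2=p^{2ts}$ only at the principal character. From Proposition~\ref{prop-GRDSchar}, $|\chi_{a,b}(S)|^2=p^{2ts}$ forces $v_p(b)=t$ together with $v_p(a)\ge v_p(b)$, i.e. $a=b=0$, and the symmetric computation applies to $T$. Thus both $S$ and $T$ are primitive and the pair is a primitive formally dual pair.

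Finally, for the explicit choice let $\tau$ denote the coordinate swap $(x,y)\mapsto(y,x)$, an automorphism of $G=R\times R$, so that $T=\tau(S)$. Extending $\tau$ linearly to $\Z[G]$ one gets $TT^{(-1)}=\tau(SS^{(-1)})=\tau\big(\sum_{i=0}^t p^{is}(R_i\times(p^i))\big)=\sum_{i=0}^t p^{is}((p^i)\times R_i)$, since $\tau$ merely interchanges the two factors; hence this $T$ satisfies Definition~\ref{def-GRDS} relative to $\{(p^i)\times R_i \mid 0\le i\le t\}$, as required. The entire argument is bookkeeping with the valuations $v_p(a),v_p(b)$; the only point demanding care is correctly matching the three cases of Proposition~\ref{prop-GRDSchar} against the two cases of Definition~\ref{def-GRDS} and remembering to invoke Remark~\ref{rem-def}(2) so that \eqref{eqn-def2} need not be verified independently. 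That bookkeeping, rather than any genuine difficulty, is the main obstacle.
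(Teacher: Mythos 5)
Your proposal is correct and follows essentially the same route as the paper: use Proposition~\ref{prop-GRDSchar} to convert \eqref{eqn-def} into the defining $\nu_T$-condition of Definition~\ref{def-GRDS}, observe that GRDSs cannot lie in a coset of a proper subgroup (you make this explicit via Lemma~\ref{lem-priset}, the paper asserts it directly), and obtain the explicit $T$ by swapping coordinates. The extra bookkeeping you supply (the cardinality count $|S|=|T|=p^{ts}$ and the appeal to Remark~\ref{rem-def}(2)) is implicit in the paper's argument and checks out.
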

\begin{proof}
By Proposition~\ref{prop-GRDSchar}, we have
$$
|\chi_{a,b}(S)|^2=\begin{cases}
  p^{(t+v_p(b))s} & \mbox{if $v_p(a) \ge v_p(b)$,} \\
  0               & \mbox{if $v_p(a) < v_p(b)$.}
\end{cases}
$$
By Definition~\ref{def-iso}, $S$ and $T$ form a formally dual pair in $G$ if and only if
\begin{equation}\label{eqn-T}
\nu_T((a,b))=\begin{cases}
  p^{v_p(b)s} & \mbox{if $v_p(a) \ge v_p(b)$,} \\
  0           & \mbox{if $v_p(a) < v_p(b)$.}
\end{cases}
\end{equation}
which, by Definition~\ref{def-GRDS}, says that $T$ is a GRDS in $G$ relative to $\{(p^i) \times R_i \mid 0 \le i \le t\}$. Since both $S$ and $T$ are GRDSs, each of them cannot be contained in a coset of a proper subgroup of $G$. Thus, $S$ and $T$ form a primitive formally dual pair. Note that $S$ satisfies
\begin{equation}\label{eqn-S}
\nu_{S}((a,b))=\begin{cases}
  p^{v_p(a)s} & \mbox{if $v_p(a) \le v_p(b)$,} \\
  0           & \mbox{if $v_p(a) > v_p(b)$.}
\end{cases}
\end{equation}
Comparing \eqref{eqn-T} and \eqref{eqn-S}, we can choose $T$ as $T=\{(x,y) \in G \mid (y,x) \in S\}$.
\end{proof}

Let $p$ be an odd prime and $R=\GR(p^t,s)$. The following proposition shows that there exists a GRDS in $R \times R$.
\begin{proposition}\label{prop-square}
Let $p$ be an odd prime. Let $R=\GR(p^t,s)$ be a Galois ring and $G=R \times R$. Then $S=\{(x,x^2) \mid x \in R\}$ is a GRDS in $G$ relative to $\{R_i \times (p^i) \mid 0 \le i \le t\}$.
\end{proposition}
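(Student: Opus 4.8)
The plan is to verify the defining identity of Definition~\ref{def-GRDS} directly, by computing the weight enumerator $\nu_S((a,b))$ for an arbitrary $(a,b) \in R \times R$ and comparing it with the required formula. By definition, $\nu_S((a,b))$ counts the ordered pairs $(x_1,x_2) \in R \times R$ with $(x_1,x_1^2)-(x_2,x_2^2)=(a,b)$, that is, with $x_1-x_2=a$ and $x_1^2-x_2^2=b$. First I would linearize the quadratic condition: since $x_1^2-x_2^2=(x_1-x_2)(x_1+x_2)=a(x_1+x_2)$, the system becomes $x_1-x_2=a$ together with $a(x_1+x_2)=b$. Because $p$ is odd, $2$ is a unit in $R$, so the pair $(x_1,x_2)$ is determined by the single element $u:=x_1+x_2$ via $x_1=2^{-1}(u+a)$ and $x_2=2^{-1}(u-a)$, and every $u \in R$ arises in this way. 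Hence $\nu_S((a,b))=|\{u \in R \mid au=b\}|$, and it remains only to count the solutions of the linear equation $au=b$ in $R$.

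The main (and essentially only) substantive step is this solution count, which I would carry out using the local structure of the Galois ring $R=\GR(p^t,s)$. Set $i=v_p(a)$. If $a=0$, so $i=t$, the equation reads $0=b$: it has $p^{ts}$ solutions when $b=0$ and none otherwise, which matches the claim since then $v_p(a)=t$ and $v_p(a)\le v_p(b)$ forces $b=0$. If $a \ne 0$, write $a=p^i w$ with $w$ a unit; substituting $v=wu$ gives a bijection onto $\{v \in R \mid p^i v=b\}$, the set of preimages of $b$ under multiplication by $p^i$. This map has image $(p^i)$ and kernel $\{v \mid p^i v=0\}=(p^{t-i})$ of size $p^{is}$, using that each successive quotient $(p^j)/(p^{j+1})$ is the residue field $\F_{p^s}$. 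Thus the equation is solvable exactly when $b \in (p^i)$, i.e. when $v_p(b)\ge i=v_p(a)$, and in that case the number of solutions equals the kernel size $p^{is}=p^{v_p(a)s}$. Collecting the cases yields
$$
\nu_S((a,b)) = \begin{cases} p^{v_p(a)s} & \text{if } v_p(a) \le v_p(b), \\ 0 & \text{if } v_p(a) > v_p(b), \end{cases}
$$
which is precisely the condition for $S$ to be a GRDS in $G$ relative to $\{R_i \times (p^i) \mid 0 \le i \le t\}$.

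I expect the only delicate point to be the bookkeeping of valuations and ideal sizes in $R$, in particular correctly identifying the kernel of multiplication by $p^i$ as $(p^{t-i})$ and treating the boundary case $a=0$ (equivalently $v_p(a)=t$) consistently with the general formula, rather than any conceptual difficulty. I note that one could instead verify the character criterion of Proposition~\ref{prop-GRDSchar} by evaluating $\chi_{a,b}(S)=\sum_{x \in R}\zpt^{\Tr(ax+bx^2)}$, but this would require computing quadratic Gauss sums over Galois rings; the difference-counting route above sidesteps such evaluations entirely, and is the one I would follow.
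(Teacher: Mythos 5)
Your proof is correct, and it takes a genuinely different route from the paper's. The paper does exactly what you chose to avoid: it verifies the character criterion of Proposition~\ref{prop-GRDSchar} by expanding $|\chi_{a,b}(S)|^2=\sum_{z \in R} \zpt^{\Tr(az+bz^2)}\sum_{y \in R} \zpt^{\Tr(2byz)}$, killing the inner sum unless $z \in (p^{t-v_p(b)})$, and then noting $bz^2=0$ on that ideal so that only a linear exponential sum remains --- a mild Gauss-sum-type computation rather than a full quadratic Gauss sum evaluation. Your argument instead verifies the difference-count condition of Definition~\ref{def-GRDS} directly, by linearizing $x_1^2-x_2^2=(x_1-x_2)(x_1+x_2)$ and reducing to counting solutions of $au=b$ in $R$; the valuation bookkeeping (kernel of multiplication by $p^i$ equals $(p^{t-i})$ of size $p^{is}$, solvability iff $v_p(b)\ge v_p(a)$, and the boundary case $a=0$) is all handled correctly. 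Both proofs use $p$ odd in an essential way --- the paper through $v_p(2b)=v_p(b)$ when annihilating the inner sum, you through the invertibility of $2$ in the change of variables --- so both make transparent why the construction fails for $p=2$. Your route is the more elementary of the two (no characters, traces, or roots of unity), while the paper's has the advantage of passing through Proposition~\ref{prop-GRDSchar}, which it needs anyway to prove Theorem~\ref{thm-GRDS}; either is a complete proof of the proposition.
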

\begin{proof}
For $(a,b) \in G$,
\begin{align*}
|\chi_{a,b}(S)|^2&=\sum_{x \in R} \zpt^{\Tr(ax+bx^2)}\sum_{y \in R} \zpt^{-\Tr(ay+by^2)} \\
                 &=\sum_{y,z \in R} \zpt^{\Tr(a(y+z)+b(y+z)^2-ay-by^2)} \\
                 &=\sum_{z \in R} \zpt^{\Tr(az+bz^2)}\sum_{y \in R} \zpt^{\Tr(2byz)}.
\end{align*}

If $b \ne 0$, then
$$
|\chi_{a,b}(S)|^2=p^{ts} \sum_{z \in (p^{t-v_p(b)})} \zpt^{\Tr(az+bz^2)}.
$$
Note that $bz^2=0$ for each $z \in (p^{t-v_p(b)})$. Therefore,
\begin{align*}
|\chi_{a,b}(S)|^2&=p^{ts} \sum_{z \in (p^{t-v_p(b)})} \zpt^{\Tr(az)} \\
                 &=\begin{cases}
                   p^{(t+v_p(b))s} & \mbox{if $v_p(a) \ge v_p(b)$,} \\
                   0               & \mbox{if $v_p(a) < v_p(b)$.}
                 \end{cases}
\end{align*}
If $b=0$, i.e., $v_p(b)=t$, then
$$
|\chi_{a,b}(S)|^2=p^{ts} \sum_{z \in R} \zpt^{\Tr(az)}=\begin{cases}
                   p^{2ts} & \mbox{if $v_p(a)=t$,} \\
                   0               & \mbox{if $v_p(a) < t$.}
                 \end{cases}
$$
Consequently,
$$
|\chi_{a,b}(S)|^2=\begin{cases}
                   p^{(t+v_p(b))s} & \mbox{if $v_p(a) \ge v_p(b)$,} \\
                   0               & \mbox{if $v_p(a) < v_p(b)$.}
                 \end{cases}
$$
By Proposition~\ref{prop-GRDSchar}, $S$ is a GRDS in $G$ relative to $\{R_i \times (p^i) \mid 0 \le i \le t\}$.
\end{proof}

Combining Theorem~\ref{thm-GRDS} and Proposition~\ref{prop-square}, we have the following corollary.

\begin{corollary}\label{cor-GRDS}
Let $p$ be an odd prime. Let $R=\GR(p^t,s)$ be a Galois ring and $G=R \times R$. Let $S=\{(x,x^2) \mid x \in R\}$ and $T=\{(x^2,x) \mid x \in R\}$. Then $S$ and $T$ form a primitive formally dual pair in $G$.
\end{corollary}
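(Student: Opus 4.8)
The plan is to read the statement as the composition of the two results that immediately precede it, so that essentially no new computation is required. First I would invoke Proposition~\ref{prop-square}: since $p$ is odd, the set $S=\{(x,x^2) \mid x \in R\}$ is a GRDS in $G=R\times R$ relative to the partition $\{R_i \times (p^i) \mid 0 \le i \le t\}$. This is the substantive input, and it is the one place where oddness of $p$ is genuinely used: the character-sum computation completes the square in $ax+bx^2$ and collapses the inner sum $\sum_{y \in R}\zpt^{\Tr(2byz)}$ onto the annihilator $(p^{t-v_p(b)})$, which is valid only because $2$ is a unit in $R$. Since that proposition is already established, I treat $S$ as a known GRDS.

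Having $S$ as a GRDS relative to $\{R_i \times (p^i)\}$, I would then apply Theorem~\ref{thm-GRDS}, which asserts that $S$ forms a primitive formally dual pair precisely with any GRDS $T$ relative to the transposed partition $\{(p^i)\times R_i \mid 0 \le i \le t\}$, and that one admissible choice is the coordinate swap $T=\{(x,y)\in G \mid (y,x)\in S\}$. The only point needing a line of verification is that this swap is exactly the set named in the corollary: unwinding $(y,x)\in S$ gives $y=x'$ and $x=x'^2$ for some $x'\in R$, so $T=\{(x'^2,x') \mid x'\in R\}=\{(x^2,x)\mid x\in R\}$, matching the statement verbatim.

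With both pieces in hand the conclusion is immediate: Theorem~\ref{thm-GRDS} certifies that $S$ and $T$ form a primitive formally dual pair in $G$, with primitivity coming for free from the fact that a GRDS cannot be contained in a coset of a proper subgroup. I do not expect any real obstacle, since the hard analysis was already discharged in the Gauss-sum estimate of Proposition~\ref{prop-square} and in the character-value bookkeeping of Theorem~\ref{thm-GRDS}. The one consistency check I would make explicit is that the fixed isomorphism $\De((a,b))=\chi_{a,b}$ used throughout the subsection is the same one under which both the GRDS property of $S$ and the transposed relative partition in Theorem~\ref{thm-GRDS} are measured; because both earlier results adopt this identical $\De$, no adjustment is needed and the corollary follows directly.
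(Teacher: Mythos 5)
Your proposal is correct and follows exactly the paper's route: the corollary is stated there as an immediate combination of Proposition~\ref{prop-square} (which shows $S$ is a GRDS, using that $2$ is a unit since $p$ is odd) with Theorem~\ref{thm-GRDS} (which supplies the coordinate-swap $T$ and the primitivity). Your extra check that the swap of $S$ is literally $\{(x^2,x)\mid x\in R\}$ is a harmless elaboration of what the paper leaves implicit.
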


\begin{remark}
\quad
\begin{itemize}
\item[(1)] Applying Corollary~\ref{cor-GRDS} with $s=1$, we reproduce the primitive formally dual pairs of Proposition~\ref{prop-con}(4), which live in $\Z_{p^t} \times \Z_{p^t}$.
\item[(2)] In Proposition~\ref{prop-square}, if $p=2$, then $S$ is not a GRDS in $G$.
\end{itemize}
\end{remark}

\subsection{Construction from skew Hadamard difference sets}

The following is a construction related to skew Hadamard difference sets. We first give a brief account about skew Hadamard difference sets. For a more detailed treatment, please refer to \cite[Chapter VI, Section 8]{BJL}.

\begin{definition}\label{def-DS}
Let $G$ be a group of order $v$. A $k$-subset $D$ of $G$ is a $(v,k,\lambda)$ difference set in $G$, if
$$
DD^{(-1)}=k+\la(G-1).
$$
A difference set $D$ in a group $G$ is a skew Hadamard difference set, if $1$, $D$, $D^{(-1)}$ form a partition of $G$.
\end{definition}

We mention some known results about skew Hadamard difference sets as follows.

\begin{proposition}{\rm\cite[Chapter VI, Theorem 8.7]{BJL}, \cite[Lemma 2.5, Corollary 2.7]{WH}} \label{prop-SHDS}
Let $D$ be a skew Hadamard difference set in an abelian group $G$ of order $v$. Then $D$ is a $(v,\frac{v-1}{2},\frac{v-3}{4})$ difference set in $G$, where $v$ is a prime power congruent to $3$ modulo $4$. Moreover, $D^{(-1)}$ is also a skew Hadamard difference set in $G$. For each nonprincipal character $\chi$ of $G$, we have
$$
(\chi(D),\chi(D^{(-1)})) \in \left\{\left(\frac{-1+\sqrt{-v}}{2},\frac{-1-\sqrt{-v}}{2}\right), \left(\frac{-1-\sqrt{-v}}{2},\frac{-1+\sqrt{-v}}{2}\right)\right\}.
$$
Moreover, $\big\{\chi \in \wh{G} \mid \chi(D)=\frac{-1+\sqrt{-v}}{2}\big\}$ is a skew Hadamard difference set in the character group $\wh{G}$.
\end{proposition}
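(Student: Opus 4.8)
The plan is to dispatch each of the four assertions in turn, obtaining everything except the prime-power property by elementary character theory, and to flag the prime-power property as the single deep input.

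First I would extract the parameters. Since $1$, $D$, $D^{(-1)}$ partition $G$ and $|D|=|D^{(-1)}|$, counting gives $2|D|=v-1$, so $k:=|D|=\frac{v-1}{2}$; in particular $v$ is odd. Rewriting the partition in the group ring as $D+D^{(-1)}=G-1$ supplies the workhorse identity. Because $D$ is a difference set, $DD^{(-1)}=k+\la(G-1)$, and the usual count of ordered pairs (the coefficient of $1$ against the total $k^2$) yields $k(k-1)=\la(v-1)$; substituting $k=\frac{v-1}{2}$ gives $\la=\frac{v-3}{4}$. Integrality of $\la$ forces $v\equiv 3\pmod 4$. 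The assertion that $v$ is moreover a \emph{prime power} is the genuinely deep ingredient: it does not follow from these manipulations, and I would invoke the structural theory of skew Hadamard difference sets in \cite{BJL} rather than reprove it.

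Next I would compute the character values. For nonprincipal $\chi$ one has $\chi(D^{(-1)})=\ol{\chi(D)}$; applying $\chi$ to $D+D^{(-1)}=G-1$ and using $\chi(G)=0$ gives $\chi(D)+\ol{\chi(D)}=-1$, i.e.\ $\mathrm{Re}\,\chi(D)=-\tfrac12$, while applying $\chi$ to $DD^{(-1)}=k+\la(G-1)$ gives $|\chi(D)|^2=k-\la=\frac{v+1}{4}$. Solving these two relations yields $\chi(D)=\frac{-1\pm\sqrt{-v}}{2}$, with $\chi(D^{(-1)})=\ol{\chi(D)}$ the conjugate, which is precisely the claimed dichotomy. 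That $D^{(-1)}$ is again skew Hadamard is immediate: inversion preserves the difference-set equation, and $1,D^{(-1)},(D^{(-1)})^{(-1)}=1,D^{(-1)},D$ is the same partition.

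The substantial remaining point is that $E:=\{\chi\in\wh G\mid \chi(D)=\frac{-1+\sqrt{-v}}{2}\}$ is a skew Hadamard difference set in $\wh G$. For the skew partition I would observe that $\chi\mapsto\chi^{-1}$ carries $E$ onto $\{\chi:\chi(D)=\frac{-1-\sqrt{-v}}{2}\}=E^{(-1)}$ and, the two values being non-real and distinct, fixes no nonprincipal character; together with $\chi_0\notin E$ for the principal character $\chi_0$, this shows $\chi_0$, $E$, $E^{(-1)}$ partition $\wh G$. To see $E$ is a difference set I would use Pontryagin duality, evaluating the characters of $\wh G$, namely $\mathrm{ev}_g:\chi\mapsto\chi(g)$ for $g\in G$. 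Writing, for nonprincipal $\chi$, the indicator $\mathbf 1_E(\chi)=\frac{\chi(D)-\ol\omega}{\sqrt{-v}}$ with $\omega=\frac{-1+\sqrt{-v}}{2}$, and then evaluating $\mathrm{ev}_g(E)=\sum_{\chi\neq\chi_0}\mathbf 1_E(\chi)\chi(g)$ via orthogonality (so that $\sum_\chi\chi(g)$ equals $v$ when $g=1$ and $0$ otherwise, while $\sum_\chi\chi(D)\chi(g)$ equals $v$ when $g\in D^{(-1)}$ and $0$ otherwise), one finds $\mathrm{ev}_g(E)$ equal to $\frac{v-1}{2}$, $\omega$, or $\ol\omega$ according as $g=1$, $g\in D$, or $g\in D^{(-1)}$. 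Hence $|E|=\frac{v-1}{2}$ and $|\mathrm{ev}_g(E)|^2=\frac{v+1}{4}$ for every $g\neq 1$, so the character criterion for difference sets makes $E$ a $(v,\frac{v-1}{2},\frac{v-3}{4})$ difference set, which with the skew partition is skew Hadamard.

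The only genuine obstacle is the prime-power claim; everything else is a finite-Fourier computation whose one nonroutine move is guessing the indicator $\mathbf 1_E(\chi)=\frac{\chi(D)-\ol\omega}{\sqrt{-v}}$, which linearizes membership in $E$ and reduces the whole dual statement to the orthogonality relations.
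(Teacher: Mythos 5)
Your argument is correct, and it is worth noting that the paper itself offers no proof of this proposition at all: it is stated as a citation to \cite[Chapter VI, Theorem 8.7]{BJL} and \cite[Lemma 2.5, Corollary 2.7]{WH}. So the comparison is between a pure literature reference and your essentially self-contained derivation. Your handling of the parameters, of $v\equiv 3\pmod 4$ via integrality of $\la=\frac{v-3}{4}$, of the character dichotomy from $\chi(D)+\ol{\chi(D)}=-1$ and $|\chi(D)|^2=k-\la=\frac{v+1}{4}$, and of the skewness of $D^{(-1)}$ are all standard and correct, and you are right to single out the prime-power claim as the one genuinely deep input that must be imported from \cite{BJL} rather than rederived. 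The dual-set computation also checks out: with $\omega=\frac{-1+\sqrt{-v}}{2}$ the indicator $\mathbf 1_E(\chi)=\frac{\chi(D)-\ol\omega}{\sqrt{-v}}$ does linearize membership, and evaluating $\mathrm{ev}_g(E)$ gives $\frac{v-1}{2}$, $\omega$, $\ol\omega$ in the three cases, so $|\mathrm{ev}_g(E)|^2=\frac{v+1}{4}$ for all $g\neq 1$ and the Fourier inversion formula (Proposition~\ref{prop-fourier} applied in $\wh{G}$, with the evaluation maps as the full character group by duality) yields the difference-set property. One small presentational caution: the orthogonality sums you quote, $\sum_\chi\chi(g)$ and $\sum_\chi\chi(D)\chi(g)$, run over \emph{all} characters, so the principal-character contributions ($1$ and $k$ respectively) must be subtracted before dividing by $\sqrt{-v}$; your stated final values show you did this, but the intermediate sentence as written elides that step. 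With that cosmetic point addressed, your proof is a complete and correct substitute for the citation, modulo the prime-power assertion.
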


There are many known constructions of skew Hadamard difference sets in the elementary abelian group $\Zp^m$, where $p$ is an odd prime and $p^m \equiv 3 \pmod 4$. For each $a \in \Zp^m$, define $\chi_a(x)=\zp^{a\cdot x}$ for each $x \in \Zp^m$, where $a\cdot x$ is the usual inner product in $\Zp^m$ and $\zp$ is the $p$-th root of unity. Then, $\{\chi_a \mid a \in \Zp^m\}$ is the set of all characters of $\Zp^m$. Define the {\it dual set} of $D$ as
$$
D^*=\left\{ a \in \Zp^m \mid \chi_a(D)=\frac{-1+\sqrt{-v}}{2} \right\}.
$$
By Proposition~\ref{prop-SHDS}, the dual set $D^*$ is a skew Hadamard difference set in $\Zp^m$.

Based on the definition of skew Hadamard difference sets $D$ and $D^*$, the following lemma follows easily:

\begin{lemma}\label{lem-SHDSchar}
Let $D$ be a skew Hadamard difference set in $\Zp^m$. Set $v=p^m$. For each nonzero $a,b \in \Zp^m$, we have
$$
\setlength{\jot}{5pt}
\chi_a(D)\chi_b(D)=
\left\{\begin{aligned}
  &\frac{1-2\sqrt{-v}-v}{4} & \mbox{if $a,b \in D^*$,} \\
  &\frac{v+1}{4}            & \mbox{if $a \in D^*, b \in D^{*(-1)}$,} \\
  &\frac{1+2\sqrt{-v}-v}{4} & \mbox{if $a,b \in D^{*(-1)}$.}
\end{aligned}\right.
$$
\end{lemma}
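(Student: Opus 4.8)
The plan is to reduce everything to the two admissible values of $\chi_a(D)$ supplied by Proposition~\ref{prop-SHDS} and then simply multiply them out in each of the three cases. The only genuine work is to build a clean dictionary translating the membership ``$a \in D^*$'' or ``$a \in D^{*(-1)}$'' into the value of $\chi_a(D)$; once that dictionary is in place, the three displayed formulas are one-line algebraic expansions.

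First I would record the dictionary. By Proposition~\ref{prop-SHDS}, for every nonzero $a \in \Zp^m$ the value $\chi_a(D)$ is one of $\frac{-1+\sqrt{-v}}{2}$ or $\frac{-1-\sqrt{-v}}{2}$, and by the very definition of the dual set, $a \in D^*$ holds exactly when $\chi_a(D) = \frac{-1+\sqrt{-v}}{2}$. For the complementary value I would use that $\chi_{-a}(x) = \zp^{-a \cdot x} = \overline{\chi_a(x)}$, hence $\chi_{-a}(D) = \overline{\chi_a(D)}$, together with the fact that $\sqrt{-v}$ is purely imaginary so that $\overline{\sqrt{-v}} = -\sqrt{-v}$. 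Since $D^{*(-1)} = -D^*$ in the additive group $\Zp^m$, a nonzero $a$ lies in $D^{*(-1)}$ precisely when $-a \in D^*$, i.e. when $\overline{\chi_a(D)} = \frac{-1+\sqrt{-v}}{2}$, i.e. when $\chi_a(D) = \frac{-1-\sqrt{-v}}{2}$. Because $D^*$ is itself a skew Hadamard difference set (Proposition~\ref{prop-SHDS}), the sets $\{0\}$, $D^*$, $D^{*(-1)}$ partition $\Zp^m$ by Definition~\ref{def-DS}, so every nonzero $a$ falls into exactly one of the two cases and the dictionary is both complete and consistent.

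With this dictionary the computation is immediate. When $a,b \in D^*$ the product is $\left(\frac{-1+\sqrt{-v}}{2}\right)^2 = \frac{1 - 2\sqrt{-v} - v}{4}$, using $(\sqrt{-v})^2 = -v$; when $a \in D^*$ and $b \in D^{*(-1)}$ the two factors are complex conjugates, so their product is $\frac{(-1)^2 - (\sqrt{-v})^2}{4} = \frac{v+1}{4}$; and when $a,b \in D^{*(-1)}$ the product is $\left(\frac{-1-\sqrt{-v}}{2}\right)^2 = \frac{1 + 2\sqrt{-v} - v}{4}$. The omitted case $a \in D^{*(-1)}$, $b \in D^*$ agrees with the second one by commutativity of multiplication. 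There is no real obstacle here; the single point demanding care is the identification of $D^{*(-1)}$ with the character value $\frac{-1-\sqrt{-v}}{2}$, which rests on the skew Hadamard property of $D^*$ and on the conjugation identity $\chi_{-a}(D) = \overline{\chi_a(D)}$, and everything else is routine.
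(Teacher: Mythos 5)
Your proof is correct and is exactly the routine verification the paper omits (the lemma is stated with only the remark that it ``follows easily''): the dictionary identifying $D^*$ and $D^{*(-1)}$ with the two character values $\frac{-1\pm\sqrt{-v}}{2}$ via $\chi_{-a}(D)=\overline{\chi_a(D)}$ and the skew Hadamard partition $\{0\}\cup D^*\cup D^{*(-1)}=\Zp^m$ is the one essential point, and the three products then expand as you state.
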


For $a \in \Zp^m$ and $\al \in \Zp$, we have $\chi_{a}(\al x)=\chi_{\al a}(x)$ for each $x \in \Zp^m$. Moreover, for $a,b \in \Zp^m$, we have $\chi_{a+b}(x)=\chi_a(x)\chi_b(x)$ for each $x \in \Zp^m$. For $a,b \in \Zp^m$, we know that $\chi_{a,b}$ is defined to be a character of $\Zp^m \times \Zp^m$, where $\chi_{a,b}(x,y)=\chi_a(x)\chi_b(y)$ for each $x,y \in \Zp^m$. Define a group isomorphism $\De: \Zp^m \times \Zp^m \rightarrow \wh{\Zp^m \times \Zp^m}$, which satisfies $\De((a,b))=\chi_{a,b}$ for each $(a,b) \in \Zp^m \times \Zp^m$. Throughout the rest of this subsection, we always implicitly use this isomorphism in our construction. Now, we are ready to describe our construction.

\begin{theorem}\label{thm-SHDS}
Let $p^m \equiv 3 \pmod{4}$ be a prime power. Let $D$ be a skew Hadamard difference set in $\Zp^m$ and $G=\Zp^m \times \Zp^m$. Suppose $\al$ and $\be$ are nonzero elements of $\Zp$ with $\al \ne \be$. Define
$$
S=(0,0)+\sum_{x \in D} (x,\al x)+\sum_{x \in D^{(-1)}} (x, \be x)
$$
and
$$
T=(0,0)+\sum_{x \in D^*} (\frac{\al}{\al-\be}x,\frac{1}{\be-\al} x)+\sum_{x \in D^{*(-1)}} (\frac{\be}{\al-\be}x, \frac{1}{\be-\al} x).
$$
Then $S$ and $T$ form a primitive formally dual pair in $G$.
\end{theorem}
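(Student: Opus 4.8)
The plan is to verify the two identities of Definition~\ref{def-iso} directly. Since $D$ is a skew Hadamard difference set, $0,D,D^{(-1)}$ partition $\Zp^m$, so $|D|=|D^{(-1)}|=\frac{v-1}{2}$ with $v=p^m$, and the same holds for the dual set $D^*$ and $D^{*(-1)}$. As $\al\neq\be$, the three pieces of $S$ (resp.\ $T$) are pairwise disjoint, so $|S|=|T|=1+2\cdot\frac{v-1}{2}=v$. By Remark~\ref{rem-def}(2) it therefore suffices to establish \eqref{eqn-def}, which here reads $|\chi_{a,b}(S)|^2=v\,\nu_T((a,b))$ for every $(a,b)\in G$. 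The strategy is to compute the left-hand side through character sums over $D$, compute the right-hand side by a direct count of differences in $T$, and check that the two agree pointwise.

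First I would compute $\chi_{a,b}(S)$. Using $\chi_{a,b}(x,\ga x)=\chi_{a+\ga b}(x)$ for $\ga\in\Zp$, one obtains $\chi_{a,b}(S)=1+\chi_c(D)+\chi_d(D^{(-1)})$, where $c=a+\al b$ and $d=a+\be b$. Since $\al\neq\be$, the assignment $(a,b)\mapsto(c,d)$ is a bijection of $G$, and $c=d=0$ occurs only at $(a,b)=(0,0)$. Writing $\theta=\frac{-1+\sqrt{-v}}{2}$ and $\ol{\theta}=\frac{-1-\sqrt{-v}}{2}$, Proposition~\ref{prop-SHDS} gives $\chi_d(D^{(-1)})=\ol{\chi_d(D)}$, and says that for a nonzero argument the value of $\chi(D)$ is $\theta$ or $\ol{\theta}$ according to membership in $D^*$ or $D^{*(-1)}$. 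Splitting into cases (both $c,d$ zero; exactly one zero; both nonzero, subdivided by the \emph{types} of $c$ and $d$, that is, whether each lies in $D^*$ or in $D^{*(-1)}$) and using $\theta+\ol{\theta}=-1$ together with $|\tfrac{v+1}{2}+\theta|^2=\frac{v(v+1)}{4}$ and $|1+2\theta|^2=v$, this yields
\begin{equation*}
|\chi_{a,b}(S)|^2=\begin{cases} v^2 & \text{if }(a,b)=(0,0), \\ \frac{v(v+1)}{4} & \text{if exactly one of }c,d\text{ is }0, \\ v & \text{if }c,d\neq 0\text{ are of opposite type}, \\ 0 & \text{if }c,d\neq 0\text{ are of the same type.} \end{cases}
\end{equation*}
Lemma~\ref{lem-SHDSchar} may be used to organize the cross terms when expanding the square.

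Next I would compute $\nu_T((a,b))=[TT^{(-1)}]_{(a,b)}$ by a direct count. The nonzero points of $T$ lie on the two lines $\ell_1=\{(a,b)\mid a+\al b=0\}$ and $\ell_2=\{(a,b)\mid a+\be b=0\}$, so $c=0$ (resp.\ $d=0$) describes $\ell_1$ (resp.\ $\ell_2$). Splitting the pairwise differences of $T$ according to whether the two points come from a single line together with the origin, or from the two different lines, the same-line differences are governed by the skew Hadamard difference set property of $D^*$ and $D^{*(-1)}$ (both $(v,\frac{v-1}{2},\frac{v-3}{4})$ difference sets) and the partition $\Zp^m=\{0\}\cup D^*\cup D^{*(-1)}$, giving $\frac{v-3}{4}+1=\frac{v+1}{4}$ on each line. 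The cross differences $P_x-Q_{x'}$ and $Q_x-P_{x'}$ reduce, after solving the associated invertible $2\times 2$ systems, to the conditions ``$d\in D^*$ and $c\in D^{*(-1)}$'' and ``$c\in D^*$ and $d\in D^{*(-1)}$'' respectively; hence they contribute $1$ precisely when $c,d$ are of opposite type and $0$ when of the same type, and never land on $\ell_1$ or $\ell_2$. Collecting these counts shows $\nu_T((a,b))$ equals $v,\ \frac{v+1}{4},\ 1,\ 0$ in exactly the four cases above, which is precisely $\frac1v|\chi_{a,b}(S)|^2$. This proves \eqref{eqn-def}, and \eqref{eqn-def2} follows by Remark~\ref{rem-def}(2).

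Finally, primitivity follows from the character spectrum. Since $v=p^m\ge 3$, each of $\frac{v(v+1)}{4},v,0$ is strictly less than $v^2$, so $|\chi_{a,b}(S)|^2=v^2=|S|^2$ holds only for the principal character $(a,b)=(0,0)$; by Lemma~\ref{lem-priset}, $S$ is not contained in a coset of a proper subgroup, and by Remark~\ref{rem-def}(1) this makes both $S$ and $T$ primitive. Together with the formal duality just established, $S$ and $T$ form a primitive formally dual pair. I expect the main obstacle to be the bookkeeping in the computation of $\nu_T$: correctly separating the on-line contributions from the cross contributions, solving the cross-difference systems while tracking the coefficients $\frac{\al}{\al-\be},\frac{\be}{\al-\be},\frac{1}{\be-\al}$, and matching each resulting case to the corresponding value of $|\chi_{a,b}(S)|^2$.
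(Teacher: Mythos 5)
Your verification of formal duality is correct and follows essentially the same route as the paper: you evaluate $\chi_{a,b}(S)=1+\chi_c(D)+\chi_d(D^{(-1)})$ with $c=a+\al b$, $d=a+\be b$ through the skew Hadamard character values, and you obtain $\nu_T$ from the difference structure of $\{0\}\cup D^*\cup D^{*(-1)}$ placed on the two lines; the four matching cases $\bigl(v^2,\tfrac{v(v+1)}{4},v,0\bigr)$ versus $\bigl(v,\tfrac{v+1}{4},1,0\bigr)$ agree with the paper's computation of $SS^{(-1)}$ and $TT^{(-1)}$, and your cross-difference analysis (including the observation that cross differences cannot land on either line) is sound.

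The primitivity argument, however, has a gap. Remark~\ref{rem-def}(1) requires that \emph{neither} $S$ nor $T$ be contained in a coset of a proper subgroup; you verify this only for $S$. Your computation shows $\nu_T(y)<|T|$ for all $y\neq 0$, which via \eqref{eqn-def} gives $|\chi_y(S)|^2<|S|^2$ for every nonprincipal $\chi_y$ and hence the claim for $S$ (it also shows $T$ is not a union of cosets of a nontrivial subgroup), but it says nothing about whether $T$ itself sits inside a coset of a proper subgroup. For that you need $|\chi_y(T)|^2<|T|^2$ for all nonprincipal $\chi_y$, which by \eqref{eqn-def2} is equivalent to $\nu_S(y)<|S|$ for all $y\neq 0$ --- a quantity you never compute. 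The fix is a second, parallel computation: either count the differences of $S$ by the same line-by-line method (since $S$ has the identical shape with $D$, $D^{(-1)}$ in place of $D^*$, $D^{*(-1)}$, the nonzero coefficients of $SS^{(-1)}$ come out as $\tfrac{v+1}{4}$, $1$ or $0$, all less than $v$), or evaluate $|\chi_{a,b}(T)|^2$ by character sums. This is exactly the step the paper carries out when it notes that $\nu_S((a,b))=q$ only at $(a,b)=(0,0)$ and then applies Lemma~\ref{lem-priset} to $T$.
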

\begin{proof}
Let ${\Zp^m}^*$ be the set of nonzero elements in $\Zp^m$ and $q=p^m$. Since $D$ is a skew Hadamard difference set and $D^*$ is the dual set of $D$, by Proposition~\ref{prop-SHDS},
$$
DD^{(-1)}=D^*D^{*(-1)}=\frac{q-1}{2} \cdot 0+\frac{q-3}{4}{\Zp^m}^*.
$$
Employing the above equation, it is routine to verify that
\begin{align*}
SS^{(-1)}=&q(0,0)+\frac{q+1}{4}\sum_{x \ne 0} (x,\al x)+\frac{q+1}{4}\sum_{x \ne 0} (x,\be x)+\sum_{\substack{x \in D \\ y \in D^{(-1)}}} (x-y,\al x-\be y) \\
        &+\sum_{\substack{x \in D \\ y \in D^{(-1)}}} (y-x,\be y-\al x)
\end{align*}
and
\begin{align*}
TT^{(-1)}=&q(0,0)+\frac{q+1}{4}\sum_{x \ne 0} (\frac{\al}{\al-\be}x,\frac{1}{\be-\al} x)+\frac{q+1}{4}\sum_{x \ne 0} (\frac{\be}{\al-\be}x,\frac{1}{\be-\al} x) \\
        &+\sum_{\substack{x \in D^* \\ y \in D^{*(-1)}}} (\frac{\al x-\be y}{\al-\be},\frac{x-y}{\be-\al})+\sum_{\substack{x \in D^* \\ y \in D^{*(-1)}}} (\frac{\be y-\al x}{\al-\be},\frac{y-x}{\be-\al}).
\end{align*}
Consequently,
$$
\nu_T((a,b))=\begin{cases}
  q & \mbox{if $a=b=0$,} \\
  \frac{q+1}{4} & \mbox{if $a+\al b=0$ or $a+\be b=0$,} \\
  1 & \mbox{if $a+\al b \in D^*,a+\be b \in D^{*(-1)}$ or $a+\al b \in D^{*(-1)},a+\be b \in D^{*}$,} \\
  0 & \mbox{if $a+\al b \in D^*,a+\be b \in D^{*}$ or $a+\al b \in D^{*(-1)},a+\be b \in D^{*(-1)}$.}
\end{cases}
$$

On the other hand, for each $a,b \in \Zp^m$, recall that $\chi_{a+b}(x)=\chi_a(x)\chi_b(x)$ for each $x \in \Zp^m$. Then, we have
\begin{align*}
|\chi_{a,b}(S)|^2=&q+\frac{q+1}{4} \chi_{a+\al b}({\Zp^m}^*)+\frac{q+1}{4} \chi_{a+\be b}({\Zp^m}^*)\\ &+\chi_{a+\al b}(D)\chi_{a+\be b}(D)+\chi_{-(a+\al b)}(D)\chi_{-(a+\be b)}(D).
\end{align*}
Below, we are going to use Lemma~\ref{lem-SHDSchar} frequently.

If $a+\al b=0$, then $a+\be b \ne 0$ and
$$
|\chi_{a,b}(S)|^2=q+\frac{q+1}{4}(q-1)+\frac{q+1}{4}(-1)+\frac{q-1}{2}\chi_{a+\be b}({\Zp^m}^*)=\frac{q^2+q}{4}.
$$
Similarly, if $a+\be b=0$, we also have
$$
|\chi_{a,b}(S)|^2=\frac{q^2+q}{4}.
$$
If $a+\al b \in D^*$, $a+\be b \in D^{*(-1)}$ or $a+\al b \in D^{*(-1)}$, $a+\be b \in D^{*}$, then
$$
|\chi_{a,b}(S)|^2=q-\frac{q+1}{4}-\frac{q+1}{4}+2\frac{(q+1)}{4}=q.
$$
If $a+\al b,a+\be b \in D^*$, or $a+\al b,a+\be b \in D^{*(-1)}$, then
$$
|\chi_{a,b}(S)|^2=q-\frac{q+1}{4}-\frac{q+1}{4}+\frac{(1-2\sqrt{-q}-q)}{4}+\frac{(1+2\sqrt{-q}-q)}{4}=0.
$$
Therefore,
$$
|\chi_{a,b}(S)|^2=\begin{cases}
  q^2 & \mbox{if $a=b=0$,} \\
  \frac{q^2+q}{4} & \mbox{if $a+\al b=0$ or $a+\be b=0$,} \\
  q & \mbox{if $a+\al b \in D^*,a+\be b \in D^{*(-1)}$ or $a+\al b \in D^{*(-1)},a+\be b \in D^{*}$,} \\
  0 & \mbox{if $a+\al b \in D^*,a+\be b \in D^{*}$ or $a+\al b \in D^{*(-1)},a+\be b \in D^{*(-1)}$.}
\end{cases}
$$
Hence, $S$ and $T$ form a formally dual pair. By the above equation and Lemma~\ref{lem-priset}, $S$ is not contained in a coset of a proper subgroup in $G$. Note that $\nu_S((a,b))=q$ if and only if $(a,b)=(0,0)$. This implies $|\chi_{a,b}(T)|^2=q^2=|T|^2$ if and only if $(a,b)=(0,0)$. By Lemma~\ref{lem-priset}, $T$ is not contained in a coset of a proper subgroup in $G$. Thus, $S$ and $T$ form a primitive formally dual pair in $G$.
\end{proof}

\begin{remark}\label{rem-semi}
We remark that the primitive formally dual pairs in Theorem~\ref{thm-SHDS} are inequivalent to those in Theorem~\ref{thm-RDS}(2), since they have distinct character spectra. Moreover, there are numerous constructions of skew Hadamard difference sets in elementary abelian groups, see \cite{DPW,DWX,DY,FX,Paley,WQWX} for instance. Notably, skew Hadamard difference sets can be derived from presemifields with commutative multiplication \cite[Proposition 3.6]{DY}, \cite[Corollary 2.7]{WQWX}. Therefore, many different primitive formally dual pairs can be produced via Theorem~\ref{thm-SHDS}. Hence, a natural question is to determine whether these different primitive formally pairs are equivalent or not.
\end{remark}

We conclude this section by mentioning an unexpected example of primitive formally dual pair in $\Z_2 \times \Z_4 \times \Z_4$.

\begin{example}\label{exam-244}
In the group $\Z_2\times\Z_4 \times \Z_4$, we find a primitive formally dual pair $S$ and $T$, with
$$
S=\{(0,0,0), (0,0,1),(0,1,0), (1,1,1)\}
$$
and
$$
T=\{ (0,0,0), (0,0,1), (0,1,0), (0,1,1), (1,0,0), (1,0,3), (1,3,0), (1,3,3) \}.
$$
It is worth noting that this example is the first known primitive formally dual pair, in which the sizes of two primitive formally dual sets are not equal. We remark that this example has been extended to infinite families in a recent work \cite{LP}.
\end{example}

\section{Even set: a new model}\label{sec4}

The constructions in the last section suggest the close connection between primitive formally dual sets and the RDS-related structures. A natural question is the exact relation between a primitive formally dual set and an $(n,n,n,1)$-RDS. In this section we answer this question by proposing the concept of even sets as a new model for the study of formally dual sets, which includes RDSs and GRDSs as special cases. As we shall see, even sets provide more structural information about primitive formally dual sets. In addition, we shed some new light on Conjecture~\ref{conj-main}, by observing its connection to the nonexistence result of RDSs in cyclic groups.

\subsection{Even set and formally dual set}


We first propose the concept of even sets.

\begin{definition}\label{def-evenset}
Let $G$ be a group and $H_i$ be a subgroup of $G$ for each $1 \le i \le r$. Let $S$ be a subset of $G$, satisfying
\begin{equation}\label{eqn-Sdecom}
SS^{(-1)} = \sum_{i=1}^r \lambda_i H_i,
\end{equation}
where $\la_i$ is a nonzero integer for each $1 \le i \le r$. Then $S$ is called an even set with respect to $H_i$, $1 \le i \le r$. Moreover, if $r$ is the smallest integer such that a decomposition \eqref{eqn-Sdecom} holds, we call $r$ the rank of the even set $S$.
\end{definition}

We remark that Definition~\ref{def-evenset} covers many well-studied configurations as special cases.

\begin{example}
Let $D$ be a $(v,k,\la)$ difference set in a group $G$, satisfying $1 < k < v$. By Definition~\ref{def-DS}, we have
$$
DD^{(-1)}=k+\la (G-1)=(k-\la) + \la G.
$$
Note that $k > \la$ and $\la>0$. Therefore, $D$ is a rank two even set in $G$, with respect to subgroups $\{1\}$ and $G$.
\end{example}

\begin{example}\label{exam-RDS}
Let $R$ be an $(m,n,k,\la)$-RDS in a group $G$ relative to $N$ with $n>1$ and $k>1$. By Definition~\ref{def-RDS}, we have
$$
RR^{(-1)}=k+\la (G-N)=k-\la N+\la G,
$$
where $\la>0$ and $N \ne \{1\}$, $N \ne G$. Thus, $R$ is a rank three even set in $G$, with respect to subgroups $\{1\}$, $N$ and $G$.
\end{example}

\begin{example}
Let $R=\GR(p^t,s)$ and let $S$ be a GRDS in $R \times R$ relative to $\{R_i \times (p^i) \mid 0 \le i \le t\}$. By Definition~\ref{def-GRDS}, we have
$$
SS^{(-1)}=\sum_{i=0}^t p^{is} (R_i \times (p^i)).
$$
For $0 \le i,j \le t$, we know that $(p^i) \times (p^j)$ is a subgroup of $R \times R$. Note that $(p^0)=(1)=R$ and define $(p^{t+1})=\es$. Since
\begin{align*}
&SS^{(-1)}\\
=&\sum_{i=0}^t p^{is} (R_i \times (p^i))=\sum_{i=0}^t (p^{is} ((p^i) \times (p^i))-p^{is} ((p^{i+1}) \times (p^i))) \\
=&p^{ts} ((p^t) \times (p^t))-p^{(t-1)s} ((p^t) \times (p^{t-1}))+p^{(t-1)s} ((p^{t-1}) \times (p^{t-1}))-\cdots-(p) \times R+R \times R,
\end{align*}
$S$ is an even set of rank $2t+1$ in $R \times R$, with respect to a chain of subgroups
$$
\{(0,0)\}=(p^t) \times (p^t) \lneqq (p^t) \times (p^{t-1}) \lneqq (p^{t-1}) \times (p^{t-1}) \lneqq \cdots \lneqq (p) \times R \lneqq R \times R.
$$
\end{example}

For $y \in G$ of order $l$, it generates a cyclic group $\lan y \ran$ of order $l$. We define the \emph{orbit} generated by $y$ as
$$
\orb(y)=\{ y^i \mid i \in \Zl^* \},
$$
where $\Zl^*$ is the set of multiplicative units in $\Zl$. Namely, $\orb(y)$ consists of all elements in $\lan y \ran$ having the same order as $y$. The following proposition presents an alternative description of even sets, for which we give a sketch of proof.

\begin{proposition}\label{prop-orbit}
Let $S$ be a subset of a group $G$. Then $S$ is an even set in $G$ if and only if for each $y \in \{SS^{(-1)}\}$, the weight enumerator $\nu_S$ is constant on $\orb(y)$. Equivalently, $S$ is an even set in $G$ if and only if the multiset $[SS^{(-1)}]$ is a union of orbits in $G$.
\end{proposition}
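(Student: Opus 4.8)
The plan is to prove the stated equivalence in two directions, after first noting that the two formulations coincide: the multiset $[SS^{(-1)}]$ records each $g \in G$ with multiplicity $\nu_S(g)$, so saying it is a union of orbits is literally the same as saying $\nu_S$ is constant on every $\orb(y)$. Thus it suffices to show that $S$ is an even set if and only if $\nu_S$ is orbit-constant.

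For the direction ``even set $\Rightarrow$ orbit-constant'', the key observation I would isolate is that the indicator of any subgroup is already constant on orbits. Indeed, for a subgroup $H \leqslant G$ and $y \in G$ of order $l$, every element of $\orb(y)=\{y^i \mid i \in \Zl^*\}$ generates the same cyclic group $\lan y \ran$; hence if a single $y^i$ lies in $H$ then $\lan y \ran \leqslant H$ and all of $\orb(y)$ lies in $H$, whereas if no power lies in $H$ then $\orb(y) \cap H = \es$. So $g \mapsto [H]_g$ is constant on each orbit. Writing $SS^{(-1)}=\sum_{i=1}^r \la_i H_i$ as in \eqref{eqn-Sdecom}, I would conclude that $\nu_S(g)=\sum_i \la_i [H_i]_g$ is a $\Z$-linear combination of orbit-constant functions, hence itself orbit-constant.

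The substantial direction is the converse, and here is where I expect the work to lie. Assuming $\nu_S$ is orbit-constant, I would first use that $\orb(y)$ is exactly the set of generators of the cyclic subgroup $\lan y \ran$, so the orbits are in bijection with the cyclic subgroups of $G$ and partition $G$. Grouping the terms of $SS^{(-1)}=\sum_g \nu_S(g)\,g$ by orbits and invoking the hypothesis gives
$$
SS^{(-1)} = \sum_{C} a_C\, \eta_C,
$$
where $C$ ranges over the cyclic subgroups of $G$, $a_C \in \Z$ is the common value of $\nu_S$ on the generators of $C$, and $\eta_C \in \Z[G]$ denotes the sum of the generators of $C$. It then remains to re-express each orbit-sum $\eta_C$ in terms of subgroup sums. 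Since $C$ is cyclic, every element of $C$ generates a unique subgroup of $C$, so the subgroup sum satisfies $C = \sum_{C' \leqslant C} \eta_{C'}$, the sum running over all cyclic subgroups $C' \leqslant C$. As the subgroup lattice of a cyclic group is the divisor lattice of its order, Möbius inversion yields $\eta_C = \sum_{C' \leqslant C} \mu(|C|/|C'|)\, C'$, an integer combination of subgroup sums of $G$. Substituting back gives $SS^{(-1)}=\sum_{C'} \la_{C'}\, C'$ with each $\la_{C'} \in \Z$; discarding the summands with $\la_{C'}=0$ produces a decomposition of the form \eqref{eqn-Sdecom}, so $S$ is an even set.

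The main obstacle is precisely realising each orbit-sum $\eta_C$ as an honest integer combination of genuine subgroups of $G$, which is exactly what Möbius inversion over the divisor lattice delivers. I would stress that the definition of even set permits negative coefficients $\la_i$, and this is essential, since the Möbius values $\mu(|C|/|C'|)$ are signed. As a bookkeeping check I would note that the identity lies in its own orbit $\orb(1)=\{1\}$ with $\nu_S(1)=|S|$, so the trivial subgroup $\{1\}$ appears among the $H_i$ whenever $S \ne \es$, in agreement with the worked examples.
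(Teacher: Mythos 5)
Your proof is correct and follows essentially the same route as the paper's (sketched) proof: the forward direction via the observation that each subgroup is a union of orbits, and the converse by expressing each orbit-sum as a signed integer combination of the subgroups of the cyclic group it generates. Your use of M\"obius inversion over the divisor lattice is simply the precise form of the ``inclusion and exclusion'' step the paper invokes, so nothing further is needed.
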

\begin{proof}[Sketch of proof]
For $x, y \in G$, define a relation $x \sim y$ if $x \in \orb(y)$. It is routine to verify that this relation is an equivalence relation. Thus, the group $G$ and each subgroup of $G$, can be partitioned into a disjoint union of orbits.

If $S$ is an even set in $G$, then $SS^{(-1)}=\sum_{i}\la_iH_i$, where the $H_i$'s are subgroups of $G$. Since each $H_i$ can be partitioned into a disjoint union of orbits, then $[SS^{(-1)}]$ is a union of orbits. Thus, for each $y \in \{SS^{(-1)}\}$, we know that $\nu_S$ is constant on $\orb(y)$.

Conversely, if $\nu_S$ is constant on $\orb(y)$ for each $y \in \{SS^{(-1)}\}$, then $[SS^{(-1)}]$ is a union of orbits. For each orbit $\orb(y)$, by the inclusion and exclusion theorem, we can see that $\orb(y)=\sum_{i}\mu_iL_i$, where $L_i$ is a subgroup of $\lan y \ran$. Consequently, $SS^{(-1)}$ can be decomposed into a summation of subgroups of $G$ and therefore, $S$ is an even set in $G$.
\end{proof}

In order to build the connection between formally dual sets and even sets, we need Lemma~\ref{lem-char}. Each $i \in \Zl^*$ corresponds to a Galois automorphism $\sig_i \in \Gal(\Q(\zl)/\Q)$, where $\sig_i(\zl)=\zl^i$ and $\zl$ is a primitive $l$-th root of unity. Recall that for each $\chi_y \in \wh{G}$, we have a natural projection $\rho: G \rightarrow G/\ker \chi_y$. The character $\chi_y$ induces a character $\wti{\chi_y}$ of $G/\ker \chi_y$, satisfying $\wti{\chi_y}(\rho(g))=\chi_y(g)$ for each $g \in G$.

\begin{lemma}\label{lem-char}
Let $G$ be a group and $y \in G$ of order $l$. Let $\De$ be an isomorphism from $G$ to $\wh{G}$, such that $\De(g)=\chi_g$ for each $g \in G$. Let $\rho: G \rightarrow G/\ker \chi_y$ be the natural projection. Then
\begin{itemize}
\item[(1)] $\chi_y$ is a character of order $l$. For each $i \in \Zl^*$ and $g \in G$, we have $\sig_i(\chi_y(g))=\chi_{y}(g^i)$.
\item[(2)] $\wti{\chi_y}$ has order $l$ and generates the character group $\wh{G/\ker \chi_y}$, which is isomorphic to $\Zl$. For each $i \in \Zl^*$ and $h \in G/\ker \chi_y$, we have $\sig_i(\wti{\chi_{y}}(h))=\wti{\chi_{y}}(h^i)$.
\end{itemize}
\end{lemma}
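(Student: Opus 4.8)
The plan is to reduce everything to a single structural fact: because $\De$ is a group isomorphism, $\chi_y=\De(y)$ has order exactly $l$ in $\wh{G}$, so all of its values are $l$-th roots of unity, and the Galois automorphism $\sig_i$ acts on the group $\mu_l$ of $l$-th roots of unity simply by $\ze \mapsto \ze^i$. First I would record the order claim in (1): since $\ord(y)=l$ and $\De$ is an isomorphism, $\ord(\chi_y)=l$, hence $\chi_y^l$ is principal and $\chi_y(g)^l=1$ for every $g\in G$; that is, $\chi_y(g)\in\mu_l$ for all $g$.

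Next comes the Galois identity in (1). The key observation I would emphasize is that $\sig_i$ raises \emph{every} element of $\mu_l$ to its $i$-th power, not merely the fixed generator $\zl$: for $\ze=\zl^m\in\mu_l$ we have $\sig_i(\ze)=\sig_i(\zl)^m=\zl^{im}=\ze^i$. Applying this to $\ze=\chi_y(g)\in\mu_l$ and using that $\chi_y$ is a homomorphism gives $\sig_i(\chi_y(g))=\chi_y(g)^i=\chi_y(g^i)$, which is exactly the assertion of (1).

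For (2) I would first identify $G/\ker\chi_y$. By the first isomorphism theorem $\chi_y$ factors through $G/\ker\chi_y$ with image the cyclic group $\chi_y(G)=\mu_l$, so $G/\ker\chi_y\cong\mu_l\cong\Zl$ is cyclic of order $l$, and therefore its character group $\wh{G/\ker\chi_y}$ is again cyclic of order $l$, isomorphic to $\Zl$. Since $\wti{\chi_y}(\rho(g))=\chi_y(g)$ for all $g$, a power $\wti{\chi_y}^{\,k}$ is principal if and only if $\chi_y^k$ is, whence $\ord(\wti{\chi_y})=\ord(\chi_y)=l$; being an element of order $l$ in a cyclic group of order $l$, $\wti{\chi_y}$ generates $\wh{G/\ker\chi_y}$. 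The Galois identity in (2) then reduces directly to (1): writing $h=\rho(g)$ so that $h^i=\rho(g^i)$, we get $\wti{\chi_y}(h^i)=\chi_y(g^i)=\sig_i(\chi_y(g))=\sig_i(\wti{\chi_y}(h))$.

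This argument is essentially a routine verification, so I do not anticipate a genuine obstacle; the only point that deserves care, and the one I would state explicitly, is that $\sig_i$ must be understood as the $i$-th power map on all of $\mu_l$ and not just on $\zl$. That is precisely why establishing $\chi_y(g)\in\mu_l$ (equivalently $\ord(\chi_y)\mid l$) first is the crux, since without it the passage from $\sig_i(\chi_y(g))$ to $\chi_y(g)^i$ is not justified.
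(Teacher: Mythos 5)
Your proposal is correct and follows essentially the same route as the paper: both deduce $\ord(\chi_y)=l$ from $\De$ being an isomorphism, compute $\sig_i(\chi_y(g))=\chi_y(g)^i=\chi_y(g^i)$, and identify $G/\ker\chi_y$ with the cyclic group of $l$-th roots of unity to handle part (2). The only difference is presentational: you make explicit the justification (that $\sig_i$ acts as the $i$-th power map on every $l$-th root of unity, not just on $\zl$) which the paper leaves implicit in the step $\sig_i(\chi_y(g))=\chi_y^i(g)$.
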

\begin{proof}
(1) Since $\De$ is a group isomorphism, $\chi_y$ has order $l$. For each $i \in \Zl^*$ and $g \in G$, we have
$$
\sig_i(\chi_y(g))=\chi_y^i(g)=\chi_{y}(g^i).
$$

(2) By the definition of $\wti{\chi_y}$, we can easily see that $\wti{\chi_y}$ has order $l$. Note that $\chi_y: G \rightarrow \{\zl^j \mid 0 \le j \le l-1\}$ is an epimorphism. Therefore, $G/ \ker \chi_y \cong \{\zl^j \mid 0\le j \le l-1\} \cong \Z_l$. Consequently, the character group $\wh{G/\ker \chi_y}$ is isomorphic to $\Zl$ and has a generator $\wti{\chi_y}$. For each $i \in \Zl^*$ and $h \in G$, we have
    $\sig_i(\wti{\chi_y}(h))=\wti{\chi_{y}}^i(h)=\wti{\chi_{y}}(h^i)$.
\end{proof}

Let $S$ and $T$ be a formally dual pair in $G$. By Remark~\ref{rem-def}(3), $|\chi(S)|^2$ and $|\chi(T)|^2$ are integers for each $\chi \in \wh{G}$. The following proposition extends Proposition~\ref{prop-non}(3).

\begin{proposition}\label{prop-decom}
Let $S$ be a subset of a group $G$, such that $|\chi(S)|^2$ is an integer for each $\chi \in \wh{G}$. Let $y \in G$ be an element of order $l$. Then the following holds:
\begin{itemize}
\item[(1)] For each $i \in \Z_l^*$, we have $|\chi_y(S^{(i)})|^2=|\chi_y(S)|^2$.
\item[(2)] Furthermore, suppose $S$ and $T$ form a formally dual pair in $G$. Then for each $i \in \Z_l^*$, we have $\nu_S(y^i)=\nu_S(y)$ and $\nu_T(y^i)=\nu_T(y)$.
\end{itemize}
\end{proposition}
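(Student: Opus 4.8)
The plan is to establish part (1) directly by transporting everything into the cyclotomic field $\Q(\zl)$ and exploiting the Galois action, and then to obtain part (2) by substituting the conclusion of part (1) into the two defining identities \eqref{eqn-def} and \eqref{eqn-def2} of a formally dual pair.

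First I would prove (1). Writing $S=\sum_{g\in G}a_gg$ with $a_g\in\{0,1\}$, I compute $\chi_y(S^{(i)})=\sum_{g}a_g\,\chi_y(g^i)$, and Lemma~\ref{lem-char}(1) gives $\chi_y(g^i)=\sig_i(\chi_y(g))$ for every $g\in G$. Since $\sig_i\in\Gal(\Q(\zl)/\Q)$ is a field automorphism, it commutes with finite sums, so $\chi_y(S^{(i)})=\sig_i(\chi_y(S))$. It then remains to show $|\sig_i(z)|^2=|z|^2$ for $z=\chi_y(S)$. The key observation is that $\Gal(\Q(\zl)/\Q)$ is abelian and that complex conjugation restricted to $\Q(\zl)$ is exactly $\sig_{-1}$; hence $\sig_i$ commutes with complex conjugation, which yields $|\sig_i(z)|^2=\sig_i(z)\,\overline{\sig_i(z)}=\sig_i(z)\,\sig_i(\bar z)=\sig_i(z\bar z)=\sig_i(|z|^2)$. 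By hypothesis $|\chi_y(S)|^2$ is a rational integer, hence fixed by every element of the Galois group, so $\sig_i(|\chi_y(S)|^2)=|\chi_y(S)|^2$, proving $|\chi_y(S^{(i)})|^2=|\chi_y(S)|^2$.

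For part (2), I would first note that $\chi_{y^i}=\chi_y^i$, because $\De$ is a group homomorphism; consequently $\chi_{y^i}(S)=\sum_g a_g\,\chi_y(g)^i=\sum_g a_g\,\chi_y(g^i)=\chi_y(S^{(i)})$, and part (1) gives $|\chi_{y^i}(S)|^2=|\chi_y(S)|^2$. Since $i\in\Z_l^*$, the element $y^i$ again has order $l$, so $\chi_{y^i}$ is the character attached to $y^i$ under $\De$. Plugging this into \eqref{eqn-def} gives $\frac{|S|^2}{|T|}\nu_T(y^i)=|\chi_{y^i}(S)|^2=|\chi_y(S)|^2=\frac{|S|^2}{|T|}\nu_T(y)$, whence $\nu_T(y^i)=\nu_T(y)$. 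Applying part (1) to $T$ in place of $S$ — which is legitimate since $|\chi(T)|^2$ is an integer for every $\chi\in\wh{G}$ by Remark~\ref{rem-def}(3) — and repeating the argument with \eqref{eqn-def2} yields $\nu_S(y^i)=\nu_S(y)$.

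The only nontrivial step, and therefore the main obstacle, is the identity $|\sig_i(z)|^2=\sig_i(|z|^2)$: it rests on the abelianness of the cyclotomic Galois group, so that $\sig_i$ commutes with complex conjugation $\sig_{-1}$. Everything else is routine bookkeeping with characters and the definition of a formally dual pair. I would emphasize that the hypothesis $|\chi_y(S)|^2\in\Z$ is used in an essential way, as it is precisely what forces this quantity to be Galois-fixed and hence invariant under $\sig_i$.
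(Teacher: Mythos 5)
Your proposal is correct and follows essentially the same route as the paper: both parts rest on the identity $\sig_i(\chi_y(S))=\chi_y(S^{(i)})$ from Lemma~\ref{lem-char}(1), the fact that $\sig_i$ commutes with complex conjugation in the abelian cyclotomic Galois group, and the Galois-invariance of the integer $|\chi_y(S)|^2$, after which part (2) is the same substitution into \eqref{eqn-def} and \eqref{eqn-def2} with the roles of $S$ and $T$ interchanged. No gaps.
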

\begin{proof}
(1) By Lemma~\ref{lem-char}(1), $\chi_y$ has order $l$. For each $i \in \Zl^*$, we have
\begin{align*}
|\chi_y(S)|^2&=\sig_i(|\chi_y(S)|^2)=\sig_i(\chi_y(S))\sig_i(\ol{\chi_y(S)})\\
             &=\sig_i(\chi_y(S))\ol{\sig_i(\chi_y(S))}=\chi_{y}^i(S)\ol{\chi_{y}^i(S)}=\chi_{y}(S^{(i)})\ol{\chi_{y}(S^{(i)})}=|\chi_{y}(S^{(i)})|^2.
\end{align*}

(2) Let $\De$ be the isomorphisms from $G$ to $\wh{G}$, such that $\De(g)=\chi_g$ for each $g \in G$. For each $y \in G$ of order $l$ and each $i \in \Zl^*$, note that $\chi_{y^i}=\De(y^i)=\De(y)^i=\chi_y^i$. Applying (1), we have
$$
|\chi_{y^i}(S)|^2=|\chi_y^i(S)|^2=|\chi_y(S^{(i)})|^2=|\chi_y(S)|^2.
$$
By formal duality and \eqref{eqn-def}, for each $y \in G$ of order $l$ and $i \in \Zl^*$, we have $\nu_T(y^i)=\nu_T(y)$. Interchanging the roles of $S$ and $T$, for each $y \in G$ of order $l$ and $i \in \Zl^*$, we have $\nu_S(y^i)=\nu_S(y)$.
\end{proof}

As an immediate consequence of Proposition~\ref{prop-orbit} and Proposition~\ref{prop-decom}(2), we have the following corollary.

\begin{corollary}\label{cor-formaleven}
Each formally dual set is an even set.
\end{corollary}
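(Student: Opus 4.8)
The plan is to derive the corollary directly from the two preceding results, Proposition~\ref{prop-decom}(2) and Proposition~\ref{prop-orbit}, so no genuinely new argument is required; the work lies entirely in matching their hypotheses. Let $S$ be a formally dual set in $G$, so by Definition~\ref{def-iso} there is a subset $T \subset G$ such that $S$ and $T$ form a formally dual pair. By Remark~\ref{rem-def}(3), $|\chi(S)|^2$ is a nonnegative integer for every $\chi \in \wh{G}$, which is precisely the standing hypothesis needed to invoke Proposition~\ref{prop-decom}.

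Next I would record the elementary observation that links the two propositions: for $y \in G$ of order $l$, the orbit $\orb(y) = \{y^i \mid i \in \Zl^*\}$ is exactly the set over which Proposition~\ref{prop-decom}(2) asserts constancy of the weight enumerator. Concretely, that proposition yields $\nu_S(y^i) = \nu_S(y)$ for every $i \in \Zl^*$, so $\nu_S$ takes the constant value $\nu_S(y)$ on all of $\orb(y)$. Since this holds for every $y \in G$, it holds in particular for every $y \in \{SS^{(-1)}\}$, that is, for every $y$ with $\nu_S(y) > 0$.

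Finally I would apply the orbit characterization of even sets in Proposition~\ref{prop-orbit}: constancy of $\nu_S$ on $\orb(y)$ for each $y \in \{SS^{(-1)}\}$ is exactly the stated criterion for $S$ to be an even set, and the conclusion follows immediately. There is no real obstacle here, since the substantive content has already been packaged into Proposition~\ref{prop-decom}(2) (whose proof exploits the Galois action of $\Zl^*$ on the values of $\chi_y$) and Proposition~\ref{prop-orbit} (whose proof decomposes each orbit into subgroups by inclusion--exclusion). The only point deserving any attention is verifying the integrality hypothesis of Proposition~\ref{prop-decom}, which is guaranteed by formal duality through Remark~\ref{rem-def}(3).
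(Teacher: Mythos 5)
Your proposal is correct and follows exactly the route the paper intends: the paper derives this corollary as an immediate consequence of Proposition~\ref{prop-orbit} and Proposition~\ref{prop-decom}(2), which is precisely the chain you spell out (integrality via Remark~\ref{rem-def}(3), constancy of $\nu_S$ on each $\orb(y)$, then the orbit characterization of even sets). Nothing is missing.
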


We define the \emph{rank} of a formally dual set to be its rank as an even set. In the sense of Definition~\ref{def-equiv}, two equivalent formally dual sets have the same rank. If $S$ is a formally dual set in $G$, which is an even set with respect to subgroups $H_i$, $1 \le i \le r$, then we call $S$ a formally dual set in $G$ with respect to subgroups $H_i$, $1 \le i \le r$.

The following proposition describes a necessary and sufficient condition for a pair of even sets being formally dual.

\begin{theorem}\label{thm-convertion}
Let $G$ be a group. The following two statements are equivalent.
\begin{itemize}
\item[(1)] $S$ and $T$ form a formally dual pair in $G$.
\item[(2)] $S$ and $T$ are even sets of rank $r$, such that
\begin{equation}\label{eqn-SSinv}
SS^{(-1)}=\sum_{i=1}^r \lambda_i H_i
\end{equation}
and
\begin{equation}\label{eqn-TTinv}
TT^{(-1)}=\sum_{i=1}^{r} \wti{\lambda_i} \wti{H_i},
\end{equation}
where $H_i$, $1 \le i \le r$, are subgroups of $G$ and $\wti{\la_i}=\frac{\lambda_i|G||H_i|}{|S|^3}$, for each $1 \le i \le r$.
\end{itemize}
\end{theorem}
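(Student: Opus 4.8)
The plan is to prove both implications by passing freely between the character values of $SS^{(-1)}$ and the coefficients of $TT^{(-1)}$. Everything is driven by one elementary identity: for a subgroup $H \leqslant G$ and $y \in G$, orthogonality of characters gives $\chi_y(H)=|H|$ when $\chi_y$ is principal on $H$ (that is, $y \in \wti{H}$) and $\chi_y(H)=0$ otherwise; equivalently, the function $y \mapsto \chi_y(H)$ is $|H|$ times the indicator of the subgroup $\wti{H}=\De^{-1}(H^\perp)$. Since $\chi_y(SS^{(-1)})=|\chi_y(S)|^2$ and $\nu_T(y)=[TT^{(-1)}]_y$, the defining relation \eqref{eqn-def} of a formally dual pair will convert a subgroup decomposition of $SS^{(-1)}$ into a subgroup decomposition of $TT^{(-1)}$ supported on the dual subgroups $\wti{H_i}$.

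For $(1)\Rightarrow(2)$ I would first invoke Corollary~\ref{cor-formaleven} to know $S$ is an even set, and fix a \emph{minimal} decomposition $SS^{(-1)}=\sum_{i=1}^r \la_i H_i$ with distinct subgroups $H_i$ and nonzero $\la_i$, so $r=\rank(S)$. Applying $\chi_y$ and the identity above gives $|\chi_y(S)|^2=\sum_{i\,:\,y\in\wti{H_i}}\la_i|H_i|$. Rewriting the left side by \eqref{eqn-def} yields $\nu_T(y)=[TT^{(-1)}]_y=\tfrac{|T|}{|S|^2}\sum_{i\,:\,y\in\wti{H_i}}\la_i|H_i|$ for every $y$, and the right side is exactly the coefficient of $y$ in $\sum_{i=1}^r \tfrac{|T|\,\la_i|H_i|}{|S|^2}\,\wti{H_i}$; comparing coefficients gives $TT^{(-1)}=\sum_{i=1}^r \tfrac{|T|\,\la_i|H_i|}{|S|^2}\,\wti{H_i}$. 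Substituting $|G|=|S||T|$ from Proposition~\ref{prop-non}(1) turns each coefficient into $\wti{\la_i}=\tfrac{\la_i|G||H_i|}{|S|^3}$, which is \eqref{eqn-TTinv}. Because $N\mapsto\wti{N}$ is a bijection of the subgroup lattice, the $\wti{H_i}$ are distinct and the $\wti{\la_i}$ nonzero, so $\rank(T)\le r$; running the same computation with $S$ and $T$ interchanged (legitimate by Remark~\ref{rem-def}(2)) gives $\rank(S)\le\rank(T)$, whence $\rank(T)=r$ and the decomposition is minimal.

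For $(2)\Rightarrow(1)$ I would first recover $|G|=|S||T|$ from the hypotheses alone: evaluating the principal character on \eqref{eqn-SSinv} gives $|S|^2=\sum_{i=1}^r\la_i|H_i|$, while the coefficient of the identity in \eqref{eqn-TTinv} (the identity lies in every $\wti{H_i}$) gives $|T|=\nu_T(1)=\sum_{i=1}^r\wti{\la_i}=\tfrac{|G|}{|S|^3}\sum_{i=1}^r\la_i|H_i|=\tfrac{|G|}{|S|}$, so $|G|=|S||T|$. Then for arbitrary $y$ the identity applied to \eqref{eqn-SSinv} gives $|\chi_y(S)|^2=\sum_{i\,:\,y\in\wti{H_i}}\la_i|H_i|$, and \eqref{eqn-TTinv} gives $\nu_T(y)=\sum_{i\,:\,y\in\wti{H_i}}\wti{\la_i}$. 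Since $\tfrac{|S|^2}{|T|}\wti{\la_i}=\tfrac{|S|^2}{|T|}\cdot\tfrac{\la_i|G||H_i|}{|S|^3}=\la_i|H_i|$ by $|G|=|S||T|$, summing over $\{i:y\in\wti{H_i}\}$ yields $\tfrac{|S|^2}{|T|}\nu_T(y)=|\chi_y(S)|^2$, which is \eqref{eqn-def}; by Remark~\ref{rem-def}(2) the companion relation \eqref{eqn-def2} follows, so $S$ and $T$ form a formally dual pair.

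The genuinely delicate point is the rank bookkeeping in $(1)\Rightarrow(2)$: showing that the decomposition of $TT^{(-1)}$ manufactured from a minimal decomposition of $SS^{(-1)}$ is itself minimal of the same rank $r$. This rests on two structural facts—that $N\mapsto\wti{N}=\De^{-1}(N^\perp)$ is an order-reversing bijection of the subgroup lattice (so distinctness of the subgroups and nonvanishing of the coefficients are preserved), and that the entire construction is symmetric in $S$ and $T$ (yielding the reverse inequality). A secondary bookkeeping subtlety is that $\wti{H_i}$ depends on the fixed isomorphism $\De$; one must use the same $\De$ throughout, which is harmless because the proposition preceding Example~\ref{exam-tri} shows the choice of $\De$ does not affect formal duality.
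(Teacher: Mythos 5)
Your proposal is correct and follows essentially the same route as the paper: the orthogonality relation $\chi_y(H_i)=|H_i|\cdot[y\in\wti{H_i}]$, comparison of coefficients via the index set $\{i: y\in\wti{H_i}\}$, and the rank bookkeeping by interchanging the roles of $S$ and $T$. Your only addition is to spell out the ``analogous argument'' for $(2)\Rightarrow(1)$, where you rightly derive $|G|=|S||T|$ from the hypotheses of (2) rather than citing Proposition~\ref{prop-non}(1) (which presupposes formal duality); this is a welcome clarification but not a different method.
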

\begin{proof}
We first prove that (1) implies (2). Since $S$ and $T$ form a formally dual pair, then by Corollary~\ref{cor-formaleven}, $S$ and $T$ are both even sets. Suppose $S$ has rank $r$, then we can assume
$$
SS^{(-1)}=\sum_{i=1}^r \lambda_i H_i,
$$
where $H_i$, $1 \le i \le r$, are subgroups of $G$ and $\la_i$, $1 \le i \le r$, are nonzero. Note that for each $1 \le i \le r$,
\begin{equation}\label{eqn-ortho}
\chi_y(H_i)=\begin{cases}
                 |H_i| & \mbox{if $y \in \wti{H_i}$,}\\
                  0    & \mbox{otherwise.}
            \end{cases}
\end{equation}
For each $y \in G$, we define a subset $I_y \subset \{1,2,\ldots,r\}$, such that $y \in \big(\bigcap_{i \in I_y} \wti{H_i}\big) \sm \big(\bigcup_{j \notin I_y} \wti{H_j}\big)$. Combining \eqref{eqn-def}, \eqref{eqn-SSinv} and \eqref{eqn-ortho}, we have
$$
\nu_T(y)=\frac{|T|}{|S|^2}|\chi_y(S)|^2 =\frac{|G|}{|S|^3}\sum_{i=1}^r \lambda_i \chi_y(H_i)=\frac{|G|}{|S|^3}\sum_{i \in I_y} \lambda_i|H_i|=\sum_{i \in I_y} \wti{\la_i},
$$
which implies
$$
TT^{(-1)}=\sum_{i=1}^{r} \wti{\lambda_i} \wti{H_i}.
$$
Thus, the rank of $T$ is no more than $r$. By interchanging the roles of $S$ and $T$, we can see the ranks of $S$ and $T$ are both $r$.

Conversely, assume (2) holds. An analogous argument shows that (2) implies (1).
\end{proof}

\begin{remark}
Let $S$ and $T$ be a formally dual pair in $G$. By Remark~\ref{rem-def}(2), $SS^{(-1)}$ determines $TT^{(-1)}$, and vice versa. Based on the concept of even sets, \eqref{eqn-SSinv} and \eqref{eqn-TTinv} provide an explicit description about how $SS^{(-1)}$ and $TT^{(-1)}$ interact.
\end{remark}

The following lemma describes more structural restrictions on a primitive formally dual set.

\begin{lemma}\label{lem-prieven}
Let $S$ and $T$ be a primitive formally dual pair in $G$, such that $SS^{(-1)}=\sum_{i=1}^r \lambda_i H_i$, where $H_i$ is a subgroup of $G$ for each $1 \le i \le r$. Then we have
\begin{itemize}
\item[(1)] The group generated by $H_1, H_2, \ldots, H_r$ is $G$.
\item[(2)] $\bigcap_{i=1}^r H_i= \{1\}$.
\item[(3)] $\sum_{i=1}^r\la_i=|S|$ and $\sum_{i=1}^r\la_i|H_i|=|S|^2$
\end{itemize}
\end{lemma}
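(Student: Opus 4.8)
The plan is to treat the three parts by increasing depth: (3) by directly reading off two coefficients from the even-set identity, (1) by a support argument combined with the primitivity of $S$, and (2) by dualizing to $T$ via Theorem~\ref{thm-convertion} and applying (1) to $T$. Throughout I use that $SS^{(-1)}=\sum_{i=1}^r\la_iH_i$ has nonnegative coefficients, so there is no cancellation on the left-hand side.

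For (3) I would extract two scalars from $SS^{(-1)}=\sum_{i=1}^r\la_iH_i$. Comparing the coefficient of the identity $1$ on both sides, the left contributes $\nu_S(1)=|S|$ while the right contributes $\sum_{i=1}^r\la_i$ (since $1\in H_i$ for every $i$), giving $\sum_{i=1}^r\la_i=|S|$. Applying the principal character $\chi_0$ to both sides gives $\chi_0(S)\chi_0(S^{(-1)})=|S|^2$ on the left and $\sum_{i=1}^r\la_i|H_i|$ on the right, hence $\sum_{i=1}^r\la_i|H_i|=|S|^2$. This part is pure bookkeeping.

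For (1) set $K=\lan H_1,\ldots,H_r\ran$. Since each $H_i\subseteq K$, the identity forces $\nu_S(g)=0$ for every $g\notin K$; that is, every difference $s_1s_2^{-1}$ with $s_1,s_2\in S$ lies in $K$, so fixing any $s_0\in S$ yields $S\subseteq s_0K$. If $K$ were a proper subgroup, then $S$ would be contained in a coset of a proper subgroup, contradicting the primitivity of $S$; hence $K=G$. For (2) the idea is to transport (1) through duality. Because $S$ and $T$ form a formally dual pair, Theorem~\ref{thm-convertion} gives $TT^{(-1)}=\sum_{i=1}^r\wti{\la_i}\wti{H_i}$ with $\wti{H_i}=\De^{-1}(H_i^\perp)$ and each $\wti{\la_i}\neq0$. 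Applying (1) to the primitive set $T$ yields $\lan\wti{H_1},\ldots,\wti{H_r}\ran=G$. Since $\De$ is an isomorphism it commutes with joins, so $\lan\wti{H_1},\ldots,\wti{H_r}\ran=\De^{-1}\big(\lan H_1^\perp,\ldots,H_r^\perp\ran\big)$; combined with the annihilator identity $\lan H_1^\perp,\ldots,H_r^\perp\ran=\big(\bigcap_{i=1}^r H_i\big)^\perp$ this becomes $\De^{-1}\big((\bigcap_{i=1}^r H_i)^\perp\big)=G$. Therefore $\big(\bigcap_{i=1}^r H_i\big)^\perp=\wh{G}$, and since the only subgroup on which every character is principal is the trivial one, $\bigcap_{i=1}^r H_i=\{1\}$.

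The main obstacle I anticipate is (2): parts (1) and (3) are essentially computational, whereas (2) genuinely requires recognizing that the intersection $\bigcap_i H_i$ on the $S$-side is dual to the join $\lan\wti{H_i}\ran$ on the $T$-side, and that it is the primitivity of $T$, not of $S$, that controls it. The one point needing care is the annihilator identity $\lan H_1^\perp,\ldots,H_r^\perp\ran=\big(\bigcap_i H_i\big)^\perp$, which I would justify from the order-reversing bijection $H\mapsto H^\perp$ on the subgroup lattice together with $(H^\perp)^\perp=H$, reducing to $(\lan A^\perp,B^\perp\ran)^\perp=A^{\perp\perp}\cap B^{\perp\perp}=A\cap B$ and then taking perpendiculars.
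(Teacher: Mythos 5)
Your proposal is correct and follows essentially the same route as the paper: part (3) by reading off the identity coefficient and the principal character value, part (1) by noting that all differences of $S$ lie in $\lan H_1,\ldots,H_r\ran$ so primitivity forces this join to be $G$ (the paper phrases this via \cref{lem-priset}, whose proof is exactly your coset argument), and part (2) by passing to $T$ via \cref{thm-convertion} and using the annihilator duality between intersections and joins. The only cosmetic difference is that the paper first deduces $\bigcap_i \wti{H_i}=\{1\}$ from $\lan H_i\ran=G$ and then interchanges $S$ and $T$, whereas you apply (1) directly to $T$ and translate back; these are mirror images of the same argument.
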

\begin{proof}
(1) Let $H$ be the group generated by $H_1, H_2, \ldots, H_r$. Assume $H$ is a proper subgroup of $G$. Since
$$
\{SS^{(-1)}\} \subset H,
$$
for each nonprincipal $\chi \in \wh{G}$, which is principal on $H$, we have $|\chi(S)|^2=|S|^2$. By Lemma~\ref{lem-priset}, $S$ is not a primitive subset, which is a contradiction.

(2) By Theorem~\ref{thm-convertion}, we have
$$
TT^{(-1)}=\sum_{i=1}^{r} \wti{\lambda_i} \wti{H_i},
$$
where $\wti{\la_i}=\frac{\lambda_i|G||H_i|}{|S|^3}$, for each $1 \le i \le r$. Since the group generated by $H_1, H_2, \ldots, H_r$ is the whole group $G$, by definition of $\wti{H_i}$, we have $\bigcap_{i=1}^r \wti{H_i}=\{1\}$. By interchanging the roles of $S$ and $T$, we conclude that $\bigcap_{i=1}^r H_i= \{1\}$.

(3) Note that $|S|=[SS^{(-1)}]_1=\sum_{i=1}^r\la_i$ and $|S|^2=|[SS^{(-1)}]|=\sum_{i=1}^r\la_i|H_i|$, the conclusion follows.
\end{proof}

For a primitive formally dual set with respect to a chain of subgroups, we can derive much more detailed information.

\begin{proposition}\label{prop-chain}
Let $S$ and $T$ be a primitive formally dual pair in $G$. Let $S$ be a primitive formally dual set in $G$, with respect to a chain of subgroups $H_1 \lneqq H_2 \lneqq \cdots \lneqq H_r$. Then $T$ is a primitive formally dual set in $G$, with respect to a chain of subgroups $\wti{H_r} \lneqq \wti{H_{r}}_{-1} \lneqq \cdots \lneqq \wti{H_1}$. We write $SS^{(-1)}=\sum_{i=1}^r \lambda_i H_i$ and $TT^{(-1)}=\sum_{i=1}^r \wti{\la_i} \wti{H_i}$. Furthermore, the following holds.
\begin{itemize}
\item[(1)] $H_1=\wti{H_r}= \{1\}$ and $H_r=\wti{H_1}=G$.
\item[(2)] For each $2 \le l \le r$, we have $0 \le \sum_{i=l}^r\la_i < |S|$. For each $1 \le l \le r-1$, we have $ 0 \le \sum_{i=1}^l\la_i|H_i| < |S|^2$. Moreover,
$$
\la_1=|S|, \quad \la_r=1, \quad |S| \Big| \la_i|H_i|, \; \forall 1 \le i \le r.
$$
\item[(3)] For each $1 \le l \le r-1$, we have $0 \le \sum_{i=1}^l\wti{\la_i} < |T|$. For each $2 \le l \le r$, we have $0 \le \sum_{i=l}^r\wti{\la_i}|\wti{H_i}| < |T|^2$. Moreover,
$$
\wti{\la_1}=1, \quad \wti{\la_r}=|T|, \quad |T| \Big| \wti{\la_i}|\wti{H_i}|, \; \forall 1 \le i \le r.
$$
\item[(4)] $|G|$ is a square and $|S|=|T|=\sqrt{|G|}$.
\end{itemize}
\end{proposition}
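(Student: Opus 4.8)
The plan is to read everything off the dual decomposition supplied by Theorem~\ref{thm-convertion}, namely $TT^{(-1)}=\sum_{i=1}^r \wti{\la_i}\wti{H_i}$ with $\wti{\la_i}=\la_i|G||H_i|/|S|^3$, exploiting that along a chain the weight enumerator is a step function whose jumps are the $\la_i$. First I would settle the chain structure and part~(1): since $H\mapsto \wti{H}=\De^{-1}(H^\perp)$ reverses inclusions and sends $\{1\}\mapsto G$ and $G\mapsto\{1\}$, the subgroups $\wti{H_i}$ form the reversed chain $\wti{H_r}\lneqq\cdots\lneqq\wti{H_1}$. That $H_1=\{1\}$ and $H_r=G$ follows because, by Lemma~\ref{lem-prieven}(1),(2), the $H_i$ generate $G$ and intersect in $\{1\}$, and a chain is generated by its top and intersects in its bottom; dually $\wti{H_r}=\{1\}$ and $\wti{H_1}=G$.

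The engine of the proof is a level analysis. For a chain one has $\nu_S(y)=\sum_{i:\,y\in H_i}\la_i=\sum_{i=l}^r\la_i=:P_l$ whenever $y\in H_l\sm H_{l-1}$, so $\nu_S$ is constant on each annulus $H_l\sm H_{l-1}$, and these are nonempty since the chain is strict. Here $P_1=\nu_S(1)=|S|$, $P_{r+1}=0$, and $\la_l=P_l-P_{l+1}$. Each $P_l$ is a nonnegative integer, and for $l\ge 2$ a level-$l$ element is nonidentity, so Lemma~\ref{lem-pri}(1) together with primitivity forces $P_l<|S|$. This already gives the first chain of inequalities in part~(2), $0\le\sum_{i=l}^r\la_i<|S|$ for $2\le l\le r$, and, taking $l=2$, yields $0<\la_1=|S|-P_2\le|S|$.

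The heart of the argument, and the step I expect to be the only genuine obstacle, is to upgrade $|G|=|S|\,|T|$ to $|S|=|T|$: the global identities $\sum\la_i=|S|$ and $\sum\la_i|H_i|=|S|^2$ of Lemma~\ref{lem-prieven}(3) merely reproduce $|G|=|S||T|$, so the extra leverage must come from the extreme coefficient $\la_1$. Running the same level analysis on $T$ shows its top coefficient $\wti{\la_1}$, the value of $\nu_T$ on $G\sm\wti{H_2}$, is a nonzero nonnegative integer, hence $\wti{\la_1}\ge 1$; substituting $\wti{\la_1}=\la_1|G|/|S|^3=\la_1|T|/|S|^2$ gives $\la_1\ge|S|^2/|T|$. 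Combined with $\la_1\le|S|$ this yields $|S|\le|T|$, and the symmetric argument interchanging $S$ and $T$ (legitimate because $\wti{\wti{H_i}}=H_i$, the inner product being symmetric) gives $|T|\le|S|$. Hence $|S|=|T|=\sqrt{|G|}$ and $|G|$ is a square, which is part~(4).

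Once $|S|=|T|$ is in hand the remaining assertions fall out by back-substitution. The squeeze becomes $|S|\le\la_1\le|S|$, so $\la_1=|S|$ and therefore $\wti{\la_1}=1$; the mirror statements for $T$ give $\wti{\la_r}=|T|$ and $\la_r=1$. The divisibility $|S|\mid\la_i|H_i|$ is exactly the integrality of $\wti{\la_i}=\la_i|H_i|/|S|$, and symmetrically $|T|\mid\wti{\la_i}|\wti{H_i}|$. The two remaining inequalities are translations of the level bounds under the identity $\wti{\la_i}|\wti{H_i}|=\la_i|S|$: the partial sums $\sum_{i=1}^l\wti{\la_i}=\frac{1}{|S|}\sum_{i=1}^l\la_i|H_i|$ are the level values of $\nu_T$, giving part~(3)'s first inequality and hence part~(2)'s second, while $\sum_{i=l}^r\wti{\la_i}|\wti{H_i}|=|S|\,P_l$ is controlled by the $P_l$-estimates, giving part~(3)'s second inequality. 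I would execute the steps in the order (1), the level bounds, (4), and then the exact values and divisibilities.
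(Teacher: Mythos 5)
Your proof is correct and follows essentially the same route as the paper's: both rest on the annulus-by-annulus evaluation of the weight enumerators along the chain, the primitivity bounds $\nu_S(y)<|S|$, $\nu_T(y)<|T|$ for $y\neq 1$, and the squeeze on the extreme coefficients $\la_1,\la_r,\wti{\la_1},\wti{\la_r}$ via $\wti{\la_i}=\la_i|G||H_i|/|S|^3$. The only cosmetic difference is that the paper obtains $0\le\sum_{i=1}^l\la_i|H_i|<|S|^2$ directly from character sums and Lemma~\ref{lem-priset}, whereas you obtain the equivalent bound through $\nu_T$ and Lemma~\ref{lem-pri}(1); by formal duality these are the same estimate.
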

\begin{proof}
(1) Note that $H_1 \lneqq H_2 \lneqq \cdots \lneqq H_r$ and $\wti{H_r} \lneqq \wti{H_{r}}_{-1} \lneqq \cdots \lneqq \wti{H_1}$. Then the conclusion follows from Lemma~\ref{lem-prieven}(1)(2).

Now we proceed to prove (2) and (4). For $2 \le l \le r$, choose $g \in H_l$ such that $g \notin H_{l-1}$. Together with the primitivity of $S$, we have $0 \le [SS^{(-1)}]_g=\sum_{i=l}^r \la_i < |S|$. For $1 \le l \le r-1$, let $\chi \in \wh{G}$ be a character which is principal on $H_l$ and nonprincipal on $H_{l+1}$. Together with the primitivity of $S$, we have $0 \le |\chi(S)|^2=\sum_{i=1}^l\la_i|H_i| < |S|^2$. Thus, we have proved the first two statements of (2), which imply that $1 \le \la_1, \la_r \le |S|$. By interchanging the roles of $S$ and $T$, we have $1 \le \wti{\la_1}, \wti{\la_r} \le |T|$. Recall that $\wti{\la_i}=\frac{\la_i|G||H_i|}{|S|^3}$ for each $1 \le i \le r$. We have
$$
1 \leq \wti{\la_1} = \frac{\la_1|G||H_1|}{|S|^3}=\frac{\la_1|G|}{|S|^3} \leq \frac{|G|}{|S|^2}
$$
and
$$
1 \leq \la_r = \frac{\wti{\la_r}|S|^3}{|G||H_r|}=\frac{\wti{\la_r}|S|^3}{|G|^2} \leq \frac{|S|^2}{|G|},
$$
which implies $\la_1=|S|$, $\la_r=1$ and $|G|=|S|^2$. Together with $|G|=|S||T|$, we have $|G|$ is a square and $|S|=|T|=\sqrt{|G|}$. Since $\wti{\la_i}=\frac{\la_i|G||H_i|}{|S|^3}=\frac{\la_i|H_i|}{|S|}$, we have $|S| \Big| \la_i|H_i|$, for each $1 \le i \le r$.

(3) can be proved in a similar manner as above.
\end{proof}

\begin{remark}\label{rem-equalsize}
We observe that Proposition~\ref{prop-chain}(4) still holds under a weaker condition:
\begin{equation}\label{eqn-weaker}
H_1=\{1\}, \quad H_r=G, \quad |\bigcap_{i=2}^r H_i|>1.
\end{equation}
Note that we have a chain of subgroups $H_1 \lneqq H_2 \lneqq \cdots \lneqq H_r$ in Proposition~\ref{prop-chain}, which implies \eqref{eqn-weaker}. Moreover, the primitive formally dual pairs derived in Theorems~\ref{thm-RDS} and~\ref{thm-GRDS} fit into the model of Proposition~\ref{prop-chain}. On the other hand, the primitive formally dual pairs in Theorem~\ref{thm-SHDS} seem to be more complicated.
\end{remark}

\subsection{Primitive formally dual sets with small rank}\label{sec:fd_small_rank}

In this subsection we focus on primitive formally dual sets with small rank. We give a characterization of primitive formally dual sets with rank at most three. In particular, the rank three case reveals the relation between a primitive formally dual set and an $(n,n,n,1)$-RDS.

Let $S$ be a primitive formally dual set in $G$, with respect to $H_i$, $1 \le i \le r$. The following definition provides a crucial viewpoint on the relation among the subgroups $H_i$.

\begin{definition}
Let $H_i$, $1 \le i \le r$, be a collection of subgroups in $G$. A subgroup $H_j$ is called maximal, if it is not a subgroup of any other $H_i$. A subgroup $H_j$ is called minimal, if it does not contain any other $H_i$ as a subgroup.
\end{definition}

The concept of maximal or minimal subgroups reveals more structural information about primitive formally dual sets.

\begin{lemma}\label{lem-twomaximal}
Let $S$ be a primitive formally dual set in $G$, with respect to $H_i$, $1 \le i \le r$. Then $H_i$, $1 \le i \le r$, do not have exactly two maximal subgroups or exactly two minimal subgroups.
\end{lemma}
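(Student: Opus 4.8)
The plan is to first collapse the statement to a single case using the duality built into Theorem~\ref{thm-convertion}, and then rule out that case by projecting onto a suitable quotient, where the relevant containment becomes a rigid combinatorial condition. Concretely, I would first reduce to showing that $H_1,\dots,H_r$ cannot have \emph{exactly two maximal subgroups}; the minimal-subgroup assertion then follows for free. Indeed, by Theorem~\ref{thm-convertion}, $T$ is also a primitive formally dual set and $TT^{(-1)}=\sum_{i=1}^r\wti{\la_i}\wti{H_i}$ with every $\wti{\la_i}\ne 0$, so $T$ is an even set with respect to $\wti{H_1},\dots,\wti{H_r}$. Since $H\mapsto\wti{H}=\De^{-1}(H^{\perp})$ is an inclusion-reversing bijection on subgroups of $G$, maximal members of $\{H_i\}$ correspond to minimal members of $\{\wti{H_i}\}$ and vice versa. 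Hence if $\{H_i\}$ had exactly two minimal subgroups, then $\{\wti{H_i}\}$ would have exactly two maximal subgroups, contradicting the two-maximal case applied to $T$.

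So I would assume toward a contradiction that $\{H_i\}$ has exactly two maximal subgroups $M_1$ and $M_2$. Since every $H_i$ sits below some maximal member, each $H_i\subseteq M_1$ or $H_i\subseteq M_2$; consequently $\langle M_1,M_2\rangle=\langle H_1,\dots,H_r\rangle=G$ by Lemma~\ref{lem-prieven}(1). If $G$ itself were one of the $H_i$, it would be the unique maximal subgroup, so both $M_1$ and $M_2$ are proper in $G$. The key observation is that $\{SS^{(-1)}\}\subseteq M_1\cup M_2$: if $g\notin M_1\cup M_2$, then no $H_i$ contains $g$, whence $[SS^{(-1)}]_g=\sum_{i:\,g\in H_i}\la_i=0$. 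Equivalently, $s_1s_2^{-1}\in M_1\cup M_2$ for all $s_1,s_2\in S$.

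Finally I would pass to the quotient $\bar G=G/K$ with $K=M_1\cap M_2$. Because $G$ is abelian with $M_1M_2=G$ and $M_1\cap M_2=K$, the images $\bar M_1=M_1/K$ and $\bar M_2=M_2/K$ give an internal direct product $\bar G=\bar M_1\times\bar M_2$, in which $M_1\cup M_2$ collapses exactly to the two axes. Writing $\rho\colon G\to\bar G$ for the projection and $\rho(s)=(a(s),b(s))\in\bar M_1\times\bar M_2$, the condition $s_1s_2^{-1}\in M_1\cup M_2$ becomes the requirement that for all $s_1,s_2\in S$ we have $a(s_1)=a(s_2)$ or $b(s_1)=b(s_2)$; that is, any two points of $P=\{(a(s),b(s)):s\in S\}$ agree in some coordinate. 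An elementary grid argument then pins $P$ to a single line: if two points differ in the first coordinate they must share the second, say $y_0$, after which every further point is forced to have second coordinate $y_0$. Hence either all $a(s)$ coincide or all $b(s)$ coincide, so $\rho(S)$, and therefore $S$, lies in a coset of $M_2$ or of $M_1$. As $M_1,M_2$ are proper, this contradicts the primitivity of $S$ by Lemma~\ref{lem-priset}.

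The main obstacle is conceptual rather than computational: recognizing that the innocuous containment $\{SS^{(-1)}\}\subseteq M_1\cup M_2$ becomes rigid only after quotienting by $M_1\cap M_2$, where $M_1\cup M_2$ degenerates to the coordinate axes of $\bar M_1\times\bar M_2$ and the difference condition turns into the ``shared coordinate'' property whose sole solutions are single rows or columns. The points that genuinely need care are the internal direct product decomposition $\bar G=\bar M_1\times\bar M_2$ (which is where commutativity of $G$ enters) and the pull-back of a coset of $\bar M_j$ to a coset of the proper subgroup $M_j$; the grid claim itself is a two-line case analysis, and notably the argument never uses the minimality of the rank $r$.
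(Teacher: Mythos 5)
Your proof is correct. The reduction of the two-minimal case to the two-maximal case via Theorem~\ref{thm-convertion} and the inclusion-reversing correspondence $H\mapsto\wti{H}$ is exactly what the paper does. For the two-maximal case the combinatorial heart is also the same dichotomy, but you package it differently: the paper works directly in $G$, picking $z\in S$ and arguing that either $S\subseteq zH_2$ (killing primitivity) or some $x\in S$ has $xz^{-1}\in H_1\sm H_2$, and symmetrically for $H_2$, whence $xy^{-1}=(xz^{-1})(zy^{-1})\notin H_1\cup H_2$ contradicts $\{SS^{(-1)}\}\subseteq H_1\cup H_2$. You instead quotient by $K=M_1\cap M_2$, use $M_1M_2=G$ (which you correctly extract from Lemma~\ref{lem-prieven}(1), and which the paper never needs) to get $\bar G=\bar M_1\times\bar M_2$, and run the shared-coordinate grid argument there. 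Your route costs a little extra machinery --- the direct-product decomposition, the explicit check that $M_1,M_2$ are proper, the pull-back of cosets --- but it buys a cleaner picture of \emph{why} the containment $\{SS^{(-1)}\}\subseteq M_1\cup M_2$ is rigid, and your grid lemma is the paper's three-element argument in coordinates. All the supporting steps (the containment of $\{SS^{(-1)}\}$ in the union of the two maximal members, the internal direct product, the observation that rank-minimality of $r$ is never used) check out.
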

\begin{proof}
Suppose otherwise that $H_1,H_2$ are exactly the two maximal subgroups among $H_i$, $1 \le i \le r$. Thus, for each $s_1,s_2 \in S$, we must have $s_1s_2^{-1} \in H_1 \cup H_2$. Given a $z \in S$, we claim that there exist $x\in S$ such that $ xz^{-1} \in H_1 \sm H_2$. Suppose otherwise, that $xz^{-1} \in H_2$ for each $x \in S$, then $S$ is contained in the coset $zH_2$ and $S$ is not primitive. Similarly, we can show that there must be $y\in S$ such that $zy^{-1} \in H_2 \sm H_1$. However, we have
$$
xy^{-1} = (xz^{-1})(zy^{-1}) \notin H_1\cup H_2,
$$
which yields a contradiction.

Suppose otherwise that $H_1,H_2$ are exactly the two minimal subgroups among $H_i$, $1 \le i \le r$. Let $S$ and $T$ be a primitive formally dual pair in $G$. By Theorem~\ref{thm-convertion}, $T$ is a primitive formally dual set in $G$, with respect to $\wti{H_i}$, $1 \le i \le r$. which contains exactly two maximal subgroups $\wti{H_1}, \wti{H_2}$. This leads to a contradiction against the above conclusion.
\end{proof}

We shall see that it is easy to characterize primitive formally dual sets of rank one and two.

\begin{theorem}
The only primitive formally dual set of rank one is $\{1\}$ in the trivial group $G = \{1\}$.
\end{theorem}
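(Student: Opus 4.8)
The plan is to reduce everything to Lemma~\ref{lem-prieven}, which already isolates the two structural constraints a primitive formally dual set must obey. Let $S$ be a primitive formally dual set of rank one in $G$. By definition there exists some $T$ such that $S$ and $T$ form a primitive formally dual pair in $G$, and the rank one hypothesis means that we may write
$$
SS^{(-1)}=\lambda_1 H_1,
$$
for a single subgroup $H_1 \leqslant G$ and a single nonzero integer $\lambda_1$. Thus Lemma~\ref{lem-prieven} applies with $r=1$.

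First I would apply Lemma~\ref{lem-prieven}(1), which asserts that the group generated by $H_1,\dots,H_r$ equals $G$; when $r=1$ the generated group is just $H_1$ itself, so this forces $H_1=G$. Next I would apply Lemma~\ref{lem-prieven}(2), which asserts that $\bigcap_{i=1}^{r} H_i=\{1\}$; when $r=1$ the intersection is again simply $H_1$, so this forces $H_1=\{1\}$. Combining the two identities gives $G=H_1=\{1\}$, so $G$ is the trivial group, and hence the only nonempty subset is $S=\{1\}$. That this $S$ is indeed a primitive formally dual set is exactly Example~\ref{exam-tri}, which closes the argument.

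There is essentially no computational obstacle here: the entire content is the observation that, for $r=1$, the generation statement of Lemma~\ref{lem-prieven}(1) and the trivial-intersection statement of Lemma~\ref{lem-prieven}(2) collide on the \emph{same} subgroup $H_1$, forcing it to be simultaneously all of $G$ and $\{1\}$. The only point worth verifying carefully is that the hypotheses of Lemma~\ref{lem-prieven} genuinely hold, namely that a primitive formally dual \emph{set} of rank one is part of a primitive formally dual \emph{pair}; this is immediate from Definition~\ref{def-iso}, which defines a primitive formally dual set precisely as one admitting such a partner $T$.
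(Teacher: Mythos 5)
Your proposal is correct and follows exactly the paper's own argument: the paper also applies Lemma~\ref{lem-prieven}(1) and (2) to the single subgroup $H_1$ to conclude $H_1=G=\{1\}$ and hence $S=\{1\}$. Your additional remarks on checking the hypotheses and citing Example~\ref{exam-tri} for existence are fine but add nothing beyond the paper's reasoning.
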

\begin{proof}
Let $S$ be a primitive formally dual set of rank one in $G$, with respect to $H$, satisfying $SS^{(-1)} = \la H$. By Lemma~\ref{lem-prieven}(1)(2), we have $H=G=\{1\}$. Thus, $S=\{1\}$.
\end{proof}

\begin{theorem}
There exists no primitive formally dual set of rank two.
\end{theorem}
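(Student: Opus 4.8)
The plan is to argue by contradiction. Suppose $S$ is a primitive formally dual set of rank two in $G$, so that
$$
SS^{(-1)}=\la_1 H_1+\la_2 H_2
$$
for two \emph{distinct} subgroups $H_1,H_2 \leqslant G$ and nonzero integers $\la_1,\la_2$. The whole argument rests on forcing $H_1$ and $H_2$ into a chain and then reading off a numerical contradiction from the structural results already available.

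First I would determine the relative position of $H_1$ and $H_2$. If neither subgroup is contained in the other, then both $H_1$ and $H_2$ are maximal among the two-element collection $\{H_1,H_2\}$, so this collection has exactly two maximal subgroups, which Lemma~\ref{lem-twomaximal} forbids. Hence one of the two subgroups contains the other, and after relabeling I may assume $H_1 \lneqq H_2$, so the subgroups form a chain. Next I would pin the chain down completely: by Lemma~\ref{lem-prieven}(1) the group generated by $H_1$ and $H_2$ is all of $G$, forcing $H_2=G$, and by Lemma~\ref{lem-prieven}(2) we have $H_1 \cap H_2=\{1\}$, forcing $H_1=\{1\}$. Thus $S$ is a primitive formally dual set with respect to the chain $\{1\}=H_1 \lneqq H_2=G$, and
$$
SS^{(-1)}=\la_1 \{1\}+\la_2 G.
$$

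The final step produces the contradiction. Since $S$ is a primitive formally dual set with respect to a chain, Proposition~\ref{prop-chain}(2) applies with $r=2$ and gives $\la_1=|S|$. On the other hand, Lemma~\ref{lem-prieven}(3) gives $\la_1+\la_2=|S|$. Combining the two equalities yields $\la_2=0$, contradicting the requirement that $\la_2$ be a nonzero integer in a rank-two decomposition. This completes the argument.

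Since all the heavy structural work is already carried out in Lemma~\ref{lem-twomaximal}, Lemma~\ref{lem-prieven} and Proposition~\ref{prop-chain}, the argument is essentially a short case analysis. The only place requiring genuine care is the first step, namely ruling out the \emph{incomparable} configuration of $H_1$ and $H_2$; there I rely on the maximal-subgroup obstruction of Lemma~\ref{lem-twomaximal} to collapse the two subgroups into a chain. Once the chain $\{1\}\lneqq G$ is established, the conclusions $\la_1=|S|$ and $\la_1+\la_2=|S|$ clash immediately, and the contradiction $\la_2=0$ follows without further computation.
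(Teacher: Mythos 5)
Your proof is correct and follows essentially the same route as the paper: Lemma~\ref{lem-twomaximal} forces the chain, Lemma~\ref{lem-prieven}(1)(2) (equivalently Proposition~\ref{prop-chain}(1)) pins down $H_1=\{1\}$ and $H_2=G$, and the contradiction comes from the coefficient of the identity in $SS^{(-1)}$. Your final step ($\la_1=|S|$ together with $\la_1+\la_2=|S|$ forcing $\la_2=0$) is just a repackaging of the paper's observation that $SS^{(-1)}=|S|+G$ has identity coefficient $|S|+1$ on the right but $|S|$ on the left.
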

\begin{proof}
Let $S$ be a primitive formally dual set of rank two in $G$, with respect to $H_1,H_2$. By Lemma~\ref{lem-twomaximal}, $H_1$ and $H_2$ must form a chain, say, $H_1 \lneqq H_2$. By Proposition~\ref{prop-chain}(1)(2), we have $H_1=\{1\}$, $H_2=G$ and
$$
SS^{(-1)}=|S|+G.
$$
This is impossible since the coefficients of $1$ on both sides do not match.
\end{proof}

Now we proceed to consider the characterization of rank three primitive formally dual sets. Let $S$ be a formally dual set of rank $r$ in $G$, with respect to $H_i$, $1 \le i \le r$. Let $I$ be a subset of $\{1,2,\ldots,r\}$, define
\begin{equation}\label{eqn-defSI}
S_I=S \bigcap \Big(\bigcap_{i \in I}H_i\Big) \sm \Big(\bigcup_{j \notin I} H_j\Big).
\end{equation}
To simplify our notation, for instance, we may simply use $S_{12}$ to represent the set $S_{\{1,2\}}$. The following is a preparatory lemma.

\begin{lemma}\label{lem-rankthree}
Let $S$ be an even set of rank three, with respect to three subgroups $H_1, H_2, H_3$, so that each of $H_t$, $1 \le t \le 3$, is maximal. Let $SS^{(-1)}=\sum_{t=1}^3 \la_tH_t$ and $\{i,j,l\}=\{1,2,3\}$. Then we have the following.
\begin{itemize}
\item[(1)] One of $S_i$ and $S_{jl}$ is empty.
\item[(2)] Suppose $S_i \ne \es$. Then for $x,y \in S_j$, we have $xy^{-1} \in H_j \cap H_l$. Moreover, if $S_1$, $S_2$ and $S_3$ are all nonempty, then $|S_1|=|S_2|=|S_3|=1$.
\item[(3)] Suppose $S_i,S_j \ne \es$ and $S_l=\es$. Then
\begin{equation}\label{eqn-SIprop}
\begin{aligned}
&\{S_i+S_i^{(-1)}\} \subset H_i \sm (H_j \cup H_l), \quad \{S_{ij}+S_{ij}^{(-1)}\} \subset (H_i \cap H_j) \sm \{1\}, \\
&\{S_iS_i^{(-1)}\} \subset H_i \cap H_l, \quad \{S_{ij}S_{ij}^{(-1)}\} \subset H_i \cap H_j, \\
&\{S_iS_j^{(-1)}+S_jS_i^{(-1)}\} \subset H_l \sm (H_i \cup H_j), \\
&\{S_iS_{ij}^{(-1)}+S_{ij}S_i^{(-1)}\} \subset H_i \sm H_j=((H_i \cap H_l) \sm \{1\}) \cup (H_i \sm (H_j \cup H_l)). \\
\end{aligned}
\end{equation}
Moreover, for each $z \in H_l \sm (H_i \cup H_j)$, we have $[S_iS_j^{(-1)}]_z \le 1$.
\item[(4)] $\la_t>0$ for $1 \le t \le 3$ and $\{SS^{(-1)}\} \cap (H_i \sm (H_j \cup H_l)) \ne \es$.
\end{itemize}
\end{lemma}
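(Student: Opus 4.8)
The whole proof runs on one bookkeeping principle. Since $\{SS^{(-1)}\}\subseteq H_1\cup H_2\cup H_3$, every difference $ss'^{-1}$ with $s,s'\in S$ must lie in at least one $H_t$; moreover, for a fixed $t$, we have $ss'^{-1}\in H_t$ whenever both $s,s'\in H_t$, and $ss'^{-1}\notin H_t$ whenever exactly one of $s,s'$ lies in $H_t$. After translating so that $1\in S$ (which changes neither $SS^{(-1)}$ nor the $H_t$ and $\la_t$), we have $S=\bigsqcup_{I}S_I$ with $1\in S_{123}$. The key consequence I would record first is: if $s\in S_I$, $s'\in S_J$ and $I\cap J=\es$, then $ss'^{-1}$ is barred from every $H_t$ with $t\in I\cup J$ and hence forced into the remaining subgroup(s). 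This single observation drives parts (1)--(3).

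I would then derive parts (1), the first assertion of (2), and (4) directly. For (1): were $a\in S_i$ and $b\in S_{jl}$ both present, $a$ would lie in $H_i$ alone and $b$ in $H_j\cap H_l$ alone, so $ab^{-1}$ avoids all three $H_t$, contradicting $\nu_S(ab^{-1})\ge 1$. For (2)-first, fix $a\in S_i$; for $x,y\in S_j$ the differences $ax^{-1}$ and $ay^{-1}$ avoid $H_i$ and $H_j$, hence lie in $H_l$, so $xy^{-1}=(ax^{-1})^{-1}(ay^{-1})\in H_l$, while $xy^{-1}\in H_j$ is automatic. Part (4) is independent: as the $H_t$ are pairwise incomparable, no $H_i$ lies in $H_j\cup H_l$, so $R_i:=H_i\sm(H_j\cup H_l)\ne\es$; on $R_i$ one has $\nu_S\equiv\la_i\ge 0$, and $\la_i\ne 0$ by minimality of the rank, whence $\la_i>0$ and any $z\in R_i$ witnesses $\{SS^{(-1)}\}\cap R_i\ne\es$.

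For (3) the same bookkeeping yields the listed containments: $\{S_i+S_i^{(-1)}\}\subseteq H_i\sm(H_j\cup H_l)$ and $\{S_{ij}+S_{ij}^{(-1)}\}\subseteq(H_i\cap H_j)\sm\{1\}$ are immediate from the definitions (these regions are inverse-closed and miss $1$); $\{S_iS_i^{(-1)}\}\subseteq H_i\cap H_l$ is precisely (2)-first with $S_j\ne\es$; and the mixed containments $\{S_iS_j^{(-1)}+S_jS_i^{(-1)}\}\subseteq H_l\sm(H_i\cup H_j)$ and $\{S_iS_{ij}^{(-1)}+S_{ij}S_i^{(-1)}\}\subseteq H_i\sm H_j$ follow by testing membership in each $H_t$. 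For the multiplicity bound I would argue that a coincidence $xy^{-1}=x'y'^{-1}$ with $(x,y),(x',y')\in S_i\times S_j$ gives $x'^{-1}x=y'^{-1}y$, a common ratio lying in $\{S_iS_i^{(-1)}\}\cap\{S_jS_j^{(-1)}\}\subseteq(H_i\cap H_l)\cap(H_j\cap H_l)=H_1\cap H_2\cap H_3$; once this triple intersection is trivial the two pairs coincide, giving $[S_iS_j^{(-1)}]_z\le 1$.

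This isolates the main obstacle, which also underlies the ``moreover'' in (2): I must show that $K:=H_1\cap H_2\cap H_3$ is trivial in the configurations at hand. Running (2)-first in all three roles shows that the differences inside each $S_t$ lie in $K$, so every nonempty $S_t$ occupies a single coset of $K$ and $|S_t|\le|K|$; it therefore suffices to prove $K=\{1\}$, which forces $|S_1|=|S_2|=|S_3|=1$. My plan is a reduction modulo $K$: if $K\ne\{1\}$ then each $g\in K\sm\{1\}$ satisfies $\nu_S(g)=\la_1+\la_2+\la_3=|S|$, so $S$ is a union of $K$-cosets, and passing to $\ba{G}=G/K$ produces a rank-three even set $\ba{S}$ with respect to the images $\ba{H}_t$, which now have trivial triple intersection. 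In the all-nonempty case $\ba{S}=\{1,\ba{r}_1,\ba{r}_2,\ba{r}_3\}$ has four elements occupying the four distinct regions $\{1\},R_1,R_2,R_3$, and the endgame is a short analysis of this size-four configuration: comparing the value of $\nu_{\ba{S}}$ on each region $\ba{H}_t\sm(\ba{H}_{t'}\cup\ba{H}_{t''})$ with the identity coefficient $[\ba{S}\ba{S}^{(-1)}]_1=4$ forces either a count mismatch or the degenerate collapse in which $\ba{S}$ becomes a single coset of a subgroup (hence of rank one), each contradicting rank three. I expect this size-four case analysis, together with checking that reduction modulo $K$ neither raises nor lowers the rank, to be the most delicate step; every other assertion is routine once $K=\{1\}$ is secured.
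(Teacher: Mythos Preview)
Your bookkeeping for parts (1), the first clause of (2), the containments in (3), and all of (4) is correct and matches the paper's argument closely; your proof of $\la_t>0$ via $\nu_S|_{R_i}\equiv\la_i\ge 0$ together with $\la_i\ne 0$ is in fact cleaner than the paper's character computation.

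The genuine gap is your treatment of $K=H_1\cap H_2\cap H_3$. The paper does \emph{not} deduce $K=\{1\}$ from the even-set hypothesis; it simply invokes Lemma~\ref{lem-prieven}(2), which applies because in context $S$ is a primitive formally dual set. Your plan to force $K=\{1\}$ by reducing modulo $K$ and analysing the size-four configuration $\bar S=\{1,\bar r_1,\bar r_2,\bar r_3\}$ cannot succeed, because for a bare even set the conclusion is actually false. Concretely: take $S_0$ a $(3,3,3,1)$-RDS in $\Z_3^2$ relative to $N$, and note that $S_0S_0^{(-1)}=3\cdot 0-N+G_0$ can be rewritten as $L_1+L_2+L_3$ where $L_1,L_2,L_3$ are the three order-$3$ subgroups other than $N$; this is a rank-three decomposition with three maximal, pairwise incomparable subgroups and trivial triple intersection. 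Now set $G=G_0\times\Z_2$, $S=S_0\times\Z_2$, $H_t=L_t\times\Z_2$. Then $SS^{(-1)}=2H_1+2H_2+2H_3$ is still rank three with three maximal subgroups, but $K=\{0\}\times\Z_2\ne\{1\}$, and one checks directly that $[S_iS_j^{(-1)}]_z=2$ for suitable $z\in H_l\setminus(H_i\cup H_j)$, violating the multiplicity bound in (3). The same construction would violate the ``moreover'' in (2) were there a base example with all $S_t$ nonempty.

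So your reduction-mod-$K$ endgame will not produce a contradiction: the quotient $\bar S$ is a perfectly good rank-three even set, and there is nothing inconsistent about it. The fix is not to sharpen that analysis but to import the missing hypothesis. Either assume $\bigcap_t H_t=\{1\}$ outright, or assume (as the paper tacitly does) that $S$ is a primitive formally dual set and cite Lemma~\ref{lem-prieven}(2). Once $K=\{1\}$ is in hand, your arguments for the ``moreover'' in (2) and the multiplicity bound in (3) go through exactly as you wrote them.
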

\begin{proof}
Since $S$ has rank three and $H_1,H_2,H_3$ are three maximal subgroups, then for any $x,y \in S$, we have $xy^{-1} \in H_1 \cup H_2 \cup H_3$. Recall that by Lemma~\ref{lem-prieven}(2), $H_1 \cap H_2 \cap H_3=\{1\}$. Without loss of generality, we assume $1 \in S$.

(1) Suppose otherwise, that $x \in S_i$ and $y \in S_{jl}$. Then $xy^{-1} \notin H_1 \cup H_2 \cup H_3$, which contradicts the property of even sets.

(2) Suppose $z \in S_i$. Then $xz^{-1}, zy^{-1} \in H_l \sm (H_i \cup H_j)$. Thus, $xy^{-1}=(xz^{-1})(zy^{-1}) \in H_j \cap H_l$. Moreover, if $S_1$, $S_2$ and $S_3$ are all nonempty, there exists $w \in S_l$. A similar argument shows that $xy^{-1}=(xw^{-1})(wy^{-1}) \in H_i \cap H_j$. Thus, $xy^{-1} \in H_1\cap H_2 \cap H_3=\{1\}$ and $x=y$. Therefore, $|S_j|=1$. By the choice of $j$, we have $|S_1|=|S_2|=|S_3|=1$.

(3) \eqref{eqn-SIprop} follows from \eqref{eqn-defSI} and (2). Assume there exists $z \in H_l \sm (H_i \cup H_j)$, such that $[S_iS_j^{(-1)}]_z>1$. Then, there exist distinct $x,y \in S_j$, such that $xz,yz \in S_i$ and $(xz)x^{-1},(yz)y^{-1} \in S_iS_j^{(-1)}$. Then by \eqref{eqn-SIprop}, we have $xy^{-1} \in \{S_jS_j^{(-1)}\} \subset H_j \cap H_l$ and $xy^{-1}=(xz)(yz)^{-1} \in \{S_iS_i^{(-1)}\} \subset H_i \cap H_l$. Therefore $xy^{-1}\in H_i \cap H_j \cap H_l=\{1\}$ and $x=y$, which gives a contradiction. Thus, for each $z \in H_l \sm (H_i \cup H_j)$, we have $[S_iS_j^{(-1)}]_z \le 1$.


(4) Recall that $SS^{(-1)}=\sum_{t=1}^3 \la_tH_t$. Since $H_1$, $H_2$ and $H_3$ are three maximal subgroups, there exists a nonprincipal character $\chi$ which is principal on $H_1$ and nonprincipal on $H_2$ and $H_3$. Thus, $|\chi(S)|^2=\la_1|H_1| \ge 0$, which implies $\la_1>0$. Similarly, we can show that $\la_t>0$ for each $1 \le t \le 3$. By the maximality of subgroups, there exists $x \in H_i \sm H_j$ and $y \in H_i \sm H_l$. If $x=y$, then $x \in H_i \sm (H_j \cup H_l)$ and $x=x1^{-1} \in \{SS^{(-1)}\}$, which implies $\{SS^{(-1)}\} \cap (H_i \sm (H_j \cup H_l)) \ne \es$. If $x \ne y$, then $xy^{-1} \in H_i \sm (H_j \cup H_l)$, which also implies $\{SS^{(-1)}\} \cap (H_i \sm (H_j \cup H_l)) \ne \es$.
\end{proof}

For primitive formally dual sets of rank three, we have the following characterization.

\begin{theorem}\label{thm-rankthree}
If $S$ is a primitive formally dual set of rank three in $G$, then $S$ is an $(n,n,n,1)$-RDS in $G$ with $n>1$.
\end{theorem}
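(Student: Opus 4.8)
The plan is to fix a minimal decomposition $SS^{(-1)}=\sum_{i=1}^3\lambda_i H_i$ (with $\lambda_i\neq 0$) and first classify the inclusion pattern of $\{H_1,H_2,H_3\}$. By Lemma~\ref{lem-twomaximal} this poset can have neither exactly two maximal nor exactly two minimal elements, and a short enumeration of the containment patterns of three subgroups shows that the only surviving possibilities are that the $H_i$ form a chain $H_1\lneqq H_2\lneqq H_3$ (one maximal, one minimal) or that they are pairwise incomparable (so all three are simultaneously maximal and minimal). I would treat these two cases separately, with the goal of producing in each the defining relation, or the character spectrum, of an $(n,n,n,1)$-RDS.

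In the chain case the work is essentially done by the earlier results. Proposition~\ref{prop-chain}(1),(2),(4) give $H_1=\{1\}$, $H_3=G$, $\lambda_1=|S|$, $\lambda_3=1$ and $|G|=|S|^2$. Feeding these into the two identities of Lemma~\ref{lem-prieven}(3), namely $\lambda_1+\lambda_2+\lambda_3=|S|$ and $\sum_i\lambda_i|H_i|=|S|^2$, forces $\lambda_2=-1$ and $|H_2|=|S|$. Reading off coefficients then yields $SS^{(-1)}=|S|+(G-H_2)$, which is exactly the defining relation of an $(n,n,n,1)$-RDS relative to $N=H_2$ with $n=|S|=|H_2|>1$ (the parameters being $(m,n,k,\lambda)=(|S|,|S|,|S|,1)$). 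This disposes of the chain case cleanly.

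The antichain case is the crux. Using translation invariance (Remark~\ref{rem-trans}) I would assume $1\in S$, so that $S\subset H_1\cup H_2\cup H_3$ and $S$ partitions as $\{1\}\sqcup\bigsqcup_i S_i\sqcup\bigsqcup_{\{j,l\}} S_{jl}$ with the $S_I$ of \eqref{eqn-defSI}. Since each $H_i$ is maximal, Lemma~\ref{lem-rankthree} applies in full: $\lambda_i>0$, for each $i$ one of $S_i,S_{jl}$ is empty, the nonempty $S_i$ are heavily constrained (singletons when all three are nonempty), and the cross-products obey the containments \eqref{eqn-SIprop} together with the bound $[S_iS_j^{(-1)}]_z\le 1$. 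Working from $|\chi_y(S)|^2=\sum_{i:\,y\in\wti{H_i}}\lambda_i|H_i|$, my aim is to show that the character spectrum collapses to the three values $\{|S|^2,|S|,0\}$ and that the characters yielding value $0$, together with the principal character, form a subgroup $N^{\perp}$; by the character form of Definition~\ref{def-RDS} this identifies $S$ as an $(n,n,n,1)$-RDS relative to $N$. It is important to note that here $N$ need not be any of the $H_i$: for the RDS $\{(x,x^2)\}$ in $\Z_3\times\Z_3$ the three subgroups $H_i$ are three of the four lines and the relative subgroup $N$ is the remaining line, which is precisely an antichain realization. Concretely I would run the case analysis on the number of nonempty $S_i$, using the sizes from Lemma~\ref{lem-rankthree}(2),(3) and the primitivity bounds $0\le[SS^{(-1)}]_z<|S|$ for $z\neq 1$ to pin down the weights $\lambda_i|H_i|$ and the products $H_iH_j$.

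I expect the antichain case to be the main obstacle. The purely linear identities $\sum_i\lambda_i=|S|$ and $\sum_i\lambda_i|H_i|=|S|^2$ are consistent for many parameter choices (indeed they reduce to a tautology after inclusion–exclusion over the regions of $SS^{(-1)}$), so no contradiction or rigidity can come from them alone. The genuine leverage must come from realizability — that $SS^{(-1)}$ is the autocorrelation of an honest subset — which is exactly what Lemma~\ref{lem-rankthree} encodes through \eqref{eqn-SIprop} and the multiplicity bound. I anticipate that this bookkeeping forces the pairwise intersections $H_i\cap H_j$ and the weights $\lambda_i|H_i|$ into the rigid configuration producing the RDS spectrum; the most delicate point will be tracking which characters contribute value $0$ versus value $|S|$ and verifying that the value-$0$ characters close up to a subgroup, since that is what upgrades a three-valued spectrum into a bona fide relative difference set.
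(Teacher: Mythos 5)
Your reduction to the two containment patterns via Lemma~\ref{lem-twomaximal} and your treatment of the chain case coincide with the paper's: Proposition~\ref{prop-chain} gives $H_1=\{1\}$, $H_3=G$, $\la_1=|S|$, $\la_3=1$, $|G|=|S|^2$, and Lemma~\ref{lem-prieven}(3) forces $\la_2=-1$, $|H_2|=|S|$, hence $SS^{(-1)}=|S|+G-H_2$. That half is complete and correct. Your observation that in the antichain case the forbidden subgroup $N$ need not be among the $H_i$ (the $\Z_3\times\Z_3$ example, where the $H_i$ are three of the four lines and $N$ is the fourth) is exactly right and is precisely what happens in the one antichain configuration that survives.

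The gap is that the antichain case is not actually carried out, and the endpoint you aim for there is not reachable by the route you describe. You propose to show that the character spectrum collapses to $\{|S|^2,|S|,0\}$ and that the value-$0$ characters close up to a subgroup; but in almost every antichain configuration the correct conclusion is nonexistence, not an RDS, so there is no spectrum to collapse --- one must instead derive contradictions case by case. The decisive steps you are missing are: (i) after translating so that $S_1\ne\es$ and splitting on which of $S_2,S_3$ are empty, the bound $\la_3\le 2$, obtained by combining $\{SS^{(-1)}\}\cap(H_3\sm(H_1\cup H_2))\ne\es$ with the multiplicity bound $[S_iS_j^{(-1)}]_z\le 1$ of Lemma~\ref{lem-rankthree}(3)(4); (ii) the translation trick (replacing $S$ by $x^{-1}S$ for $x\in S_1$ and by $y^{-1}S$ for $y\in S_2$) to obtain $\la_1,\la_2\le 2$ as well, whence $3\le|S|\le 6$ and $|G|\le 36$; and (iii) the resolution of the resulting finitely many parameter sets, which in the paper is done either by computer search (Table~\ref{tab-table}) or by the several-page analysis of Appendix~\ref{app-B} using the equations \eqref{eqn-la1la2}--\eqref{eqn-la2} on $|H_i\cap H_j|$ and $|\Span\{H_i,H_j\}|$. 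Your ``bookkeeping'' via \eqref{eqn-SIprop} is indeed the right leverage, but without the reduction to finitely many cases and their explicit disposal the argument does not close; as written, the crux of the theorem is asserted rather than proved.
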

\begin{proof}

Suppose $S$ is a primitive formally dual set of rank three in group $G$, with respect to three subgroups $H_i$, $1 \le i \le 3$. Then there exists a primitive formally dual pair formed by $S$ and $T$ in $G$, and both $S$ and $T$ have rank three. Throughout the proof, we assume without loss of generality that $|G| \le |S|^2$. Otherwise, if $|G|>|S|^2$, we can switch the roles of $S$ and $T$. Let
\begin{equation}\label{eqn-rankthree}
SS^{(-1)}=\la_1H_1+\la_2H_2+\la_3H_3,
\end{equation}
where $\la_i$, $1 \le i \le 3$, are nonzero integers. By Lemma~\ref{lem-twomaximal}, there are either one or three maximal subgroups among $H_1,H_2,H_3$ and either one or three minimal subgroups among $H_1,H_2,H_3$. Therefore, either $H_1,H_2,H_3$ form a chain $H_1 \lneqq H_2 \lneqq H_3$, or each of them is both maximal and minimal.

Consider the case $H_1\lneqq H_2\lneqq H_3$. We use $n$ to denote the size of $S$. Then by Lemma~\ref{lem-prieven} and Proposition~\ref{prop-chain}, we can easily derive that
$$
|G|=n^2, \quad H_1=\{1\}, \quad |H_2|=n, \quad H_3=G, \quad \la_1=n, \quad \la_2=-1, \quad \la_3=1.
$$
Consequently, we have $n>1$ and
$$
SS^{(-1)}=n-H_2+G,
$$
where $H_2$ is a subgroup of $G$ of order $n$, and therefore, $S$ is an $(n,n,n,1)$-RDS in $G$ relative to $H_2$, with $n>1$.

Below, we assume that each subgroup $H_i$, $1 \le i \le 3$ is both maximal and minimal. Without loss of generality, we assume that $1 \in S$, thus $S \subset H_1 \cup H_2 \cup H_3$ and $S=1+S_1+S_2+S_3+S_{12}+S_{13}+S_{23}$. We claim that $S_1 \ne \es$. In fact, by Lemma~\ref{lem-rankthree}(4), there exists $x,y \in S$, such that $xy^{-1} \in H_1 \sm (H_2 \cup H_3)$. If $y=1$, then $x \in S_1 \ne \es$. Otherwise, consider the translation $y^{-1}S$, we have $1 \in y^{-1}S$ and $xy^{-1} \in y^{-1}S \cap (H_1 \sm (H_2 \cup H_3))$. Replacing $S$ with $y^{-1}S$, we have $1 \in y^{-1}S$ and $(y^{-1}S)_1 \ne \es$. Thus, by taking a proper translation if necessary, we can assume $S_1 \ne \es$.

Next, we split our discussion into three cases.

Case I: $S_2 \ne \es$ and $S_3 \ne \es$. By Lemma~\ref{lem-rankthree}(1), we have $S_{12}=S_{13}=S_{23}=\es$. So, $S=1+S_1+S_2+S_3$. By Lemma~\ref{lem-rankthree}(2), we have $|S_{1}|=|S_{2}|=|S_{3}|=1$ and therefore, $|S|=4$. By Lemma~\ref{lem-rankthree}(4), $\la_1,\la_2,\la_3>0$. Note that $\la_1+\la_2+\la_3=4$, we may assume without loss of generality that $\la_1=\la_2=1$ and $\la_3=2$. Let $S=1+a_1+a_2+a_3$, where $a_i \in S_i$, $1 \le i \le 3$. Computing $SS^{(-1)}$, we have
\begin{align*}
1+a_1+a_1^{-1}+a_2a_3^{-1}+a_3a_2^{-1}&=H_1 \\
1+a_2+a_2^{-1}+a_1a_3^{-1}+a_3a_1^{-1}&=H_2 \\
2+a_3+a_3^{-1}+a_1a_2^{-1}+a_2a_1^{-1}&=2H_3
\end{align*}
Thus, $H_3 \cong \Z_3$ and $H_3=1+a_3+a_3^{-1}=1+a_1a_2^{-1}+a_2a_1^{-1}$. If $a_3=a_1a_2^{-1}$, we have $1+2a_1+2a_1^{-1}=H_1$, which is impossible. If $a_3=a_2a_1^{-1}$, we have $1+2a_2+2a_2^{-1}=H_2$, which is impossible.

Case II: $S_2=\es$ and $S_3=\es$. By Lemma~\ref{lem-rankthree}(1), $S=1+S_1+S_{12}+S_{13} \subset H_1$. This leads to a contradiction with the primitivity of $S$.

Case III: Exactly one of $S_2$ and $S_3$ is empty. Without loss of generality, assume $S_3=\es$. By Lemma~\ref{lem-rankthree}(1), we have $S_{13}=S_{23}=\es$. So, $S=1+S_1+S_2+S_{12}$. Therefore,
\begin{align*}
SS^{(-1)}=&1+S_1+S_1^{(-1)}+S_2+S_2^{(-1)}+S_{12}+S_{12}^{(-1)}+S_1S_1^{(-1)}+S_2S_2^{(-1)}+S_{12}S_{12}^{(-1)}\\
          &+S_1S_2^{(-1)}+S_2S_1^{(-1)}+S_1S_{12}^{(-1)}+S_{12}S_1^{(-1)}+S_2S_{12}^{(-1)}+S_{12}S_2^{(-1)}.
\end{align*}
By \eqref{eqn-defSI} and Lemma~\ref{lem-rankthree}, we have
\begin{equation}\label{eqn-SIprop2}
\begin{aligned}
& \{S_1+S_1^{(-1)}\} \subset H_1 \sm (H_2 \cup H_3), \quad \{S_2+S_2^{(-1)}\} \subset H_2 \sm (H_1 \cup H_3),\\
&\{S_{12}+S_{12}^{(-1)}\} \subset (H_1 \cap H_2) \sm \{1\}, \quad \{S_1S_1^{(-1)}\} \subset H_1 \cap H_3, \quad \{S_2S_2^{(-1)}\} \subset H_2 \cap H_3, \\
&\{S_{12}S_{12}^{(-1)}\} \subset H_1 \cap H_2, \quad \{S_1S_2^{(-1)}+S_2S_1^{(-1)}\} \subset H_3 \sm (H_1 \cup H_2), \\
&\{S_1S_{12}^{(-1)}+S_{12}S_1^{(-1)}\} \subset H_1 \sm H_2=((H_1 \cap H_3) \sm \{1\}) \cup (H_1 \sm (H_2 \cup H_3)), \\
&\{S_2S_{12}^{(-1)}+S_{12}S_2^{(-1)}\} \subset H_2 \sm H_1=((H_2 \cap H_3) \sm \{1\}) \cup (H_2 \sm (H_1 \cup H_3)).
\end{aligned}
\end{equation}
By Lemma~\ref{lem-rankthree}(4), we have $\{SS^{(-1)}\} \cap (H_3 \sm (H_1 \cup H_2)) \ne \es$. Suppose $z \in (H_3 \sm (H_1 \cup H_2))$. Then by \eqref{eqn-SIprop2}, we have $[SS^{(-1)}]_z=[S_1S_2^{(-1)}+S_2S_1^{(-1)}]_z$. By Lemma~\ref{lem-rankthree}(3), $[SS^{(-1)}]_z=[S_1S_2^{(-1)}+S_2S_1^{(-1)}]_z=[S_1S_2^{(-1)}]_z+[S_2S_1^{(-1)}]_z \le 2$. Together with \eqref{eqn-rankthree}, we have $[SS^{(-1)}]_z=\la_3 \le 2$.

Suppose $x \in S_1$ and consider $S^{\pr}=x^{-1}S$. It is easy to see that $1 \in S^{\pr}$, $x^{-1} \in S_1^{\pr} \ne \es$, $S_3^{\pr} \ne \es$ and $S_2^{\pr}=\es$. Indeed, for $y \in S_2$, we have $x^{-1}y \in S^{\pr}$ and $x^{-1}y \in H_3 \sm (H_1 \cup H_2)$, which implies $x^{-1}y \in S_3^{\pr} \ne \es$. If $S_2^{\pr} \ne \es$, then there exists $z \in S_2^{\pr}$, which implies $xz \in S_3$, contradicting $S_3=\es$. Since $S^{\pr}{S^{\pr}}^{(-1)}=SS^{(-1)}=\la_1H_1+\la_2H_2+\la_3H_3$, considering $S^{\pr}$ and using an analogous approach as above, we can show that $\la_2 \le 2$. Similarly, suppose $y \in S_2$ and consider $S^{\pr\pr}=y^{-1}S$. Then we can show that $\la_1 \le 2$. Thus, we have $1 \le \la_i \le 2$, $1 \le i \le 3$ and therefore, $3 \le |S| \le 6$.

Thus, the problem has been reduced to finitely many cases, for which we need to characterize rank three primitive formally dual sets $S$ in $G$, with $3 \le |S| \le 6$ and $|G| \le |S|^2 \le 36$.

Let $S$ and $T$ be a primitive formally dual pair in $G$. By Propositions~\ref{prop-non}(1) and Theorem~\ref{thm-convertion}, $T$ is a rank three primitive formally dual set in $G$, satisfying $2 \le |T| \le 6$ and $|T|^2 \le |G| \le 36$. Now we proceed to do a computer search, which leads to Table~\ref{tab-table} in Appendix~\ref{app-A}. Up to equivalence, Table~\ref{tab-table} covers every rank three primitive formally dual set $T$ in $G$ satisfying the above condition, which is marked by $\bigstar$. We can see that each of them is an $(n,n,n,1)$-RDS in $G$ with $n>1$. Moreover, by Definition~\ref{def-equiv}, every set equivalent to $T$ must be an $(n,n,n,1)$-RDS in $G$ with $n>1$. Thus, by Theorem~\ref{thm-RDS}, $S$ is an $(n,n,n,1)$-RDS in $G$ with $n>1$ and we complete the proof.

We note that a theoretical proof tackling the finitely many cases in detail, is presented in Appendix~\ref{app-B}.
\end{proof}

\begin{remark}
By Example~\ref{exam-RDS}, each RDS $S$ in $G$ with $|S|>1$ is an even set of rank three. Being a primitive formally dual set forces $S$ to be an $(n,n,n,1)$-RDS in $G$ with $n>1$.
\end{remark}

Now, we illustrate how the characterization of rank three primitive formally dual sets can provide a new viewpoint towards Conjecture~\ref{conj-main}. We begin with the following proposition, which is a direct consequence of \cite[Theorem 4.1.1]{Pott95}.

\begin{proposition}\label{prop-cycRDS}
Let $R$ be an $(n,n,n,1)$-RDS in cyclic group $\Z_{n^2}$. Then one of the following holds.
\begin{itemize}
\item[(1)] $n=1$ and $R=\{1\}$ is the trivial RDS in the trivial group $G=\{1\}$.
\item[(2)] $n=2$ and up to equivalence, $R=\{1,g\}$ is a $(2,2,2,1)$-RDS in $\Z_4=\{1,g,g^2,g^3\}$.
\end{itemize}
\end{proposition}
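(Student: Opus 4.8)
The plan is to turn the large exponent of a cyclic group against the exponent/structure restrictions that an $(n,n,n,1)$-RDS imposes, and then to read off the two surviving cases by hand. First I would record that $\Z_{n^2}$ has a unique subgroup of each order, so any $(n,n,n,1)$-RDS $R$ is necessarily relative to $N=n\Z_{n^2}$, the subgroup of order $n$. The character description in Definition~\ref{def-RDS} then forces $|\chi(R)|^2=k-\la n=n-n=0$, hence $\chi(R)=0$, for every nonprincipal $\chi$ that is principal on $N$; since these are exactly the nonprincipal characters of $G/N\cong\Z_n$, this says $R$ meets each coset of $N$ precisely once, i.e.\ $R$ is a transversal of $N$. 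This is harmless book-keeping, but it will be convenient for the explicit identification at the end. The genuine input, by contrast, is a bound on how large the group can be.

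The substantive step is to invoke the exponent bound for abelian $(n,n,n,1)$-RDSs, \cite[Theorem 4.1.1]{Pott95}, together with the structure result for even $n$ recorded above (\cite[Result 5.4.1]{Pott95}). Applying these to $G=\Z_{n^2}$, whose exponent equals $n^2$, I would argue by the parity of $n$. If $n$ is odd, the exponent bound yields $\exp(G)\le n$, so $n^2=\exp(\Z_{n^2})\le n$ forces $n\le 1$, i.e.\ $n=1$. If $n$ is even, then $n=2^m$ and $G\cong\Z_4^m$; but $\Z_4^m$ is cyclic if and only if $m=1$, so $n=2$. Either way the only possibilities are $n=1$ and $n=2$, which is precisely the dichotomy in the statement.

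It then remains to identify $R$ in each surviving case. For $n=1$ the group is trivial and $R=\{1\}$, giving (1). For $n=2$ we have $G=\Z_4$ and $N=\{1,g^2\}$; by the transversal property $R$ contains one element of $N$ and one of the coset $\{g,g^3\}$, and a one-line computation of $RR^{(-1)}$ confirms every such transversal is a $(2,2,2,1)$-RDS. Since any two transversals differ by a translation and an automorphism of $\Z_4$ (for instance $x\mapsto x^{-1}$), all of them are equivalent in the sense of Definition~\ref{def-equiv}, so up to equivalence $R=\{1,g\}$, giving (2). The main obstacle is entirely concentrated in the odd case: ruling out $\Z_{n^2}$ there is the nontrivial content of the exponent bound, which ultimately rests on analysing the character sums $\chi(R)\in\Z[\zeta_{n^2}]$ satisfying $|\chi(R)|^2=n$ in the totally ramified cyclotomic field $\Q(\zeta_{n^2})$. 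A fully self-contained proof would have to reproduce that number-theoretic analysis, whereas here I would simply cite \cite[Theorem 4.1.1]{Pott95} and deduce the proposition as stated.
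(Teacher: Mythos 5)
Your proposal is correct and takes essentially the same route as the paper, whose entire ``proof'' is the remark that the proposition is a direct consequence of \cite[Theorem 4.1.1]{Pott95}: you rest the substantive step on the same citation (together with \cite[Result 5.4.1]{Pott95}, already quoted in Proposition~\ref{prop-con}'s vicinity) and then correctly fill in the bookkeeping the paper omits --- that $N$ must be the unique subgroup of order $n$, that $R$ is a transversal of $N$, that $\Z_4^m$ is cyclic only for $m=1$, and that all four transversals in $\Z_4$ are equivalent under translation and inversion. The only point to verify against the source is that your paraphrase of \cite[Theorem 4.1.1]{Pott95} as the bound $\exp(G)\le n$ for odd $n$ matches its actual statement; since every known exponent bound for abelian $(p^a,p^a,p^a,1)$-RDSs with $p$ odd excludes $\exp(G)=n^2$ once $n>1$, the conclusion is unaffected either way.
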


We recall that when $N$ is a prime power, Conjecture~\ref{conj-main} has been proved in \cite{Sch} and \cite{Xia}. The author of \cite{Xia} essentially showed that a primitive formally dual set in the cyclic group $\Z_{2^{2k}}$ with $k \ge 1$ must have rank three. By Theorem~\ref{thm-rankthree}, such a primitive formally dual set must be a $(2^k,2^k,2^k,1)$-RDS in $\Z_{2^{2k}}$, for which there is only one example described in Proposition~\ref{prop-cycRDS}(2). In light of this observation, it may not be a coincidence that Proposition~\ref{prop-cycRDS} exactly captures the only two primitive formally dual sets in cyclic groups: Proposition~\ref{prop-cycRDS}(1) corresponds to the trivial configuration and Proposition~\ref{prop-cycRDS}(2) corresponds to the TITO configuration. Finally, we propose the following question.

\begin{question}\label{ques-rankthree}
Is it true that a primitive formally dual set in a cyclic group $\Z_N$ with $N > 1$ must have rank three?
\end{question}

An affirmative answer to Question~\ref{ques-rankthree}, together with Theorem~\ref{thm-rankthree} and Proposition~\ref{prop-cycRDS}, will show that Conjecture~\ref{conj-main} is true.

\section{Nonexistence of primitive formally dual pairs}\label{sec5}

In this section, we derive some nonexistence results about primitive formally dual pairs, which provide more supportive evidence towards Conjecture~\ref{conj-main}.

Let $G$ be a group of order $N$ and $p$ a prime divisor of $N$. We use $G_p$ to denote the Sylow $p$-subgroup of $G$. For a primitive formally dual pair $S$ and $T$, define
$$
a_S=\frac{|S|^2}{(|S|^2,|T|)}, \quad a_T=\frac{|T|^2}{(|T|^2,|S|)}, \quad b_S=\frac{|S|}{(|S|,|T|^2)}, \quad b_T=\frac{|T|}{(|T|,|S|^2)}.
$$

The following theorem indicates that the divisibility of weight enumerators implies some restrictions on the size of $S$ and $T$.

\begin{theorem}\label{thm-order}
Let $S$ and $T$ be a primitive formally dual pair in a group $G$. Then for each $y \in G$, we have $b_S \big| \nu_S(y)$ and $b_T \big| \nu_T(y)$. Let $p$ be a prime divisor of $|G|$ such that $G_p$ is cyclic. Suppose there exists a positive integer $r$, such that $p^r \big\| a_S$. Then we have
$$
\left\lf \frac{|S|}{p^r} \right\rf \ge \frac{b_S}{(b_S,p^r)}.
$$
Suppose there exists a positive integer $r^\pr$, such that $p^{r^{\pr}} \big\| a_T$. Then we have
$$
\left\lf \frac{|T|}{p^{r^\pr}} \right\rf \ge \frac{b_T}{(b_T,p^{r^{\pr}})}.
$$
\end{theorem}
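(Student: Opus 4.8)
The plan is to first extract two divisibility facts from the defining identities \eqref{eqn-def} and \eqref{eqn-def2}, and then to feed these, together with the hypothesis that $G_p$ is cyclic, into a structural argument on the group ring $\Z[G]$.

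For the divisibility, I would start from \eqref{eqn-def2}, which reads $|S|\,|\chi_y(T)|^2=|T|^2\,\nu_S(y)$. Writing $d=(|S|,|T|^2)$, so that $|S|=d\,b_S$ and $|T|^2=d\,e$ with $(b_S,e)=1$, this becomes $b_S\,|\chi_y(T)|^2=e\,\nu_S(y)$; since $|\chi_y(T)|^2$ is an integer by Remark~\ref{rem-def}(3) and $(b_S,e)=1$, I conclude $b_S\mid\nu_S(y)$ for every $y$. The identical manipulation applied to \eqref{eqn-def} gives $b_T\mid\nu_T(y)$, and, writing $|T|=(|S|^2,|T|)\,g$ with $(a_S,g)=1$, it also yields $a_S\mid|\chi_y(S)|^2$ for every $y$. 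These are the engine of the whole argument; note in particular that $a_S\mid|\chi_y(S)|^2$ and $b_S\mid|\chi_y(S)|^2$ together give $\lcm(a_S,b_S)\mid|\chi_y(S)|^2$.

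Now I turn to the size bound for $S$; the bound for $T$ will follow by interchanging the roles of $S$ and $T$, which is legitimate since formal duality is symmetric. Because $b_S/(b_S,p^r)$ is a positive integer and $\lf x\rf\ge m\iff x\ge m$ for an integer $m$, the asserted inequality is equivalent to $|S|\ge \frac{b_S}{(b_S,p^r)}\,p^r$. A short valuation check (using $p^r\big\| a_S$) shows $r\ge v_p(b_S)$, where $v_p$ denotes the $p$-adic valuation of an integer, so that $(b_S,p^r)=p^{v_p(b_S)}$; after cancelling the $p$-free part of $b_S$ the claim reduces to the clean statement
\[
(|S|,|T|^2)\ \ge\ p^{\,r-v_p(b_S)}.
\]
To prove this I would set $Y=\frac{1}{b_S}SS^{(-1)}\in\Z[G]$, which has nonnegative integer coefficients by the first paragraph and satisfies $[Y]_1=|S|/b_S=(|S|,|T|^2)$. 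Its character values are $\chi(Y)=|\chi(S)|^2/b_S$, rational integers divisible by $p^{\,r-v_p(b_S)}$ by the $\lcm$ remark above. This is exactly where the hypothesis that $G_p$ is cyclic becomes indispensable: such a group has a unique subgroup $P$ of order $p$, and a Ma-type structure theorem for group rings over groups with cyclic Sylow $p$-subgroup then lets me write $Y=p^{\,r-v_p(b_S)}X_1+PX_2$ for suitable $X_1,X_2\in\Z[G]$. Projecting to $\Z[G/P]$ (under which $P\mapsto p\cdot 1$), and combining the resulting coset-sum congruences with the nonnegativity of $Y$ and the identity coefficient $[Y]_1=(|S|,|T|^2)$, should force the displayed bound.

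The routine parts are the two divisibility statements and the valuation bookkeeping that reduces the floor inequality to the displayed gcd bound. The hard part will be the final extraction: converting the uniform $p$-adic divisibility of \emph{all} character values of $Y$ into a lower bound on the \emph{single} coefficient $[Y]_1$. This is where the cyclicity of $G_p$ does the real work and where the floor ultimately originates, and it is also the step where one must be careful to use that $Y$ is a genuine nonnegative difference table $SS^{(-1)}/b_S$ rather than an arbitrary group-ring element with divisible character values, since the divisibility hypotheses alone do not pin down $[Y]_1$.
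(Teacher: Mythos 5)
Your setup is sound and, after the cosmetic normalization by $b_S$, it is essentially the paper's own argument: the divisibility facts $b_S \mid \nu_S(y)$, $b_T \mid \nu_T(y)$ and $a_S \mid |\chi_y(S)|^2$ are derived exactly as in the paper, the reduction of the floor inequality to $(|S|,|T|^2) \ge p^{\,r-v_p(b_S)}$ is a correct reformulation, and Ma's lemma applied to $Y=\frac{1}{b_S}SS^{(-1)}$ is legitimate, since $Y$ has nonnegative integer coefficients and every $\chi(Y)=|\chi(S)|^2/b_S$ is a rational algebraic integer, hence an integer, divisible by $p^{k}$ with $k=r-v_p(b_S)$. (The unproved assertion $b_S\mid|\chi_y(S)|^2$ is true, but it should be justified this way, via the integrality of $\chi(Y)$, rather than stated as given.) The paper instead applies Ma's lemma to $SS^{(-1)}$ itself with modulus $p^r$ and keeps the $b_S$-divisibility as a separate congruence $SS^{(-1)}=|S|+b_SZ$; the two bookkeepings are equivalent.

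The genuine gap is the final extraction, which you leave at ``should force the displayed bound'' and whose sketch lists the wrong ingredients. Nonnegativity of $Y$, knowledge of $[Y]_1$, and the decomposition $Y=p^{k}X_1+PX_2$ do \emph{not} force $[Y]_1\ge p^{k}$: the element $Y=P$ has nonnegative coefficients, all character values divisible by $p$, and $[Y]_1=1<p$ (and this is exactly what $\frac{1}{b_S}SS^{(-1)}$ looks like when $S$ is a coset of $P$). Being ``a genuine nonnegative difference table'' is therefore still not enough; what closes the argument is \emph{primitivity}. By Lemma~\ref{lem-pri}(1), $\nu_S(g)<|S|$ for every nonidentity $g$, hence $[Y]_g<[Y]_1$ for each $g\in P\setminus\{1\}$. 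Since $[PX_2]_g=[PX_2]_1$ for all $g\in P$, the difference $[Y]_1-[Y]_g=p^{k}\bigl([X_1]_1-[X_1]_g\bigr)$ is a positive integer multiple of $p^{k}$, whence $[Y]_1\ge p^{k}+[Y]_g\ge p^{k}$, which is your reduced claim. This direct comparison of the identity coefficient with the coefficients at the nonidentity elements of $P$ --- not a projection to $\Z[G/P]$ --- is the step your proposal is missing, and it is precisely where the paper invokes primitivity to get $i'<i$.
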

\begin{proof}
By Remark~\ref{rem-def}(3), in order to ensure that $|\chi_y(T)|^2$ being an integer, for each $y \in G$, we must have $b_S \big| \nu_S(y)$. Similarly, for each $y \in G$, we have $b_T \big| \nu_T(y)$. Below, we only prove the first inequality since the second one is analogous. By \eqref{eqn-def}, $p^r \big\| a_S$ implies $p^r \big| |\chi_y(S)|^2$ for each $y \in G$. Since $G_p$ is cyclic, by Ma's Lemma \cite{Ma} (see also \cite[Chapter VI, Corollary 13.5]{BJL}), we have
\begin{equation}\label{eqn-respf1}
SS^{(-1)}=p^rX+PY,
\end{equation}
where $X,Y \in \Z[G]$ have nonnegative coefficients and $P$ is the unique cyclic subgroup of order $p$ in $G_p$. By \eqref{eqn-def2}, since $b_S \big| \nu_S(y)$ for each $y \in G$, we have
\begin{equation}\label{eqn-respf2}
SS^{(-1)}=|S|+b_SZ,
\end{equation}
where $Z \in \Z[G]$ with $[Z]_1=0$. Set $i=[X]_1$ and $j=[PY]_1$. Since $X$ and $Y$ have nonnegative coefficients, then $i$ and $j$ are two nonnegative integers satisfying $p^ri+j=[SS^{(-1)}]_1=|S|$ and therefore $0 \le i \le \lf \frac{|S|}{p^r} \rf$. For each $g \in P \sm \{1\}$, we have $[X]_g=i^{\pr}$ for some nonnegative integer $i^{\pr}$ and $[PY]_g=[PY]_1=j$. Therefore, $\nu_S(g)=[SS^{(-1)}]_g=p^ri^{\pr}+j=p^r(i^\pr-i)+|S|$. We claim that $0 \le i^\pr <i \le \lf \frac{|S|}{p^r} \rf$. Indeed, since $S$ and $T$ form a primitive formally dual pair and $g$ is nonidentity, by Lemma~\ref{lem-priset}, we have $\nu_S(g)=[SS^{(-1)}]_g < |S|$, which implies $i^\pr < i$.

By \eqref{eqn-respf2}, we have $b_S \big| \nu_S(g)$. Thus, $b_S \big| (p^r(i^\pr-i)+|S|)$ and therefore, $b_S \big| p^r(i-i^{\pr})$, where $0 \le i^\pr <i \le \lf \frac{|S|}{p^r} \rf$. Thus, $\frac{b_S}{(b_S,p^r)} \big| (i-i^{\pr})$ holds for some $0 \le i^\pr <i \le \lf \frac{|S|}{p^r} \rf$, which is equivalent to $\lf \frac{|S|}{p^r} \rf \ge \frac{b_S}{(b_S,p^r)}$.
\end{proof}

As a direct consequence of Proposition~\ref{prop-decom}, we have the following corollary.

\begin{corollary}\label{cor-proj}
Let $S$ and $T$ be a formally dual pair in $G$. Let $y \in G$ be an element of prime order $p$. Define $H_y=G/\ker \chi_y$. Let $\rho_y: G \rightarrow H_y$ be the natural projection. Then we have
$$
\rho_y(S)\rho_y(S)^{(-1)}=\frac{|S|^2}{|T|}\nu_T(y)+\frac{|S|^2}{p}(1-\frac{\nu_T(y)}{|T|})H_y
$$
and
$$
\rho_y(T)\rho_y(T)^{(-1)}=\frac{|T|^2}{|S|}\nu_S(y)+\frac{|T|^2}{p}(1-\frac{\nu_S(y)}{|S|})H_y.
$$
\end{corollary}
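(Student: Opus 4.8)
The plan is to work inside the group ring $\Z[H_y]$ and to show that both sides of the claimed identity have the same character values, after which the Fourier inversion formula (Proposition~\ref{prop-fourier}) forces them to coincide. The crucial structural input is that $H_y=G/\ker\chi_y$ is cyclic of order $p$, so that the group-ring element $\rho_y(S)\rho_y(S)^{(-1)}$ has a two-valued character spectrum over $\wh{H_y}$; this two-valued behaviour is exactly what produces the stated closed form. Note that $\rho_y(S)=\sum_{s\in S}\rho_y(s)$ should be read as an element of $\Z[H_y]$, possibly with coefficients exceeding $1$, which causes no difficulty.

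First I would identify the characters of $H_y$ with characters of $G$ via lifting. By Lemma~\ref{lem-char}(2), $\wh{H_y}$ is cyclic of order $p$, generated by $\wti{\chi_y}$, so its elements are $\wti{\chi_y}^j$ for $0\le j\le p-1$. For $\psi=\wti{\chi_y}^j$ and any $g\in G$ one has $\psi(\rho_y(g))=\chi_y(g)^j=\chi_{y^j}(g)$, using $\chi_{y^j}=\chi_y^j$. Summing over $s\in S$ gives $\psi(\rho_y(S))=\chi_{y^j}(S)$, hence $\psi\big(\rho_y(S)\rho_y(S)^{(-1)}\big)=|\chi_{y^j}(S)|^2$.

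Next I would evaluate this spectrum. The principal character ($j=0$) yields $|\chi_1(S)|^2=|S|^2$. For each of the $p-1$ nonprincipal characters, indexed by $j\in\{1,\dots,p-1\}$, formal duality \eqref{eqn-def} gives $|\chi_{y^j}(S)|^2=\frac{|S|^2}{|T|}\nu_T(y^j)$; since $p$ is prime, every such $j$ lies in $\Z_p^*$, so Proposition~\ref{prop-decom}(2) gives $\nu_T(y^j)=\nu_T(y)$. Thus all nonprincipal character values collapse to the single number $\frac{|S|^2}{|T|}\nu_T(y)$, while the principal value is $|S|^2$. To apply Proposition~\ref{prop-decom} I first note its integrality hypothesis holds here by Remark~\ref{rem-def}(3).

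Finally I would check the right-hand side. Writing the claimed element as $c\cdot 1+d\,H_y$ with $c=\frac{|S|^2}{|T|}\nu_T(y)$ and $d=\frac{|S|^2}{p}\big(1-\frac{\nu_T(y)}{|T|}\big)$, a nonprincipal $\psi$ annihilates $H_y$ and returns $c$, while the principal character returns $c+pd=|S|^2$; these match the spectrum computed above. Since the two elements of $\Z[H_y]$ agree on all of $\wh{H_y}$, Proposition~\ref{prop-fourier} yields equality, which is the first identity; the second follows verbatim by interchanging the roles of $S$ and $T$, legitimate by Remark~\ref{rem-def}(2). The only step requiring genuine care is the bookkeeping of the lift together with the appeal to Proposition~\ref{prop-decom}(2) to flatten all nonprincipal values to one number; the remainder is routine verification.
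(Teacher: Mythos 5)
Your proposal is correct and follows essentially the same route as the paper: compute the character spectrum of $\rho_y(S)\rho_y(S)^{(-1)}$ over $\wh{H_y}$, observe via Lemma~\ref{lem-char}(2) and the Galois-invariance of Proposition~\ref{prop-decom} that all $p-1$ nonprincipal values collapse to $\frac{|S|^2}{|T|}\nu_T(y)$, and conclude by Fourier inversion. The only cosmetic difference is that you flatten the spectrum by applying Proposition~\ref{prop-decom}(2) to $\nu_T(y^j)$, whereas the paper applies Proposition~\ref{prop-decom}(1) to $|\chi_y(S^{(i)})|^2$ directly; these are interchangeable since the former is derived from the latter.
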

\begin{proof}
By Lemma~\ref{lem-char}(2), the character $\chi_y$ induces a character $\wti{\chi_y}$ of $H_y$ of order $p$, such that for each $g \in G$, we have $\wti{\chi_y}(\rho_y(g))=\chi_y(g)$. By Proposition~\ref{prop-decom}(1), for each $i \in \Zp^*$,
$$
|\wti{\chi_{y}}^i(\rho_y(S))|^2=|\wti{\chi_{y}}(\rho_y(S)^{(i)})|^2=|\wti{\chi_{y}}(\rho_y(S^{(i)}))|^2=|\chi_{y}(S^{(i)})|^2=|\chi_y(S)|^2=\frac{|S|^2}{|T|}\nu_T(y).
$$
By Lemma~\ref{lem-char}(2), the character $\wti{\chi_y}$ generates the character group $\wh{H_y} \cong \Zp$ and $\{\wti{\chi_y}^i \mid i \in \Zp^*\}$ is exactly the set of all nonprincipal characters of $H_y$.
Using the Fourier inversion formula, we have
$$
\rho_y(S)\rho_y(S)^{(-1)}=\frac{|S|^2}{|T|}\nu_T(y)+\frac{|S|^2}{p}(1-\frac{\nu_T(y)}{|T|})H_y.
$$
By interchanging the roles of $S$ and $T$, we complete the proof.
\end{proof}

The next theorem follows from Corollary~\ref{cor-proj}, which provides some restrictions on the weight enumerators of $S$ and $T$.

\begin{theorem}\label{thm-weight}
Let $S$ and $T$ be a primitive formally dual pair in a group $G$. Let $p$ be a prime divisor of $|G|$. Let $y \in G$ be an element of order $p$. Then
$$
\frac{|S|^2}{p}(1-\frac{\nu_T(y)}{|T|}), \quad \frac{|T|^2}{p}(1-\frac{\nu_S(y)}{|S|})
$$
are positive integers. In particular, if $p \nmid |S|$, we have $(|S|^2,|T|)>p$ and if $p \nmid |T|$, we have $(|T|^2,|S|)>p$.
\end{theorem}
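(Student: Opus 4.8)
The plan is to read both integers directly off the group-ring identities of Corollary~\ref{cor-proj}, and then to extract the divisibility statement from the positivity that primitivity forces.

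First I would invoke Corollary~\ref{cor-proj}. Since $y$ has prime order $p$, the quotient $H_y=G/\ker\chi_y$ is cyclic of order $p$, and
$$
\rho_y(S)\rho_y(S)^{(-1)}=\frac{|S|^2}{|T|}\nu_T(y)+\frac{|S|^2}{p}\Big(1-\frac{\nu_T(y)}{|T|}\Big)H_y .
$$
The left-hand side lies in $\Z[H_y]$ and has nonnegative integer coefficients, being the autocorrelation of the nonnegative element $\rho_y(S)$. Reading off the coefficient of any nonidentity element of $H_y$ shows that $\frac{|S|^2}{p}\big(1-\frac{\nu_T(y)}{|T|}\big)$ is a nonnegative integer; the symmetric identity for $T$ handles $\frac{|T|^2}{p}\big(1-\frac{\nu_S(y)}{|S|}\big)$. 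To upgrade ``nonnegative'' to ``positive'', I would observe that $y\ne 1$ (it has order $p\ge 2$) and apply Lemma~\ref{lem-pri}(1): primitivity of the pair forbids $\nu_T(y)=|T|$ and $\nu_S(y)=|S|$, while always $\nu_T(y)\le|T|$ and $\nu_S(y)\le|S|$, so both factors $1-\frac{\nu_T(y)}{|T|}$ and $1-\frac{\nu_S(y)}{|S|}$ are strictly positive. Hence both expressions are positive integers.

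For the ``in particular'' claim I would set $d=(|S|^2,|T|)$ and write $|S|^2=du$, $|T|=dw$, so that $b_T=|T|/(|T|,|S|^2)=w$. By Theorem~\ref{thm-order} we have $w\mid\nu_T(y)$, say $\nu_T(y)=wk$ with $k\ge 0$, and primitivity gives $\nu_T(y)<|T|$, i.e. $k<d$. A short substitution rewrites the positive integer from the first part as
$$
\frac{|S|^2}{p}\Big(1-\frac{\nu_T(y)}{|T|}\Big)=\frac{u(d-k)}{p}.
$$
Now $p\nmid|S|$ forces $p\nmid|S|^2=du$, hence $p\nmid u$ and $p\nmid d$. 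Integrality of $u(d-k)/p$ together with $p\nmid u$ forces $p\mid(d-k)$; since $d-k>0$, this positive multiple of $p$ satisfies $d-k\ge p$, so $d\ge p$. The decisive point is that $d$ is coprime to $p$, so $d\ne p$, and therefore $d>p$, which is exactly $(|S|^2,|T|)>p$. Such a $y$ exists because $p\mid|G|$ by Proposition~\ref{prop-non}(1) and Cauchy's theorem. The statement $(|T|^2,|S|)>p$ when $p\nmid|T|$ follows by interchanging the roles of $S$ and $T$.

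The main obstacle is the final ``in particular'' step. The naive bound coming from the integer being $\ge 1$ only yields $|S|^2\ge p$, which is far too weak; the real work is recognizing that the invariant to track is $d=(|S|^2,|T|)$, and that it is simultaneously $\ge p$ (via the divisibility $p\mid(d-k)$, which in turn rests on $p\nmid u$) and coprime to $p$ (since $p\nmid|S|$). It is the interaction of these two facts, rather than either one alone, that forces the strict inequality $d>p$. By contrast, the positivity and integrality of the two displayed quantities are routine once Corollary~\ref{cor-proj} and Lemma~\ref{lem-pri}(1) are in hand.
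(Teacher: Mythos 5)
Your proposal is correct and follows essentially the same route as the paper: positivity and integrality are read off from Corollary~\ref{cor-proj} combined with Lemma~\ref{lem-pri}(1), and the ``in particular'' claim comes from writing $\nu_T(y)=b_T k$ via Theorem~\ref{thm-order} and forcing $p\mid\bigl((|S|^2,|T|)-k\bigr)$ while $(|S|^2,|T|)$ is coprime to $p$. Your write-up merely makes explicit the last step that the paper compresses into ``which implies $(|S|^2,|T|)>p$''.
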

\begin{proof}
Since $S$ and $T$ form a primitive formally dual pair, by Lemma~\ref{lem-pri}(1), for each $y \in G \sm\{1\}$, we have $\nu_S(y)<|S|$ and $\nu_T(y) < |T|$. Together with Corollary~\ref{cor-proj}, we know that
$$
\frac{|S|^2}{p}(1-\frac{\nu_T(y)}{|T|}), \quad \frac{|T|^2}{p}(1-\frac{\nu_S(y)}{|S|})
$$
are both positive integers.

Noting that $b_T \big| \nu_T(y)$, set $\nu_T(y)=b_Ti=\frac{|T|}{(|S|^2,|T|)}i$ for some $0 \le i < (|S|^2,|T|)$. Consequently,
$\frac{|S|^2}{p}(1-\frac{i}{(|S|^2,|T|)})$ is a positive integer for some $0 \le i < (|S|^2,|T|)$. If $p \nmid |S|$, we have $p \big| ((|S|^2,|T|)-i)$ for some $0 \le i < (|S|^2,|T|)$, which implies $(|S|^2,|T|)>p$. A similar argument shows that $(|T|^2,|S|)>p$ if $p \nmid |T|$.
\end{proof}

\begin{remark}
In Theorem~\ref{thm-weight}, if $G_p$ is cyclic, then the restrictions $(|S|^2,|T|)>p$ and $(|T|^2,|S|)>p$ are implied by Theorem~\ref{thm-order}.
\end{remark}

Next, we want to derive some restrictions on the character values $|\chi(S)|^2$ and $|\chi(T)|^2$. Let $a$ and $n$ be two positive integers. We say that $a$ is a \emph{primitive root} modulo $n$ if $a$ generates the unit group $\Z_n^*$. Let $n$ be a positive integer and $p$ be a prime divisor of $n$. Write $n = p^bn^\pr$, where $b \ge 0$ and $n^\pr$ is coprime with $p$. We say that $p$ is \emph{self-conjugate} modulo $n$, if there exists a nonnegative integer $j$, such that $p^j \equiv -1 \pmod{n^\pr}$. If a prime $p$ is a primitive root modulo $n$, then $p$ is self-conjugate modulo~$n$. For $X \in \Z[\ze_n]$, we use $(X)$ to denote the principal ideal generated by $X$ in $\Z[\ze_n]$. The following is a preparatory lemma.

\begin{lemma}\label{lem-chardiv}
Let $p$ and $q$ be primes and $q$ be a primitive root modulo $p^e$, where $e$ is a positive integer. Let $X \in \Z[\ze_{p^e}]$. Suppose $q^f \big\| |X|^2 $, then $f$ must be even. In particular, if $q^f \big| |X|^2$ and $f$ is odd, then $q^{f+1} \big| |X|^2$.
\end{lemma}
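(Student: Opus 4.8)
The plan is to exploit the fact that the hypothesis on $q$ forces $q$ to be \emph{inert} in the cyclotomic field $\Q(\ze_{p^e})$, after which a valuation-theoretic argument built around complex conjugation finishes the job.

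First I would record two standard facts about the ring $\Z[\ze_{p^e}]$, which is the full ring of integers of $K=\Q(\ze_{p^e})$. Since $q$ is a primitive root modulo $p^e$, it is in particular coprime to $p$, so $q$ is unramified in $K$. By the decomposition law in cyclotomic fields, the residue degree of $q$ equals the multiplicative order of $q$ modulo $p^e$, which by hypothesis is $\var(p^e)=[K:\Q]$. Hence $q$ is inert: the ideal $(q)$ is a single prime ideal $\mathfrak{q}$ of $\Z[\ze_{p^e}]$, with ramification index $e(\mathfrak{q}\mid q)=1$.

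Next I would introduce the normalized $\mathfrak{q}$-adic valuation $v=v_{\mathfrak{q}}$ on $K$. Because $(q)=\mathfrak{q}$ is prime, divisibility of $|X|^2$ by a power of $q$ is controlled entirely by $v$: the relation $q^f\,\|\,|X|^2$ means precisely $v(|X|^2)=f$, where $e(\mathfrak{q}\mid q)=1$ gives $v(q)=1$. The key step is then to compute $v(|X|^2)$. Writing $|X|^2=X\overline{X}$, where complex conjugation is the automorphism $\sig_{-1}\in\Gal(K/\Q)$ sending $\ze_{p^e}$ to $\ze_{p^e}^{-1}$, I note that $\mathfrak{q}=(q)$ is generated by a rational prime and is therefore fixed by every element of $\Gal(K/\Q)$, in particular by $\sig_{-1}$. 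Consequently $v(\overline{X})=v(\sig_{-1}(X))=v(X)$, so that
$$
v(|X|^2)=v(X)+v(\overline{X})=2\,v(X),
$$
which is even. Combined with $v(|X|^2)=f$, this gives that $f$ is even, proving the main assertion; the ``in particular'' statement is then immediate, since if $q^f\mid|X|^2$ with $f$ odd, the exact power of $q$ dividing $|X|^2$ must be at least $f+1$.

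I expect the only delicate point to be the invocation of the decomposition law together with the identification of $\Z[\ze_{p^e}]$ as the maximal order, both of which are entirely standard for cyclotomic fields; the remaining valuation bookkeeping is routine once the inertness of $q$ and the Galois-stability of $\mathfrak{q}$ under complex conjugation are in place.
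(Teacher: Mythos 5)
Your proof is correct and follows essentially the same route as the paper: both arguments rest on the fact that the primitive-root hypothesis makes $(q)$ a single prime ideal of $\Z[\ze_{p^e}]$, which is stable under complex conjugation, so the exact power of $q$ in $X\ol{X}$ is twice that in $X$. The only cosmetic difference is that you observe the conjugation-invariance of $(q)$ directly (it is generated by a rational prime), whereas the paper routes this through the self-conjugacy of $q$ modulo $p^e$; both are valid.
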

\begin{proof}
Since $q$ is a primitive root modulo $p^e$, we know that $(q)$ is a prime ideal in $\Z[\ze_{p^e}]$ \cite[Chapter 13, Theorem 2]{IR}, which we denote by $\cQ$. By $q^f \big\| |X|^2 $, we have $\cQ^f \big\|(X)(\ol{X})$. Suppose $\cQ^{f_1} \big\|(X)$ and $\cQ^{f_2} \big\|(\ol{X})$, where $f=f_1+f_2$. Since $q$ is self-conjugate modulo $p^e$, we have that $\cQ$ is invariant under complex conjugation \cite[Chapter VI, Corollary 15.5]{BJL}. Thus,  $\cQ^{f_1}=\ol{\cQ}^{f_1}$ and $\cQ^{f_1} \big\|(\ol{X})$, which implies $f_1=f_2$ and $f$ being even.
\end{proof}

Employing Lemma~\ref{lem-chardiv}, we obtain some information about the divisibility of character values $|\chi(S)|^2$ and $|\chi(T)|^2$.

\begin{proposition}\label{prop-chardiv}
Let $S$ and $T$ be a primitive formally dual pair in a group $G$. Then, for each $\chi \in \wh{G}$, we have $a_S \big| |\chi(S)|^2$ and $a_T \big| |\chi(T)|^2$. Let $p$ be a prime divisor of $|G|$. Let $H$ be a direct factor of $G_p$ with $H \cong \Z_{p^e}$. Moreover, let $\rho: G \rightarrow H$ be the natural projection and $q$ be a prime and a primitive root modulo $p^e$. Then we have the following.
\begin{itemize}
\item[(1)] Suppose $q^{f_1} \big| a_S$ with $f_1$ being odd. Then for each $\wti{\chi} \in \wh{H}$, we have $q^{f_1+1} \big| |\wti{\chi}(\rho(S))|^2$.
\item[(2)] Suppose $q^{f_2} \big| a_T$ with $f_2$ being odd. Then for each $\wti{\chi} \in \wh{H}$, we have $q^{f_2+1} \big| |\wti{\chi}(\rho(T))|^2$.
\end{itemize}
\end{proposition}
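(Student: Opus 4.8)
The plan is to derive both divisibility claims directly from the defining identity \eqref{eqn-def} together with the divisibility of weight enumerators established in Theorem~\ref{thm-order}, and then feed the outcome into the field-theoretic Lemma~\ref{lem-chardiv}.

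First I would prove the global statement that $a_S \big| |\chi(S)|^2$ for every $\chi \in \wh{G}$. Writing $\chi=\chi_y$ and using \eqref{eqn-def}, we have $|\chi_y(S)|^2=\frac{|S|^2}{|T|}\nu_T(y)$. Dividing by $a_S=\frac{|S|^2}{(|S|^2,|T|)}$ produces the quotient $\frac{(|S|^2,|T|)}{|T|}\nu_T(y)=\frac{\nu_T(y)}{b_T}$, which is an integer because Theorem~\ref{thm-order} guarantees $b_T \big| \nu_T(y)$. Hence $a_S \big| |\chi(S)|^2$, and the symmetric computation using \eqref{eqn-def2} and $b_S \big| \nu_S(y)$ yields $a_T \big| |\chi(T)|^2$.

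For parts (1) and (2), the key observation is that a character $\wti{\chi} \in \wh{H}$ pulls back to a character $\chi = \wti{\chi} \circ \rho$ of $G$ along the natural projection $\rho: G \rightarrow H$, and that $\chi(S)=\sum_{s \in S}\wti{\chi}(\rho(s))=\wti{\chi}(\rho(S))$, so $|\wti{\chi}(\rho(S))|^2=|\chi(S)|^2$. Since $H \cong \Z_{p^e}$, the value $\wti{\chi}(\rho(S))$ lies in $\Z[\ze_{p^e}]$. By the global statement just proved, $a_S \big| |\chi(S)|^2$, so that $q^{f_1} \big| |\wti{\chi}(\rho(S))|^2$. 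As $f_1$ is odd and $q$ is a primitive root modulo $p^e$, Lemma~\ref{lem-chardiv} upgrades this to $q^{f_1+1} \big| |\wti{\chi}(\rho(S))|^2$, which is (1). The proof of (2) is identical after interchanging the roles of $S$ and $T$ and replacing $a_S$ by $a_T$.

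I do not anticipate a serious obstacle: the argument is essentially a bookkeeping of greatest common divisors followed by one invocation each of Theorem~\ref{thm-order} and Lemma~\ref{lem-chardiv}. The only point demanding care is confirming that $\wti{\chi}(\rho(S)) \in \Z[\ze_{p^e}]$ so that Lemma~\ref{lem-chardiv} may be applied; this holds because $H$ is cyclic of order $p^e$, whence every character value of $\wti{\chi}$ is a $p^e$-th root of unity.
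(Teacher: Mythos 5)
Your proposal is correct and follows essentially the same route as the paper: the divisibility $a_S \mid |\chi(S)|^2$ comes from \eqref{eqn-def} combined with $b_T \mid \nu_T(y)$ (the paper derives this directly from the integrality of $|\chi(S)|^2$, which is the same gcd bookkeeping), and parts (1) and (2) are obtained by identifying $|\wti{\chi}(\rho(S))|^2$ with $|\chi(S)|^2$ for the corresponding character of $G$ and then invoking Lemma~\ref{lem-chardiv}. The only cosmetic difference is that you pull characters back from $\wh{H}$ to $\wh{G}$ while the paper pushes them forward, which is the same bijective correspondence.
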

\begin{proof}
By \eqref{eqn-def}, we have $|\chi(S)|^2 \in \Z$, and hence, $a_S \big| |\chi(S)|^2$, for each $\chi \in \wh{G}$. Similarly, we can show that  $a_T \big| |\chi(T)|^2$, for each $\chi \in \wh{G}$.

We only prove (1), since the proof of (2) is analogous. Each $\chi \in \wh{G}$ induces a character $\wti{\chi} \in \wh{H}$, such that $\wti{\chi}(\rho(g))=\chi(g)$, for each $g \in G$. Consequently, we have $|\wti{\chi}(\rho(S))|^2=|\chi(S)|^2$ and $a_S \big| |\wti{\chi}(\rho(S))|^2$ for each $\wti{\chi} \in \wh{H}$. Since $q^{f_1} \big| |\wti{\chi}(\rho(S))|^2 \in \Z[\ze_{p^e}]$ and $q$ is a primitive root modulo $p^e$, by Lemma~\ref{lem-chardiv}, we have $q^{f_1+1} \big| |\wti{\chi}(\rho(S))|^2$, for each $\wti{\chi} \in \wh{H}$.
\end{proof}

The next example illustrates how to combine Theorem~\ref{thm-weight} and Proposition~\ref{prop-chardiv} to derive the nonexistence of some primitive formally dual pairs.

\begin{example}
We are going to show there exists no primitive formally dual pair $S$ and $T$ in $G=\Z_{180}$, with $|S|=6$ and $|T|=30$. Suppose otherwise, let $y \in G$ be a nonidentity element of order $5$ and $\chi_y \in \wh{G}$ be a character corresponding to $y$. By Theorem~\ref{thm-weight}, we have $\nu_T(y)=5$ and therefore $|\chi_y(S)|^2=6$. Let $\rho_y: G \rightarrow H$ be the natural projection, where $H=G/\ker \chi_y \cong \Z_5$ by Lemma~\ref{lem-char}(2). Note that $\chi_y$ induces a character $\wti{\chi_y}$ defined on $H$, such that $|\chi_y(S)|^2=|\wti{\chi_y}(\rho_y(S))|^2$. Note that $a_S=6$ and by Proposition~\ref{prop-chardiv}, we have $6 \big| |\wti{\chi_y}(\rho_y(S))|^2 \in \Z[\ze_5]$.  Since both $2$ and $3$ are primitive roots modulo $5$, by Proposition~\ref{prop-chardiv}(2), we have $2^2 \big| |\wti{\chi_y}(\rho_y(S))|^2$ and $3^2 \big| |\wti{\chi_y}(\rho_y(S))|^2$, which implies $36 \big| |\wti{\chi_y}(\rho_y(S))|^2$. Therefore, we have $36 \big| |\chi_y(S)|^2$, which contradicts $|\chi_y(S)|^2=6$.
\end{example}

Under the self-conjugate assumption, we have the following theorem, which extends Proposition~\ref{prop-cyclic}(10).

\begin{theorem}\label{thm-selfconj}
Let $G$ be a group so that a prime $p \big| |G|$ and $G_p$ is cyclic. Let $p$ be self-conjugate modulo $\exp(G)$. Suppose $S$ and $T$ form a primitive formally dual pair in $G$, with $p^{l_1} \big\| |S|$ and $p^{l_2} \big\| |T|$. Then $l_1=l_2=1$, and consequently, $p^2 \big\| |G|$.
\end{theorem}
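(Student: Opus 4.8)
The plan is to reduce the whole statement to a single divisibility bound on the character sums and then finish with elementary arithmetic. Write $p^{l_1}\,\|\,|S|$ and $p^{l_2}\,\|\,|T|$. Since $|G|=|S|\cdot|T|$ by Proposition~\ref{prop-non}(1), we have $p^{l_1+l_2}\,\|\,|G|$, so the cyclic Sylow subgroup $G_p$ has order $p^{l_1+l_2}$. A direct count of $p$-parts gives $p^{r}\,\|\,a_S$ with $r=\max(0,2l_1-l_2)$, and likewise $p^{r'}\,\|\,a_T$ with $r'=\max(0,2l_2-l_1)$. I claim the theorem is equivalent to showing $r\le 1$ and $r'\le 1$: granting these two inequalities, an elementary case analysis (using $l_1+l_2\ge 1$, which holds because $p\mid|G|$) forces first $l_1,l_2\ge 1$ and then $l_1+l_2\le 2$, hence $l_1=l_2=1$ and $p^2\,\|\,|G|$. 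So the real content is the bound $r\le1$, the bound $r'\le1$ being identical with the roles of $S$ and $T$ exchanged, which is legitimate by Remark~\ref{rem-def}(2).

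To prove $r\le 1$ I would argue by contradiction: suppose $r\ge2$. By Proposition~\ref{prop-chardiv} we have $a_S\mid|\chi(S)|^2$ for every $\chi\in\wh G$, so $p^2\mid|\chi(S)|^2$ for every $\chi$. Now the self-conjugacy of $p$ enters exactly as in Lemma~\ref{lem-chardiv}: since $p$ is self-conjugate modulo $\exp(G)$, it is self-conjugate modulo the order of each $\chi$ (a divisor of $\exp(G)$), and therefore every prime ideal $\mathfrak p$ above $p$ in the relevant cyclotomic ring is fixed by complex conjugation \cite[Chapter VI, Corollary 15.5]{BJL}. Consequently $v_{\mathfrak p}(|\chi(S)|^2)=v_{\mathfrak p}(\chi(S))+v_{\mathfrak p}(\overline{\chi(S)})=2\,v_{\mathfrak p}(\chi(S))$ for each such $\mathfrak p$; combined with $p^2\mid|\chi(S)|^2$ this yields $v_{\mathfrak p}(\chi(S))\ge v_{\mathfrak p}(p)$ for all $\mathfrak p\mid p$, that is, $p\mid\chi(S)$ as an algebraic integer, for every $\chi\in\wh G$. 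Because $G_p$ is cyclic, Ma's Lemma (the same tool invoked in the proof of Theorem~\ref{thm-order}) applies to $S\in\Z[G]$ and produces a decomposition $S=pX+PY$ with $X,Y\in\Z[G]$, where $P$ is the unique subgroup of order $p$ in $G_p$.

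This decomposition immediately contradicts primitivity. Reducing modulo $p$, we get $[S]_g\equiv[PY]_g\pmod p$, and $[PY]_g$ is constant as $g$ runs over any coset of $P$; since $S$ has $0/1$ coefficients and $0\not\equiv1\pmod p$, the set $S$ must be constant on every coset of $P$, i.e. $S$ is a union of cosets of the nontrivial subgroup $P$, contradicting the primitivity of $S$. Hence $r\le1$, and symmetrically $r'\le1$, which closes the proof after the arithmetic deduction. The one delicate point, and the step I would write out most carefully, is the passage from $p^2\mid|\chi(S)|^2$ to $p\mid\chi(S)$ for characters $\chi$ whose order is divisible by $p$, where $p$ ramifies in the cyclotomic ring: there the identity $v_{\mathfrak p}(|\chi(S)|^2)=2\,v_{\mathfrak p}(\chi(S))$ must be verified prime-by-prime above $p$, but it holds uniformly precisely because self-conjugacy fixes each such $\mathfrak p$, so the conclusion $p\mid\chi(S)$ is uniform in $\chi$ and Ma's Lemma can be applied globally.
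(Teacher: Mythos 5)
Your proof is correct and follows essentially the same route as the paper: the core chain ($p^2 \mid |\chi(S)|^2$ for all $\chi$, then self-conjugacy to get $p \mid \chi(S)$, then Ma's Lemma to write $S=pX+PY$, then concluding $S$ is a union of cosets of $P$ and contradicting primitivity) is exactly the paper's argument. The only differences are bookkeeping: the paper reduces to one application of this argument via the assumption $l_1\ge l_2$ and the observation that $(l_1,l_2)\ne(1,1)$ forces $2l_1-l_2\ge 2$, whereas you prove the two symmetric bounds on $a_S$ and $a_T$ and finish with arithmetic; and the paper deduces $X=0$ from nonnegativity of coefficients where you reduce modulo $p$ — both are fine.
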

\begin{proof}
Without loss of generality, we assume $l_1 \ge l_2 \ge 0$. By \eqref{eqn-def}, for each $y \in G$, we have $p^{2l_1-l_2} \big| |\chi_y(S)|^2$. It is easy to verify that except $l_1=l_2=1$, we have $p^2 \big| |\chi(S)|^2$ for each $\chi \in \wh{G}$. When $l_1=l_2=1$ does not hold, since $p$ is self-conjugate module $\exp(G)$, by \cite[Chapter VI, Lemma 13.2]{BJL}, $p \big| \chi(S)$ for each $\chi \in \wh{G}$. Note that $G_p$ is cyclic, by Ma's Lemma \cite{Ma}, we have
$$
S=pX+PY,
$$
where $X,Y \in \Z[G]$ and $P$ is the unique cyclic subgroup of order $p$ in $G_p$. Since $S$ is a set, then $X=0$. Thus $S=PY$. By Lemma~\ref{lem-pri}(2), $S$ and $T$ do not form a primitive formally dual pair.
\end{proof}

We observe that the TITO configuration satisfies the conditions in the above theorem, where $G=\Z_4$, $p=2$ and $l_1=l_2=1$.

We proceed to show that the structural information about the group $G$ provides a lower bound on the size of a primitive subset in $G$. The \emph{minimal number of generators of $G$}, is the smallest possible size of a subset in $G$, which generates $G$.

\begin{proposition}\label{prop-generator}
Let $S$ be a primitive subset in $G$. Suppose the minimal number of generators of $G$ is $s$. Then $|S| \ge s+1$.
\end{proposition}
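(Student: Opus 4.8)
The plan is to use only the first half of primitivity, namely that $S$ is not contained in a coset of a proper subgroup of $G$; the second condition (not being a union of cosets of a nontrivial subgroup) will play no role. The guiding idea is that after a translation bringing the identity into $S$, primitivity forces $S$ to be a generating set, and one of its elements (the identity) is redundant, so $S \sm \{1\}$ already generates $G$.

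First I would normalize $S$ by translation. Pick any $g \in S$ and replace $S$ by $g^{-1}S$, which has the same cardinality. Since any translate of a coset of a subgroup is again such a coset, the property ``not contained in a coset of a proper subgroup'' is preserved under this operation, and now $1 = g^{-1}g \in g^{-1}S$. Relabelling, I may assume $1 \in S$ while retaining that $S$ is not contained in a coset of a proper subgroup of $G$.

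Next, with $1 \in S$, I would argue that $\lan S \ran = G$. Indeed, if $\lan S \ran$ were a proper subgroup $H$ of $G$, then $S \subseteq H = 1 \cdot H$ would be contained in a coset of the proper subgroup $H$, contradicting primitivity. Hence $\lan S \ran = G$. Finally, since the identity contributes nothing to a generating set, $\lan S \sm \{1\} \ran = \lan S \ran = G$, so $S \sm \{1\}$ is a generating set of $G$ of size $|S| - 1$. By definition of $s$ as the minimal number of generators, $s \le |S| - 1$, which is exactly $|S| \ge s + 1$.

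I do not expect a genuine obstacle here: the argument is essentially a one-line observation, and the only points requiring care are the bookkeeping facts that the coset condition is translation-invariant and that a subgroup is a coset of itself, together with the harmless edge case $S = \{1\}$ in the trivial group (where $s = 0$ and the bound reads $1 \ge 1$). It is worth flagging explicitly that the stronger, two-sided primitivity hypothesis is not needed for this lower bound.
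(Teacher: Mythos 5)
Your proof is correct and is essentially the paper's own argument: the paper also normalizes so that $1 \in S$ and observes that if $|S| < s+1$ then $S$ (equivalently $S \sm \{1\}$, with at most $s-1$ elements) generates only a proper subgroup, so $S$ lies in a coset of a proper subgroup and cannot be primitive. Your version is just the direct form of the paper's contrapositive, with the translation-invariance and the redundancy of the identity spelled out explicitly.
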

\begin{proof}
With out loss of generality, we can assume that $1 \in S$. Since the minimal number of generators of $G$ is $s$, if $|S|<s+1$, then the elements of $S$ generate a proper subgroup of $G$. Therefore, $S$ is contained in a proper subgroup of $G$ and not a primitive subset.
\end{proof}

Below we present an exponent bound for the groups containing primitive formally dual sets.
\begin{proposition}\label{prop-projorder}
Let $G$ be a group and $p$ be a prime divisor of $|G|$. Suppose $\exp(G_p)=p^e$. Let $H$ be a cyclic subgroup of order $p^e$ and a direct factor of $G$. Suppose $\rho: G \rightarrow H$ is the natural projection and $S$ is an even set and a primitive subset in $G$. Then the following holds.
\begin{itemize}
\item[(1)] For any $y \in H$ and $i \in \Z_{p^e}^*$, we have $[\rho(S)\rho(S)^{(-1)}]_{y^i}=[\rho(S)\rho(S)^{(-1)}]_{y}$.
\item[(2)] $|S|^2-|S| \ge |S|^2-[\rho(S)\rho(S)^{(-1)}]_1 \ge p^{e-1}(p-1)$.
\end{itemize}
\end{proposition}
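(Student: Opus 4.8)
The plan is to push the even-set and primitivity structure of $S$ through the projection $\rho$, viewing $\rho(S)$ as an element of $\Z[H]$. The crucial preliminary observation for part (1) is that although $\rho(S)$ is only a multiset in $H$, all of its character values are rational integers. Indeed, since $H$ is a direct factor, every $\psi\in\wh{H}$ lifts to the character $\chi=\psi\circ\rho\in\wh{G}$, and then $\psi(\rho(S))=\chi(S)$; because $S$ is an even set, $SS^{(-1)}$ is an integer combination of subgroups, so $|\chi(S)|^2=\chi(SS^{(-1)})$ is a rational integer, whence $|\psi(\rho(S))|^2\in\Z$ for every $\psi\in\wh{H}$. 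To deduce (1) I would then run a Galois argument inside $\Q(\zeta_{p^e})$: by Fourier inversion (Proposition~\ref{prop-fourier}) applied in $H$, one has $[\rho(S)\rho(S)^{(-1)}]_y=\frac{1}{p^e}\sum_{\psi\in\wh{H}}|\psi(\rho(S))|^2\,\overline{\psi(y)}$, and applying the automorphism $\sig_i\in\Gal(\Q(\zeta_{p^e})/\Q)$ with $\sig_i(\zeta_{p^e})=\zeta_{p^e}^i$ fixes the left-hand side (a rational integer) and each $|\psi(\rho(S))|^2$ (also a rational integer), while $\sig_i(\overline{\psi(y)})=\overline{\psi(y^i)}$. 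This converts the right-hand side into $[\rho(S)\rho(S)^{(-1)}]_{y^i}$, proving (1). (Equivalently, the integrality just established is exactly the hypothesis of Proposition~\ref{prop-decom}(1), whose proof applies verbatim to $\rho(S)\in\Z[H]$.)

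For the left inequality in part (2) I would argue directly: $[\rho(S)\rho(S)^{(-1)}]_1$ counts the ordered pairs $(s_1,s_2)\in S\times S$ with $\rho(s_1)=\rho(s_2)$, and the diagonal pairs alone contribute $|S|$, so $[\rho(S)\rho(S)^{(-1)}]_1\ge|S|$, which rearranges to $|S|^2-|S|\ge|S|^2-[\rho(S)\rho(S)^{(-1)}]_1$.

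For the right inequality I would use the total-count identity $|S|^2=\sum_{h\in H}[\rho(S)\rho(S)^{(-1)}]_h$, so that $|S|^2-[\rho(S)\rho(S)^{(-1)}]_1=\sum_{h\ne 1}[\rho(S)\rho(S)^{(-1)}]_h$, and then restrict attention to the elements of order $p^e$ in $H\cong\Z_{p^e}$. These form a single orbit $\orb(g)$ of a generator $g$, of cardinality $p^{e-1}(p-1)$, and by part (1) the coefficient $c:=[\rho(S)\rho(S)^{(-1)}]_h$ is the same for all such $h$. The key is to show $c\ge 1$, which I would do by contradiction: if $c=0$, then no difference $\rho(s_1)\rho(s_2)^{-1}$ has order $p^e$, so every such difference lies in the unique subgroup $L<H$ of order $p^{e-1}$; fixing $s_0\in S$ gives $\rho(s)\in\rho(s_0)L$ for all $s\in S$, so $\{\rho(s)\mid s\in S\}$ sits in one coset of $L$, and therefore $S$ is contained in a coset of $\rho^{-1}(L)$, a subgroup of index $p$ in $G$. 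This contradicts that $S$ is a primitive subset (via Lemma~\ref{lem-priset}). Hence $c\ge 1$, and summing over the $p^{e-1}(p-1)$ elements of order $p^e$ yields $\sum_{h\ne 1}[\rho(S)\rho(S)^{(-1)}]_h\ge c\cdot p^{e-1}(p-1)\ge p^{e-1}(p-1)$.

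The main obstacle is the step $c\ge 1$: one must correctly translate the vanishing of the coefficient on \emph{all} generators of $H$ into a confinement of $S$ to a single coset of the proper subgroup $\rho^{-1}(L)$, which is precisely where primitivity is invoked. The orbit-constancy supplied by part (1) is what makes this clean, since it upgrades the existence of a single pair whose projected difference has full order $p^e$ into positivity of the coefficient simultaneously on all $p^{e-1}(p-1)$ elements of order $p^e$.
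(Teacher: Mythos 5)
Your proof is correct, but it reaches part (1) by a genuinely different mechanism than the paper. The paper's proof exploits the even-set decomposition directly: it writes $SS^{(-1)}=\sum_i \la_i L_i$ with the $L_i$ subgroups of $G$, pushes this forward to $\rho(S)\rho(S)^{(-1)}=\sum_i \la_i\rho(L_i)$, and then uses the fact that the subgroups of the cyclic group $H$ form a chain $\{1\}=H_0\lneqq H_1\lneqq\cdots\lneqq H_e=H$ to rewrite this as $\sum_{j=0}^e\mu_j H_j$; constancy of the coefficient on each order class of $H$ is then immediate. You instead only use the weaker consequence that $|\psi(\rho(S))|^2\in\Z$ for all $\psi\in\wh{H}$, and recover the orbit-constancy by Fourier inversion in $H$ together with the Galois action of $\Gal(\Q(\zeta_{p^e})/\Q)$ --- essentially transplanting the proofs of Propositions~\ref{prop-orbit} and~\ref{prop-decom} to $\rho(S)$. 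Your route is marginally more general (it never needs the subgroup chain, only integrality of character values), while the paper's route hands you the explicit form $\sum_j\mu_j H_j$, which it then reuses in part (2). For part (2) the two arguments are dual versions of the same idea: the paper shows the top coefficient $\mu_e$ is positive by noting that $\mu_e=0$ would force $|\chi(S)|^2=|S|^2$ for a nonprincipal lifting character principal on $H_{e-1}$, contradicting primitivity via Lemma~\ref{lem-priset}; you show the coefficient $c$ on the order-$p^e$ elements is positive by noting that $c=0$ would confine $S$ to a coset of the index-$p$ subgroup $\rho^{-1}(L)$, contradicting primitivity directly on the group side. By Lemma~\ref{lem-priset} these are the same obstruction, and both finish by summing the constant coefficient over the $p^{e-1}(p-1)$ generators of $H$.
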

\begin{proof}
Since $S$ is an even set, we have $SS^{(-1)}=\sum_{i} \la_iL_i$, where $L_i$'s are subgroups of $G$. Applying the projection $\rho$ on both sides, we have
\begin{equation}\label{eqn-decom}
\rho(S)\rho(S)^{(-1)}=\sum_{i} \la_i \rho(L_i),
\end{equation}
where the $\rho(L_i)$'s are multisets formed by copies of subgroups of $H$. Suppose $h$ is a generator of $H$. For $0 \le i \le e$, define a subgroup $H_i=\lan h^{p^{e-i}}\ran$. Then $\{1\}=H_0 \lneqq H_1 \lneqq \cdots \lneqq H_e=H$ forms a chain containing all subgroups of $H$. Therefore, we can rewrite \eqref{eqn-decom} as
$$
\rho(S)\rho(S)^{(-1)}=\sum_{i=0}^e \mu_i H_i,
$$
where $\mu_i$, $0 \le i \le e$, are integers. Thus, for any $y \in H$, we know that $[\rho(S)\rho(S)^{(-1)}]_y$ only depends on the order of $y$. Consequently, the conclusion of (1) follows.

We claim the $\mu_e \ge 1$. Clearly, $\mu_e \ge 0$. Assume that $\mu_e=0$. There exists a nonprincipal character $\wti{\chi} \in \wh{H}$, which is principal on $H_{e-1}$. The character $\wti{\chi}$ induces a lifting character $\chi \in \wh{G}$, such that
$$
|\chi(S)|^2=|\wti{\chi}(\rho(S))|^2=|S|^2.
$$
By Lemma~\ref{lem-priset}, $S$ is not a primitive subset, which is a contradiction.

Applying (1), for any $i \in \Z_{p^e}^*$, we have $[\rho(S)\rho(S)^{(-1)}]_{h^i}=\mu_e$. Considering the number of nonidentity elements in $[\rho(S)\rho(S)^{(-1)}]$, we have the following exponent bound:
$$
|S|^2-|S| \ge |S|^2-[\rho(S)\rho(S)^{(-1)}]_1 \ge p^{e-1}(p-1)\mu_e \ge p^{e-1}(p-1).
$$
\end{proof}

Furthermore, for primitive formally dual sets whose size is a prime, we have a more precise description as follows.

\begin{corollary}\label{cor-projorder}
\item[(1)] A primitive formally dual set of size $2$ must be a $(2,2,2,1)$-RDS in $\Z_4$ relative to $2\Z_4$.
\item[(2)] Let $p$ be an odd prime. Suppose there exists a primitive formally dual set of size $p$ in $G$. Then $G_p$ is elementary abelian and $G_p$ contains a subgroup isomorphic to $\Zp \times \Zp$.
\end{corollary}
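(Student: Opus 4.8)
The plan is to treat the two parts separately, in each case combining the projection machinery of Proposition~\ref{prop-projorder} with the nonexistence facts already established.

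For part (1), I would write $S=\{a,b\}$ and set $g=ab^{-1}\neq 1$, so that $SS^{(-1)}=2+g+g^{-1}$. Since $S\subseteq a\lan g\ran$ and $S$ is primitive, $\lan g\ran$ cannot be a proper subgroup, forcing $G=\lan g\ran$ to be cyclic. By Corollary~\ref{cor-formaleven} the set $S$ is even, so by Proposition~\ref{prop-orbit} the orbit containing $g$ must lie inside the nonidentity support $\{g,g^{-1}\}$ of $[SS^{(-1)}]$; hence $\orb(g)\subseteq\{g,g^{-1}\}$, that is $\var(\ord(g))\le 2$, which leaves only $\ord(g)\in\{2,3,4,6\}$ and thus $|G|\in\{2,3,4,6\}$. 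Finally, Proposition~\ref{prop-non}(2) states that $|G|$ is not square-free, and among these four values only $|G|=4$ qualifies. Therefore $G=\Z_4$ and $SS^{(-1)}=2-\lan g^2\ran+G$, which is exactly the defining relation of a $(2,2,2,1)$-RDS in $\Z_4$ relative to $2\Z_4$.

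For part (2), the crux is to prove $\exp(G_p)=p$. Writing $\exp(G_p)=p^e$ and feeding $|S|=p$ into the exponent bound of Proposition~\ref{prop-projorder}(2) gives $p(p-1)=|S|^2-|S|\ge p^{e-1}(p-1)$, hence $e\le 2$. To exclude $e=2$, I would take a cyclic direct factor $H\cong\Z_{p^2}$ of $G_p$ and examine the equality case: the chain of inequalities in Proposition~\ref{prop-projorder}(2) then collapses, forcing $[\rho(S)\rho(S)^{(-1)}]_1=|S|$ (so $\rho$ restricts to an injection on $S$) and $\mu_e=1$ in the subgroup decomposition $\rho(S)\rho(S)^{(-1)}=\mu_0\{1\}+\mu_1 H_1+\mu_2 H$, where $H_1$ is the order-$p$ subgroup of $H$. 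Solving the two resulting linear relations, namely total mass $p^2$ and identity coefficient $p$, yields $\mu_0=p$, $\mu_1=-1$, $\mu_2=1$, so that $\rho(S)\rho(S)^{(-1)}=p-H_1+H$. This exhibits $\rho(S)$ as a $(p,p,p,1)$-RDS in the cyclic group $\Z_{p^2}$, which Proposition~\ref{prop-cycRDS} forbids for every odd prime $p$. Hence $e=1$ and $G_p$ is elementary abelian.

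The remaining conclusion is short. Since $S$ and its partner $T$ form a primitive formally dual pair, Proposition~\ref{prop-non} gives $|G|=|S|\,|T|=p|T|$ and $(|S|,|T|)>1$; as $|S|=p$ is prime this forces $p\mid|T|$ and therefore $p^2\mid|G|$. Combined with $\exp(G_p)=p$, the $p$-part $G_p$ is elementary abelian of order at least $p^2$, so $G_p\cong\Z_p^k$ with $k\ge 2$, which contains a copy of $\Zp\times\Zp$. I expect the main obstacle to be the $e=2$ step in part (2): one must first extract the exact group-ring identity $\rho(S)\rho(S)^{(-1)}=p-H_1+H$ from the equality case of the exponent bound, and only then can the cyclic RDS nonexistence result be brought to bear; everything else is bookkeeping.
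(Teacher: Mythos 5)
Your proposal is correct in both parts. For part (2) you follow essentially the paper's own route: the exponent bound of Proposition~\ref{prop-projorder}(2) with $|S|=p$ gives $e\le 2$, and the $e=2$ case is killed by showing that equality throughout forces $\rho(S)$ to be a $(p,p,p,1)$-RDS in $\Z_{p^2}$, contradicting Proposition~\ref{prop-cycRDS}; the only difference is cosmetic, in that you extract the identity $\rho(S)\rho(S)^{(-1)}=p-H_1+H$ by solving for the coefficients $\mu_0,\mu_1,\mu_2$ in the subgroup decomposition, whereas the paper counts the $p^2-p$ nonidentity elements of $[\rho(S)\rho(S)^{(-1)}]$ and uses Proposition~\ref{prop-projorder}(1) to see they are exactly the generators of $H$. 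The closing step ($p\mid |T|$ from $(|S|,|T|)>1$, hence $p^2\mid |G|$) is identical. For part (1), however, your route genuinely differs: the paper invokes Proposition~\ref{prop-generator} to get that $G$ is cyclic and then applies the exponent bound $2\ge p^{f-1}(p-1)$ prime by prime, while you write $SS^{(-1)}=2+g+g^{-1}$, deduce $G=\lan g\ran$ directly from primitivity, use the even-set/orbit condition (Corollary~\ref{cor-formaleven} and Proposition~\ref{prop-orbit}) to get $\var(|G|)\le 2$ and hence $|G|\in\{2,3,4,6\}$, and finish with the non-square-free condition of Proposition~\ref{prop-non}(2). Your version is fully rigorous and in fact disposes of the residual small orders (in particular the possibility of an odd prime factor with $p^{f-1}(p-1)=2$) more transparently than the paper's one-line inference ``which implies $p=2$''; what it costs is the explicit computation of $SS^{(-1)}$, which is trivial here but would not generalize to larger $|S|$ the way the paper's generator-count argument does.
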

\begin{proof}
(1) Let $S$ be a primitive formally dual set of size $2$ in $G$. By Proposition~\ref{prop-generator}, the group $G$ has only one generator and is a cyclic group $\Z_N$. Let $p$ be a prime divisor of $N$ and suppose $p^f \big\| N$ for some integer $f$. Then $\exp(G_p)=p^f$. By Proposition~\ref{prop-projorder}(2), we have
$$
2=|S|^2-|S| \ge p^{f-1}(p-1),
$$
which implies $p=2$ and $f \in \{1,2\}$. If $f=1$, then $G=\Z_2$ and $S=G$ is not a primitive subset of $G$. The only possible case is $p=2$, $f=2$ and $G=\Z_4$, in which $S$ must be a $(2,2,2,1)$-RDS in $\Z_4$ relative to $2\Z_4$, described in Proposition~\ref{prop-cycRDS}(2).

(2) Let $S$ be a primitive formally dual set of size $p$ in $G$, where $\exp(G_p)=p^e$ for some $e \ge 1$. Then $G$ contains a direct factor $H$ isomorphic to $\Z_{p^e}$. Let $\rho: G \rightarrow H$ be the natural projection. By Proposition~\ref{prop-projorder}(2), we have
$$
p^2-p=|S|^2-|S| \ge |S|^2-[\rho(S)\rho(S)^{(-1)}]_1 \ge p^{e-1}(p-1),
$$
which forces $e \in \{1,2\}$. If $e=2$, let $h$ be a generator of $H \cong \Z_{p^2}$. Note that the equalities hold in the above expression and $[\rho(S)\rho(S)^{(-1)}]_1=|S|$. This implies that $\rho(S)$ is indeed a subset of $H$. Since $S$ is a primitive subset, we can easily see that $\{\rho(S)\rho(S)^{(-1)}\}$ is not contained in the subgroup $\lan h^p \ran$. Thus, there exists some $y \in H \sm \lan h^p \ran$, so that $[\rho(S)\rho(S)^{(-1)}]_{y} \ge 1$. By Proposition~\ref{prop-projorder}(1), for each $i \in \Z_{p^2}^*$, we have $[\rho(S)\rho(S)^{(-1)}]_{y^i}=[\rho(S)\rho(S)^{(-1)}]_{y}$. Therefore, the $p^2-p$ nonidentity elements of $[\rho(S)\rho(S)^{(-1)}]$ are exactly the elements of $H \sm \lan h^p \ran$. So, $\rho(S)$ is a $(p,p,p,1)$-RDS in the cyclic group $H \cong Z_{p^2}$ relative to $\lan h^p \ran \cong \Zp$. Since $p$ is an odd prime, this contradicts Proposition~\ref{prop-cycRDS}.

Consequently, we must have $e=1$ and $G_p$ is elementary abelian. Suppose $S$ and $T$ form a primitive formally dual pair in $G$. By Proposition~\ref{prop-non}(2), $p \big| |T|$. By Proposition~\ref{prop-non}(1), $p^2 \big| |G|$. Hence, $G_p$ contains a subgroup isomorphic to $\Zp \times \Zp$.
\end{proof}

\begin{remark}\label{rem-cycopen}
Combining Proposition~\ref{prop-cyclic}(1)(3)(4), Proposition~\ref{prop-non}, Theorems~\ref{thm-order}, \ref{thm-weight}, \ref{thm-selfconj} and Proposition~\ref{prop-chardiv}, we derive that there are three open cases of primitive formally dual pairs in cyclic groups $\ZN$, where $N \le 1000$ and $|S| \le |T|$:
$$
(N,|S|,|T|) \in \{(600,10,60),(784,28,28),(900,30,30)\}.
$$
\end{remark}

\begin{remark}\label{rem-search}
For every abelian group of order up to $63$, we can either, up to equivalence, list all primitive formally dual sets in the group, or establish a nonexistence result, see Table~\ref{tab-table} in Appendix~\ref{app-A}. This classification is achieved by exploiting Proposition~\ref{prop-cyclic}(1)(3), Proposition~\ref{prop-non}(1)(2), Theorems~\ref{thm-order}, \ref{thm-weight}, \ref{thm-selfconj}, Proposition~\ref{prop-generator} and Corollary~\ref{cor-projorder}, together with a computer search. We note that the computer search for the next interesting case, the abelian groups of order $64$, is very time-consuming.
\end{remark}

\section{Concluding remarks}\label{sec6}

Formal duality was proposed in \cite{CKS}, which reflects a deep symmetry among energy-minimizing periodic configurations. Following the idea in \cite{CKRS}, formal duality between two periodic configurations can be translated into a combinatorial setting, namely, a formally dual pair in a finite abelian group. In this paper, we initiated a systematic investigation on formally dual pairs in finite abelian groups. For primitive formally dual pairs in finite abelian groups, we derived some nonexistence results. They are in favor of the main conjecture, which claims there are only two small examples of primitive formally dual pairs in cyclic groups. On the other hand, for primitive formally dual pairs in noncyclic groups, we gave three constructions living in elementary abelian groups or products of Galois rings. A reflection on our constructions motivated us to propose the concept of even sets, which provides more structural information about formally dual pairs. Moreover, the even set viewpoint led to a characterization of rank three primitive formally dual pairs, which sheds some new light on the main conjecture.

The numerical simulations in \cite{CKS} suggested that for each energy-minimizing periodic configuration, one can find another periodic configuration as its dual. This pair of periodic configurations generates a formally dual pair in a finite abelian group. The formally dual pair indicates how one can form the periodic configurations by taking the union of translations of a given lattice. In this sense, our new constructions of formally dual pairs produce potential schemes to form energy-minimizing periodic configurations.

We note that there are still a lot of problems about formally dual pairs which deserve further investigation and we mention a few below.
\begin{itemize}
\item[(1)] Construct new primitive formally dual pairs which are inequivalent to the known ones. Our constructions suggest there might be more in noncyclic groups.
\item[(2)] Note that for each known $(n,n,n,1)$-RDS in $G$ relative to $N$, we have $\wti{N} \cong N$. Therefore, for all known examples, the converse of Theorem~\ref{thm-rankthree} is also true. Is there any $(n,n,n,1)$-RDS in $G$ relative to $N$, such that $\wti{N} \not\cong N$? We expect that this is not possible.
\item[(3)] As noted in Remark~\ref{rem-semi}, there are many different skew Hadamard difference sets available, which give distinct primitive formally dual pairs. Are these formally dual pairs equivalent or not?
\item[(4)] Derive more nonexistence results about primitive formally dual pairs in cyclic groups. In particular, is there any way to tackle the three open cases in Remark~\ref{rem-cycopen}?
\item[(5)] According to Table~\ref{tab-table} in Appendix~\ref{app-A}, the nonexistence of primitive formally dual pairs in finite abelian groups remains widely open. Is there any theoretic approach to deal with the examples in Table~\ref{tab-table}, whose nonexistence is confirmed only by computer search?
\end{itemize}

\section*{Appendix}

\appendix

\section{Classification of primitive formally dual sets in abelian groups of order up to 63}\label{app-A}

In this appendix, we provide a table of all inequivalent examples and nonexistence results about primitive formally dual sets in abelian group of order up to $63$. If a group does contain a primitive formally dual set, we list all inequivalent examples obtained by exhaustive computer search and identify the construction approach which explains the example. If a group does not contain a primitive formally dual set, we either list the theoretical explanation or indicate that the nonexistence follows from exhaustive computer search. By Proposition~\ref{prop-non}(2), we only need to consider the groups whose order are not square-free. By Proposition~\ref{prop-non}(1), we only focus on subset $S$ of group $G$, such that $|S| \big| |G|$. By the symmetry between the two subsets in a primitive formally dual pair, we only list the primitive formally dual set $S$ in $G$, with $|S| \le \sqrt{|G|}$. Note that every rank three primitive formally dual set $S$ in $G$, satisfying $2 \le |S| \le 6$ and $|G| \le |S|^2$, is marked by $\bigstar$ in Table~\ref{tab-table}.


\ra{1.1}
\begin{longtable}{|c|c|c|c|}
\caption{All inequivalent examples and nonexistence of primitive formally dual sets in abelian groups of order up to $63$}
\label{tab-table}
\\ \hline
$(|G|,|S|)$ & $G$ & Primitive formally dual sets & Source \\ \hline
(1,1) & $\{1\}$ & \{1\} & Trivial \\ \hline
\multirow{2}{*}{(4,2)} & $\Z_4$ & (2,2,2,1)-RDS $\bigstar$ & Theorem~\ref{thm-RDS}(1) \\
                 & $\Z_2^2$ & none & Corollary~\ref{cor-projorder}(1) \\ \hline
(8,2) & arbitrary & none & Corollary~\ref{cor-projorder}(1) \\ \hline
\multirow{2}{*}{(9,3)} & $\Z_9$ & none & Corollary~\ref{cor-projorder}(2)\\
                       & $\Z_3^2$ & (3,3,3,1)-RDS $\bigstar$ & Theorem~\ref{thm-RDS}(2)\\ \hline
(12,2) & arbitrary & none & Corollary~\ref{cor-projorder}(1) \\ \hline
(12,3) & arbitrary & none & Proposition~\ref{prop-non}(2) \\ \hline
(16,2) & arbitrary & none & Corollary~\ref{cor-projorder}(1) \\ \hline
\multirow{6}{*}{(16,4)} & $\Z_{16}$ & none & Proposition~\ref{prop-cyclic}(1)\\
                        & $\Z_{2} \times \Z_8$ & none & computer search\\
                        & \multirow{2}{*}{$\Z_{4}^2$} & product of two $(2,2,2,1)$-RDSs &  Proposition~\ref{prop-prod} \\
                        &                             & $(4,4,4,1)$-RDS $\bigstar$ & Theorem~\ref{thm-RDS}(1) \\
                        & $\Z_{2}^2 \times \Z_4$ & none & computer search\\
                        & $\Z_2^4$ & none & Proposition~\ref{prop-generator}\\ \hline
(18,2) & arbitrary & none & Corollary~\ref{cor-projorder}(1) \\ \hline
\multirow{2}{*}{(18,3)} & $\Z_{2} \times \Z_9$ & none & Corollary~\ref{cor-projorder}(2) \\
                        & $\Z_{2} \times \Z_3^2$ & none & Theorem~\ref{thm-selfconj} \\\hline
(20,2) & arbitrary & none & Corollary~\ref{cor-projorder}(1) \\ \hline
(20,4) & arbitrary & none & Proposition~\ref{prop-non}(2) \\ \hline
(24,2) & arbitrary & none & Corollary~\ref{cor-projorder}(1) \\ \hline
(24,3) & arbitrary & none & Proposition~\ref{prop-non}(2) \\ \hline
\multirow{3}{*}{(24,4)} & $\Z_3 \times \Z_8$ & none & Theorem~\ref{thm-selfconj} \\
                        & $\Z_3 \times \Z_2 \times \Z_4$ & none & Theorem~\ref{thm-selfconj} \\
                        & $\Z_3 \times \Z_2^3$ & none & Theorem~\ref{thm-selfconj} \\ \hline
\multirow{2}{*}{(25,5)} & $\Z_{25}$ & none & Corollary~\ref{cor-projorder}(2)\\
                        & $\Z_{5}^2$ & (5,5,5,1)-RDS $\bigstar$ & Theorem~\ref{thm-RDS}(2)\\ \hline
\multirow{3}{*}{(27,3)} & $\Z_{27}$ & none & Corollary~\ref{cor-projorder}(2)\\
                        & $\Z_{3} \times \Z_9$ & none & Corollary~\ref{cor-projorder}(2)\\
                        & $\Z_{3}^3$ & none & Proposition~\ref{prop-generator}\\ \hline
(28,2) & arbitrary & none & Corollary~\ref{cor-projorder}(1) \\ \hline
(28,4) & arbitrary & none & Proposition~\ref{prop-non}(2) \\ \hline
(32,2) & arbitrary & none & Corollary~\ref{cor-projorder}(1) \\ \hline
\multirow{7}{*}{(32,4)} & $\Z_{32}$ & none & Proposition~\ref{prop-cyclic}(1)\\
                        & $\Z_{2} \times \Z_{16}$ & none & computer search\\
                        & $\Z_{4} \times \Z_{8}$ & none & computer search\\
                        & $\Z_{2}^2 \times \Z_{8}$ & none & computer search\\
                        & $\Z_{2} \times \Z_{4}^2$ & $\{(0,0,0), (0,0,1),(0,1,0), (1,1,1)\}$  & Example~\ref{exam-244}\\
                        & $\Z_{2}^3 \times \Z_4$ & none & Proposition~\ref{prop-generator}\\
                        & $\Z_2^5$ & none & Proposition~\ref{prop-generator}\\ \hline
(36,2) & arbitrary & none & Corollary~\ref{cor-projorder}(1) \\ \hline
\multirow{5}{*}{(36,3)} & $\Z_{4} \times \Z_{9}$ & none & Corollary~\ref{cor-projorder}(2)\\
                        & $\Z_{2}^2 \times \Z_{9}$ & none & Corollary~\ref{cor-projorder}(2)\\
                        & $\Z_{4} \times \Z_{3}^2$ & none & computer search \\
                        & $\Z_{2}^2 \times \Z_{3}^2$ & none & computer search \\ \hline
(36,4) & arbitrary & none & Proposition~\ref{prop-non}(2) \\ \hline
\multirow{5}{*}{(36,6)} & $\Z_{4} \times \Z_{9}$ & none & Proposition~\ref{prop-cyclic}(3)\\
                        & $\Z_{2}^2 \times \Z_{9}$ & none & computer search\\
                        & \multirow{2}{*}{$\Z_{4} \times \Z_{3}^2$} & product of a (2,2,2,1)-RDS & \multirow{2}{*}{Proposition~\ref{prop-prod}} \\
                        &                                           & and a (3,3,3,1)-RDS &  \\
                        & $\Z_{2}^2 \times \Z_{3}^2$ & none & computer search \\ \hline
(40,2) & arbitrary & none & Corollary~\ref{cor-projorder}(1) \\ \hline
\multirow{3}{*}{(40,4)} & $\Z_{8}\times \Z_5$ & none & Theorem~\ref{thm-selfconj}\\
                        & $\Z_{2} \times \Z_4 \times \Z_{5}$ & none & Theorem~\ref{thm-weight} \\
                        & $\Z_{2}^3 \times \Z_{5}$ & none & Theorem~\ref{thm-weight} \\  \hline
(40,5) & arbitrary & none & Proposition~\ref{prop-non}(2) \\ \hline
(44,2) & arbitrary & none & Corollary~\ref{cor-projorder}(1) \\ \hline
(44,4) & arbitrary & none & Proposition~\ref{prop-non}(2) \\ \hline
\multirow{2}{*}{(45,3)} & $\Z_{9}\times \Z_5$ & none & Corollary~\ref{cor-projorder}(2)\\
                        & $\Z_{3}^2 \times \Z_5$ & none & Theorem~\ref{thm-selfconj} \\  \hline
(45,5) & arbitrary & none & Proposition~\ref{prop-non}(2) \\ \hline
(48,2) & arbitrary & none & Corollary~\ref{cor-projorder}(1) \\ \hline
(48,3) & arbitrary & none & Proposition~\ref{prop-non}(2) \\ \hline
\multirow{5}{*}{(48,4)} & $\Z_{16}\times \Z_3$ & none & Proposition~\ref{prop-cyclic}(2)\\
                        & $\Z_{2}\times \Z_8 \times \Z_3$ & none & computer search  \\
                        & $\Z_{4}^2\times \Z_3$ & none & Theorem~\ref{thm-selfconj} \\
                        & $\Z_{2}^2\times \Z_4 \times \Z_3$ & none & Theorem~\ref{thm-selfconj} \\
                        & $\Z_{2}^4\times \Z_3$ & none & Theorem~\ref{thm-selfconj} \\ \hline
\multirow{5}{*}{(48,6)} & $\Z_{16}\times \Z_3$ & none & Proposition~\ref{prop-cyclic}(2)\\
                        & $\Z_{2}\times \Z_8 \times \Z_3$ & none & computer search \\
                        & $\Z_{4}^2\times \Z_3$ & none & Theorem~\ref{thm-selfconj} \\
                        & $\Z_{2}^2\times \Z_4 \times \Z_3$ & none & Theorem~\ref{thm-selfconj} \\
                        & $\Z_{2}^4\times \Z_3$ & none & Theorem~\ref{thm-selfconj} \\  \hline
\multirow{4}{*}{(49,7)} & $\Z_{49}$ & none & Proposition~\ref{prop-cyclic}(1)\\
                        & \multirow{3}{*}{$\Z_{7}\times \Z_7$} & (7,7,7,1)-RDS & Theorem~\ref{thm-RDS}(2)  \\
                        &                                      & formally dual set derived from & \multirow{2}{*}{Theorem~\ref{thm-SHDS}} \\
                        &                                      & skew Hadamard difference set &         \\  \hline
(50,2) & arbitrary & none & Corollary~\ref{cor-projorder}(1) \\ \hline
\multirow{2}{*}{(50,5)} & $\Z_{2} \times \Z_{25}$ & none & Proposition~\ref{prop-cyclic}(2)\\
                        & $\Z_{2} \times \Z_{5}^2$ & none & Theorem~\ref{thm-selfconj} \\  \hline
(52,2) & arbitrary & none & Corollary~\ref{cor-projorder}(1) \\ \hline
(52,4) & arbitrary & none & Proposition~\ref{prop-non}(2) \\ \hline
(54,2) & arbitrary & none & Corollary~\ref{cor-projorder}(1) \\ \hline
\multirow{3}{*}{(54,3)} & $\Z_{2} \times \Z_{27}$ & none & Proposition~\ref{prop-projorder}(2)\\
                        & $\Z_{2} \times \Z_3 \times \Z_{9}$ & none & Theorem~\ref{thm-selfconj} \\
                        & $\Z_{2} \times \Z_3^3$ & none & Proposition~\ref{prop-generator} \\ \hline
(54,6) & arbitrary & none & Theorem~\ref{thm-selfconj} \\ \hline
(56,2) & arbitrary & none & Corollary~\ref{cor-projorder}(1) \\ \hline
(56,4) & arbitrary & none & Theorem~\ref{thm-selfconj} \\ \hline
(56,7) & arbitrary & none & Proposition~\ref{prop-non}(2) \\ \hline
(60,2) & arbitrary & none & Corollary~\ref{cor-projorder}(1) \\ \hline
(60,3) & arbitrary & none & Proposition~\ref{prop-non}(2) \\ \hline
(60,4) & arbitrary & none & Proposition~\ref{prop-non}(2) \\ \hline
(60,5) & arbitrary & none & Proposition~\ref{prop-non}(2) \\ \hline
(60,6) & arbitrary & none & Theorem~\ref{thm-weight} \\ \hline
(63,3) & arbitrary & none & Theorem~\ref{thm-weight} \\ \hline
(63,7) & arbitrary & none & Proposition~\ref{prop-non}(2) \\ \hline
\end{longtable}

\section{Proof of Theorem~\ref{thm-rankthree}}\label{app-B}

In this appendix, we show that all rank three primitive formally dual sets $S$ in $G$, satisfying $3 \le |S| \le 6$ and $|G| \le |S|^2$, must be an $(n,n,n,1)$-RDS in $G$. This completes the proof of Theorem~\ref{thm-rankthree}. Let $H,K$ be two subgroups of $G$, then we use $\Span\{H,K\}$ to denote the subgroup of $G$ generated by $H$ and $K$.

\begin{proof}[Proof of Theorem~\ref{thm-rankthree} (Continued)]
Now we continue to finish the proof of Case III in Theorem~\ref{thm-rankthree}. Recall that $|S|=\la_1+\la_2+\la_3$, with $1 \le \la_i \le 2$, $1 \le i \le 3$. Define $c_1=|S_1|$, $c_2=|S_2|$ and $c_{12}=|S_{12}|$. Together with $S_1, S_2 \ne \es$, we have $c_1, c_2 \ge 1$ and $1+c_1+c_2+c_{12}=|S|$.

We split our discussion into two subcases.

Case III(a): $S_{12}=\es$. In this case, we have $S=1+S_1+S_2$ and
\begin{equation}\label{eqn-expSSinv}
SS^{(-1)}=1+S_1+S_1^{(-1)}+S_2+S_2^{(-1)}+S_1S_1^{(-1)}+S_2S_2^{(-1)}+S_1S_2^{(-1)}+S_2S_1^{(-1)}.
\end{equation}
Since $c_{12}=0$, we have $2 \le c_1+c_2 \le 5$. We claim that $H_1 \cap H_2=\{1\}$. Otherwise, for $w \in (H_1 \cap H_2) \sm \{1\}$, by \eqref{eqn-SIprop2}, we have $[SS^{(-1)}]_w=[S_{12}+S_{12}^{(-1)}+S_{12}S_{12}^{(-1)}]_w=0$, which contradicts \eqref{eqn-rankthree}. Therefore, by \eqref{eqn-SIprop2},
\begin{equation}\label{eqn-SIprop3}
\begin{aligned}
&\{S_1+S_1^{(-1)}\} \subset H_1 \sm H_3, \quad \{S_2+S_2^{(-1)}\} \subset H_2 \sm H_3,\\
&\{S_1S_1^{(-1)}\} \subset H_1 \cap H_3, \quad \{S_2S_2^{(-1)}\} \subset H_2 \cap H_3, \\
&\{S_1S_2^{(-1)}+S_2S_1^{(-1)}\} \subset H_3 \sm (H_1 \cup H_2).
\end{aligned}
\end{equation}
We split our discussion into the following three subcases.

Case III(a1): $H_1 \cap H_3 \ne \{1\}$ and $H_2 \cap H_3 \ne \{1\}$. Combining \eqref{eqn-rankthree} and \eqref{eqn-SIprop3}, we have
$$
\la_1+\la_3=\frac{c_1^2-c_1}{|H_1 \cap H_3|-1}, \la_2+\la_3=\frac{c_2^2-c_2}{|H_2 \cap H_3|-1}.
$$
Since $\la_1+\la_3, \la_2+\la_3 \ge 2$, we have $c_1,c_2 \ge 2$ and $|S|=1+c_1+c_2 \ge 5$. Since $c_1+c_2 \le 5$, we can assume without loss of generality that $c_1=2$, which implies $\la_1+\la_3=\frac{2}{|H_1 \cap H_3|-1}=2$. Noting that $1 \le \la_2 \le 2$ and $|S|=\la_1+\la_2+\la_3 \le 4$, we derive a contradiction on the size of $S$.

Case III(a2): Exactly one of $H_1 \cap H_3$ and $H_2 \cap H_3$ is $\{1\}$, we assume without loss of generality that $H_1 \cap H_3=\{1\}$. Since $\{S_1S_1^{(-1)}\} \subset H_1 \cap H_3=\{1\}$, we have $c_1=|S_1|=1$. By \eqref{eqn-expSSinv} and \eqref{eqn-SIprop3}, we have
$$
SS^{(-1)}=2+S_1+S_1^{(-1)}+S_2+S_2^{(-1)}+S_2S_2^{(-1)}+S_1S_2^{(-1)}+S_2S_1^{(-1)},
$$
where
\begin{align*}
&\{S_1+S_1^{(-1)}\} \subset H_1 \sm \{1\}, \quad \{S_2+S_2^{(-1)}\} \subset H_2 \sm H_3, \\
&\{S_2S_2^{(-1)}\} \subset H_2 \cap H_3, \quad \{S_1S_2^{(-1)}+S_2S_1^{(-1)}\} \subset H_3 \sm H_2.
\end{align*}
Together with \eqref{eqn-rankthree}, we have
\begin{equation}\label{eqn-la}
\la_1=\frac{2}{|H_1 \sm \{1\}|}, \la_2=\frac{2c_2}{|H_2 \sm H_3|}, \la_3=\frac{2c_2}{|H_3 \sm H_2|}, \la_2+\la_3=\frac{c_2^2-c_2}{|H_2 \cap H_3|-1}.
\end{equation}
Recall that $c_1=1$, which implies $2 \le c_2 \le 4$.

If $c_2=2$, then $|S|=1+c_1+c_2=\la_1+\la_2+\la_3=4$ and $|G| \le 16$. Using \eqref{eqn-la},
\begin{align*}
\la_2+\la_3=\frac{2}{|H_2 \cap H_3|-1} &\Rightarrow |H_2 \cap H_3|=2, \la_2=\la_3=1, \\
\la_2=\frac{4}{|H_2 \sm H_3|}=1 &\Rightarrow |H_2 \sm H_3|=4, |H_2|=6, \\
\la_3=\frac{4}{|H_3 \sm H_2|}=1 &\Rightarrow |H_3 \sm H_2|=4, |H_3|=6.
\end{align*}
Therefore, $|\Span\{ H_2,H_3\}|=\frac{|H_2||H_3|}{|H_2 \cap H_3|}=18$, which implies $18 \big| |G|$. We derive a contradiction on the size of $G$.

If $c_2=3$, then $|S|=1+c_1+c_2=5$ and $|G| \le 25$. Together with Corollary~\ref{cor-projorder}(2), we have $G \cong \Z_5 \times \Z_5$. Note that $1 \le \la_1 \le 2$. By \eqref{eqn-la}, we have $|H_1| \in \{2,3\}$, which leads to a contradiction since $H_1$ is a subgroup of $G$.

If $c_2=4$, then $|S|=1+c_1+c_2=\la_1+\la_2+\la_3=6$, $\la_1=\la_2=\la_3=2$ and $|G| \le 36$. Using \eqref{eqn-la},
\begin{align*}
\la_2+\la_3=\frac{12}{|H_2 \cap H_3|-1}=4 &\Rightarrow |H_2 \cap H_3|=4, \\
\la_2=\frac{8}{|H_2 \sm H_3|}=2 &\Rightarrow |H_2 \sm H_3|=4, |H_2|=8, \\
\la_3=\frac{8}{|H_3 \sm H_2|}=2 &\Rightarrow |H_3 \sm H_2|=4, |H_3|=8.
\end{align*}
Since $|\Span\{H_2, H_3\}|=\frac{|H_2||H_3|}{|H_2 \cap H_3|}=16$, $16 \big| |G|$. Together with $|S| \big| |G|$, we have $48 \big| |G|$. We derive a contradiction on the size of $G$.

Case III(a3): $H_1 \cap H_3=\{1\}$ and $H_2 \cap H_3=\{1\}$. Similar to Case III(a2), we can show that $c_1=c_2=1$. Therefore, we get $|S|=3$ and $|G|\le 9$. Together with Corollary~\ref{cor-projorder}(2), we have $G \cong \Z_3 \times \Z_3$. Note that the action of affine group $\AGL(2,3)$ on the elements of $\Z_3 \times \Z_3$ is $2$-transitive \cite[Exercise 2.8.13]{DM}. We can assume after a proper transformation of $S$, we get $S^{\pr}=\{(1,0),(0,1),(z_1,z_2)\}$, where
$$
(z_1,z_2) \in \{(0,0),(1,0),(0,1),(1,1),(1,2),(2,1),(2,2)\}.
$$
If $(z_1,z_2)=(2,2)$, then $S^{\pr}S^{\pr(-1)}=3((0,0)+(1,2)+(2,1))$. Therefore, $SS^{(-1)}$ equals three copies of an order three subgroup of $\Z_3 \times \Z_3$, which means $S$ is not primitive by Lemma~\ref{lem-priset}. In the remaining cases, it is easy to verify that $S^{\pr}$, and hence $S$, is a $(3,3,3,1)$-RDS in $G$.

Case III(b): $S_{12} \ne \es$. In this case, we have $S=1+S_1+S_2+S_{12}$. Since $c_1, c_2, c_{12} \ge 1$, we have $4 \le |S|=1+c_1+c_2+c_{12}=\la_1+\la_2+\la_3 \le 6$ and $c_{12} \in \{1,2,3\}$. Since $\{S_{12}+S_{12}^{(-1)}\} \subset (H_1 \cap H_2) \sm \{1\}$ and $\{S_1S_2^{(-1)}+S_2S_1^{(-1)}\} \subset H_3 \sm (H_1 \cup H_2)$, we have $H_1 \cap H_2 \ne \{1\}$ and $H_3 \sm (H_1 \cup H_2) \ne \es$. Therefore,
\begin{align*}
SS^{(-1)}=&1+S_1+S_1^{(-1)}+S_2+S_2^{(-1)}+S_{12}+S_{12}^{(-1)}+S_1S_1^{(-1)}+S_2S_2^{(-1)}+S_{12}S_{12}^{(-1)}\\
          &+S_1S_2^{(-1)}+S_2S_1^{(-1)}+S_1S_{12}^{(-1)}+S_{12}S_1^{(-1)}+S_2S_{12}^{(-1)}+S_{12}S_2^{(-1)}.
\end{align*}
We restate \eqref{eqn-SIprop2} as follows.
\begin{equation}\label{eqn-SIprop4}
\begin{aligned}
& \{S_1+S_1^{(-1)}\} \subset H_1 \sm (H_2 \cup H_3), \quad \{S_2+S_2^{(-1)}\} \subset H_2 \sm (H_1 \cup H_3)\\
&\{S_{12}+S_{12}^{(-1)}\} \subset (H_1 \cap H_2) \sm \{1\}, \quad \{S_1S_1^{(-1)}\} \subset H_1 \cap H_3, \quad \{S_2S_2^{(-1)}\} \subset H_2 \cap H_3 \\
&\{S_{12}S_{12}^{(-1)}\} \subset H_1 \cap H_2, \quad \{S_1S_2^{(-1)}+S_2S_1^{(-1)}\} \subset H_3 \sm (H_1 \cup H_2) \\
&\{S_1S_{12}^{(-1)}+S_{12}S_1^{(-1)}\} \subset H_1 \sm H_2=((H_1 \cap H_3) \sm \{1\}) \cup (H_1 \sm (H_2 \cup H_3)) \\
&\{S_2S_{12}^{(-1)}+S_{12}S_2^{(-1)}\} \subset H_2 \sm H_1=((H_2 \cap H_3) \sm \{1\}) \cup (H_2 \sm (H_1 \cup H_3))
\end{aligned}
\end{equation}
According to \eqref{eqn-SIprop4}, we make the following observations.

Noting that $[SS^{(-1)}] \cap ((H_1 \cap H_2) \sm \{1\})=[S_{12}+S_{12}^{(-1)}+S_{12}S_{12}^{(-1)}] \cap ((H_1 \cap H_2) \sm \{1\})$, we have
\begin{equation}\label{eqn-la1la2}
  \la_1+\la_2=\frac{c_{12}^2+c_{12}}{|(H_1 \cap H_2) \sm \{1\}|}.
\end{equation}

Noting that $[SS^{(-1)}] \cap (H_3 \sm (H_1 \cup H_2))=[S_1S_{2}^{(-1)}+S_{2}S_{1}^{(-1)}] \cap (H_3 \sm (H_1 \cup H_2))$, we have
\begin{equation}\label{eqn-la3}
  \la_3=\frac{2c_1c_2}{|H_3 \sm (H_1 \cup H_2)|}.
\end{equation}

If $c_1=1$, then $[SS^{(-1)}] \cap ((H_1 \cap H_3) \sm \{1\})=[S_1S_{12}^{(-1)}+S_{12}S_1^{(-1)}] \cap ((H_1 \cap H_3) \sm \{1\})$. Thus,
\begin{equation}\label{eqn-c1}
  2c_1c_{12} \ge (\la_1+\la_3)|(H_1 \cap H_3) \sm \{1\}|.
\end{equation}
Similarly, if $c_2=1$, we have
\begin{equation}\label{eqn-c2}
  2c_2c_{12} \ge (\la_2+\la_3)|(H_2 \cap H_3) \sm \{1\}|.
\end{equation}

If $H_1 \cap H_3 =\{1\}$, then $[SS^{(-1)}] \cap (H_1 \sm (H_2 \cup H_3))=[S_1+S_1^{(-1)}+S_1S_{12}^{(-1)}+S_{12}S_1^{(-1)}] \cap (H_1 \sm (H_2 \cup H_3))$. Thus,
\begin{equation}\label{eqn-la1}
  \la_1=\frac{2c_1c_{12}+2c_1}{|H_1 \sm (H_2 \cup H_3)|}.
\end{equation}
Similarly, if $H_2 \cap H_3 =\{1\}$, we have
\begin{equation}\label{eqn-la2}
  \la_2=\frac{2c_2c_{12}+2c_2}{|H_2 \sm (H_1 \cup H_3)|}.
\end{equation}
Recalling that $c_{12} \in \{1,2,3\}$, we split our discussion into three cases.

Case III(b1): $c_{12}=1$. By \eqref{eqn-la1la2},
$$
 \la_1+\la_2=\frac{2}{|(H_1 \cap H_2) \sm \{1\}|} \Rightarrow \la_1=\la_2=1, |H_1 \cap H_2|=2.
$$
Noting that $4 \le 1+c_1+c_2+c_{12}=|S|=\la_1+\la_2+\la_3 \le 4$, we have $c_1=c_2=1$, $\la_3=2$, $|S|=4$ and $|G| \le 16$. We claim $H_1 \cap H_3=\{1\}$. Otherwise, since $c_1=1$, by \eqref{eqn-c1},
$$
2 \ge 3|(H_1 \cap H_3) \sm \{1\}| \ge 3,
$$
which is a contradiction. Similarly, we can show that $H_2 \cap H_3=\{1\}$.

Since $H_1 \cap H_3=\{1\}$, $H_2 \cap H_3=\{1\}$ and $|H_1 \cap H_2|=2$, by \eqref{eqn-la1} and \eqref{eqn-la2},
\begin{align*}
\la_1=\frac{4}{|H_1 \sm (H_2 \cup H_3)|}=1 &\Rightarrow |H_1 \sm (H_2 \cup H_3)|=4, |H_1|=6, \\
\la_2=\frac{4}{|H_2 \sm (H_1 \cup H_3)|}=1 &\Rightarrow |H_2 \sm (H_1 \cup H_3)|=4, |H_2|=6.
\end{align*}
Therefore, $|\Span\{H_1,H_2\}|=18$ and $18 \big| |G|$. We derive a contradiction on the size of $G$.

Case III(b2): $c_{12}=2$. Since $|S|=1+c_1+c_2+c_{12}=\la_1+\la_2+\la_3 \ge 5$ and $1 \le \la_3 \le 2$. we have $\la_1+\la_2 \ge 3$. By \eqref{eqn-la1la2},
$$
\la_1+\la_2=\frac{6}{|(H_1 \cap H_2) \sm \{1\}|} \Rightarrow \la_1+\la_2=3, |H_1 \cap H_2|=3, \la_3=2, c_1=c_2=1, |S|=5, |G|\le 25.
$$
Together with Corollary~\ref{cor-projorder}(2), we have $G \cong \Z_5 \times \Z_5$, which contradicts $|H_1 \cap H_2|=3$.

Case III(b3): $c_{12}=3$. Since $|S|=\la_1+\la_2+\la_3=1+c_1+c_2+c_{12} \le 6$, we have $|S|=6$, $c_1=c_2=1$, $\la_1=\la_2=\la_3=2$ and $|G| \le 36$. By \eqref{eqn-la1la2} and \eqref{eqn-la3},
\begin{align*}
\la_1+\la_2=\frac{12}{|(H_1 \cap H_2) \sm \{1\}|}=4 &\Rightarrow |H_1 \cap H_2|=4, \\
\la_3=\frac{2}{|H_3 \sm (H_1 \cup H_2)|}=2 &\Rightarrow |H_3 \sm (H_1 \cup H_2)|=1.
\end{align*}
Since $c_1=c_2=1$, by \eqref{eqn-c1} and \eqref{eqn-c2},
\begin{equation}\label{eqn-inter}
\begin{aligned}
6 &\ge 4|(H_1 \cap H_3) \sm \{1\}|, \\
6 &\ge 4|(H_2 \cap H_3) \sm \{1\}|.
\end{aligned}
\end{equation}

If $H_1 \cap H_3 \ne \{1\}$ and $H_2 \cap H_3 \ne \{1\}$, by \eqref{eqn-inter}, we have $|H_1 \cap H_3|=|H_2 \cap H_3|=2$. Together with $|H_3 \sm (H_1 \cup H_2)|=1$, we have $|H_3|=4$. Since $H_1 \cap H_2 \cap H_3=\{1\}$, we have $|\Span\{H_1 \cap H_2, H_3\}|=16$. Thus, $|\Span\{H_1 \cap H_2, H_3\}| \big| |G|$ and $|S| \big| |G|$ implies $48 \big| |G|$. We derive a contradiction on the size of $G$.

If exactly one of $H_1 \cap H_3$ and $H_2 \cap H_3$ is $\{1\}$. We assume without loss of generality that $H_1 \cap H_3=\{1\}$ and $H_2 \cap H_3 \ne \{1\}$. By \eqref{eqn-inter}, we have $|H_2 \cap H_3|=2$. Together with $|H_3 \sm (H_1 \cup H_2)|=1$, we have $|H_3|=3$, which leads to a contradiction since $|H_2 \cap H_3| \nmid |H_3|$.

If $H_1 \cap H_3=H_2 \cap H_3=\{1\}$, by \eqref{eqn-la1}, \eqref{eqn-la2} and $|H_1 \cap H_2|=4$,
\begin{align*}
\la_1=\frac{8}{|H_1 \sm (H_2 \cup H_3)|}=2 &\Rightarrow |H_1 \sm (H_2 \cup H_3)|=4, |H_1|=8, \\
\la_2=\frac{8}{|H_2 \sm (H_1 \cup H_3)|}=2 &\Rightarrow |H_2 \sm (H_1 \cup H_3)|=4, |H_2|=8.
\end{align*}
Therefore, $|\Span\{H_1, H_2\}|=16$. Thus, $|\Span\{H_1, H_2\}| \big| |G|$ and $|S| \big| |G|$ implies $48 \big| |G|$. We derive a contradiction on the size of $G$.
\end{proof}

\section*{Acknowledgement}

Shuxing Li is supported by the Alexander von Humboldt Foundation. Robert Sch\"{u}ler is supported by DFG grant SCHU 1503/7. The authors wish to express their gratitude to the anonymous reviewers for their careful reading and very helpful suggestions.

\end{document}